\documentclass[hidelinks,onefignum,onetabnum]{siamart251216}
\usepackage{amssymb,mathrsfs}
\usepackage{mathtools}
\usepackage{algpseudocode}
\usepackage{cite}
\newsiamremark{remark}{Remark}
\newcommand{\R}{\mathbb{R}}
\newcommand{\norm}[1]{\left\lVert #1\right\rVert}
\newcommand{\spec}{\operatorname{spec}}
\newcommand{\diag}{\operatorname{diag}}
\newcommand{\Lip}{\operatorname{Lip}}
\newcommand{\Id}{\mathbb{I}}
\newcommand{\prox}{\operatorname{prox}}
\newcommand{\dist}{\operatorname{dist}}
\newcommand{\classF}{\mathscr{F}}
\newcommand{\DAI}{\mathrm{DAI}}
\newcommand{\M}{\mathcal{M}}
\newcommand{\I}{\mathcal{I}}
\newcommand{\fc}{\mathfrak{c}}
\newcommand{\fs}{\mathfrak{s}}
\DeclarePairedDelimiter{\abs}{\lvert}{\rvert}

\DeclareRobustCommand{\ADA}{%
  \textup{%
    A%
    \kern-.07em%
    \lower.21ex\hbox{D}%
    \kern-.09em%
    A%
  }%
}

\headers{Descent--Ascent Structure of Uniform Nondegeneracy}{Hua Su, Lei Zhang, and Jin Zhao}
\hypersetup{
  pdftitle={Descent--Ascent Structure of Uniform Nondegeneracy},
  pdfauthor={Hua Su, Lei Zhang, and Jin Zhao}
}

\title{\small Descent--Ascent Structure of Uniform Nondegeneracy
\thanks{Submitted to the editors DATE.
\funding{L.Z. was supported by the National Natural Science Foundation of China (No. 12225102, T2321001, and 12288101). J.Z. was supported in part by the Beijing Natural Science Foundation (Grant No. JR25003), the National Natural Science Foundation of China (Grant No.12671504), and the Beijing Outstanding Young Scientist Program (No.JWZQ20240101027).}}
}
\author{
Hua Su\thanks{\raggedright Beijing International Center for Mathematical Research, Peking University, Beijing, China. E-mail: \email{suhua@pku.edu.cn}.}
\and
Lei Zhang\thanks{\raggedright Corresponding author. School of Mathematical Sciences, Beijing International Center for Mathematical Research, Center for Quantitative Biology, Center for Machine Learning Research, Institute for Artificial Intelligence, Peking University, Beijing, China. E-mail: \email{zhangl@math.pku.edu.cn}.}
\and
Jin Zhao\thanks{\raggedright Academy for Multidisciplinary Studies, Capital Normal University, Beijing, China. E-mail: \email{zjin@cnu.edu.cn}.}
}

\begin{document}
\maketitle

\begin{abstract}
Classical models such as smooth strongly convex minimization and strongly convex--strongly concave optimization combine two types of curvature information: curvature remains uniformly separated from zero, and its positive and negative orientations are prescribed in advance. Uniform nondegeneracy disentangles these roles, retaining the separation from zero while allowing the curvature orientation to vary across the domain.
We identify a family of descent--ascent inequalities (DAI) that characterizes uniform nondegeneracy and naturally gives rise to Alternating Descent Ascent (\ADA{}), a simple explicit method that alternates full-gradient descent and ascent. \ADA{} converges globally and linearly to the unique stationary point from arbitrary initialization, with dimension-free first-order complexity. The framework recovers the smooth strongly convex setting as a limit and encompasses the strongly convex--strongly concave and linearly constrained settings, highlighting uniform nondegeneracy as a common first-order structure across various classical regimes.
\end{abstract}

\begin{keywords}
uniform nondegeneracy, descent--ascent inequalities, alternating descent ascent, first-order methods, global convergence
\end{keywords}

\begin{MSCcodes}
90C26, 90C60, 65K05
\end{MSCcodes}

\section{Introduction}

Much of classical first-order optimization is built around models with uniform curvature control: strong convexity keeps curvature uniformly positive~\cite{nesterov2018lectures}, strong concavity keeps it uniformly negative, while strongly convex--strongly concave (SCSC) structure prescribes convex and concave orientations through a fixed splitting of the variables~\cite{rockafellar1976monotone}. These models intertwine two distinct forms of structure: quantitative separation of curvature from zero and prescribed information about how its positive and negative directions are organized. In our preceding work~\cite{su2026uniform}, we introduced uniform nondegeneracy to disentangle these roles, requiring curvature to remain quantitatively separated from zero while allowing its orientation to vary across the domain.

For $E\in C^2(\R^d)$ with gradient $g:=\nabla E$ and Hessian $\nabla^2 E$, uniform nondegeneracy with parameters $L_+\ge\mu_+>0$ and $L_-\ge\mu_->0$ amounts to the spectral condition
\begin{equation}\label{eq:spectrum}
\spec\left(\nabla^2 E(x)\right)\subseteq\mathcal{S}:=[-L_-,-\mu_-]\cup[\mu_+,L_+],
\qquad \forall x\in\R^d,
\end{equation}
which extends to $C^1$ objectives through the equivalent pairwise secant formulation:
\begin{subequations}\label{eq:secants}
\begin{align}
\norm{\Delta g-c_\mu\Delta x}&\ge\bar\mu\norm{\Delta x},\label{eq:secant-inner}\\
\norm{\Delta g-c_L\Delta x}&\le\bar L\norm{\Delta x},\label{eq:secant-outer}
\end{align}
\end{subequations}
for every $x_1,x_2\in\R^d$, with $\Delta x:=x_1-x_2$, $\Delta g:=g(x_1)-g(x_2)$, and
\begin{equation*}
c_\mu:=\frac{\mu_+-\mu_-}{2},\quad
\bar\mu:=\frac{\mu_++\mu_-}{2},\quad
c_L:=\frac{L_+-L_-}{2},\quad
\bar L:=\frac{L_++L_-}{2}.
\end{equation*}

We denote the resulting $C^1$ function class by $\classF_{\mu_\pm,L_\pm}(\R^d)$. Within this geometry, prescribed curvature orientations recover the familiar smooth strongly convex, strongly concave, and SCSC settings, while the full class also admits genuinely indefinite objectives with varying curvature orientations and no fixed convex--concave splitting.
Moreover, uniform nondegeneracy by itself already determines strong global structure~\cite[Sections~2--3]{su2026uniform}: the gradient map is a global bi-Lipschitz homeomorphism, and hence $E$ has a unique stationary point $x^\star$, for which we established global first-order convergence from arbitrary initialization together with dimension-free complexity guarantees. Taken together, these results establish uniform nondegeneracy as an independent organizing principle for global first-order optimization.

\subsection{Main Results}

We reveal a descent--ascent structure underlying uniform nondegeneracy, with a particularly simple explicit algorithmic realization:

\begin{algorithm}[H]
\caption{Alternating Descent Ascent (\ADA{})}\label{alg:ada}
\begin{algorithmic}[1]
\State Initialize $\sigma\in\{\downarrow,\uparrow\}$
\While{$\norm{g(x)}>\varepsilon$}
\If{$\sigma=\;\downarrow$}
\State $x:=x-\tau_+g(x)$ \Comment{Gradient descent with step size $\tau_+>0$}
\State $\sigma:=\;\uparrow$
\Else
\State $x:=x+\tau_-g(x)$ \Comment{Gradient ascent with step size $\tau_->0$}
\State $\sigma:=\;\downarrow$
\EndIf
\EndWhile
\end{algorithmic}
\end{algorithm}

Underlying this scheme is a deeper structure of uniform nondegeneracy, formalized through a family of descent--ascent inequalities (DAI). Using the standard Bregman difference $D_f(u,v):=f(u)-f(v)-\langle\nabla f(v),u-v\rangle$, we define DAI as follows.

\begin{definition}[Descent--ascent inequality]
We say that $E\in C^1(\R^d)$ satisfies the descent--ascent inequality (DAI) with weights $\rho_\pm\ge0$ and scales $\Lambda_\pm>0$, abbreviated as $\DAI(\rho_\pm;\Lambda_\pm)$, if, for every $x,y,z\in\R^d$,
\begin{equation}\label{eq:dai}
\rho_+\Phi_+(x,y)+\rho_-\Phi_-(x,z)
\ge \left\langle\nabla_x\Phi_+(x,y),\nabla_x\Phi_-(x,z)\right\rangle,
\end{equation}
where
\begin{equation*}
\Phi_+(x,y):=\frac12\norm{x-y}^2-\frac{1}{\Lambda_+}D_E(x,y),\quad
\Phi_-(x,z):=\frac12\norm{x-z}^2+\frac{1}{\Lambda_-}D_E(x,z).
\end{equation*}
\end{definition}

The terminology comes from the following algebraically equivalent form: writing the descent and ascent maps in \Cref{alg:ada} at step sizes $\tau_\pm=1/\Lambda_\pm$ as
\begin{equation*}
x^\downarrow:=x-\frac{g(x)}{\Lambda_+},\qquad x^\uparrow:=x+\frac{g(x)}{\Lambda_-},
\end{equation*}
we have
\begin{equation}\label{eq:phi-grad}
\nabla_x\Phi_+(x,y)=x^\downarrow-y^\downarrow,\qquad
\nabla_x\Phi_-(x,z)=x^\uparrow-z^\uparrow,
\end{equation}
and thus \eqref{eq:dai} is equivalently written as
\begin{equation*}
\rho_+\Phi_+(x,y)+\rho_-\Phi_-(x,z)\ge\langle x^\downarrow-y^\downarrow,\;x^\uparrow-z^\uparrow\rangle.
\end{equation*}

Our first main result states that uniform nondegeneracy implies DAI under explicit conditions on the scales and weights.

\begin{theorem}\label{thm:dai}
Every $E\in\classF_{\mu_\pm,L_\pm}(\R^d)$ satisfies $\DAI(\rho_\pm;\Lambda_\pm)$ whenever
\begin{equation}\label{eq:dai-conditions}
\begin{gathered}
\Lambda_\pm\ge L_\pm,\qquad
\rho_\pm\ge\frac{\bar\Lambda-\delta_\Lambda}{\Lambda_\mp},\qquad
\rho_+\rho_-\ge q_\Lambda,
\end{gathered}
\end{equation}
where
\begin{equation*}
c_\Lambda:=\frac{\Lambda_+-\Lambda_-}{2},\qquad
\bar\Lambda:=\frac{\Lambda_++\Lambda_-}{2},\qquad
\delta_\Lambda:=\dist(c_\Lambda,\mathcal{S}),
\end{equation*}
and
\begin{equation}\label{eq:q}
q_\Lambda:=\max_{\lambda\in\mathcal{S}}
\left(1-\frac{\lambda}{\Lambda_+}\right)\left(1+\frac{\lambda}{\Lambda_-}\right)=\frac{\bar\Lambda^2-\delta_\Lambda^2}{\Lambda_+\Lambda_-}.
\end{equation}
\end{theorem}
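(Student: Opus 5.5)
The inequality \eqref{eq:dai} involves three independent points $x,y,z$ and only integrated quantities. I would therefore first reduce it to a pointwise statement about matrices by writing everything through integrated Hessians, and then close the argument by completing a square.

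\textbf{Step 1: reduction to the $C^2$ case.} By mollification and the secant characterization \eqref{eq:secants}, I will assume $E\in C^2$ with $\spec(\nabla^2E)\subseteq\mathcal S$. Mollification preserves both secant inequalities, since they are convex-type conditions on pairwise differences; this needs a short check. The inequality \eqref{eq:dai} then passes to the limit under locally uniform $C^1$ convergence.

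\textbf{Step 2: integral representations.} Set $u=x-y$ and $v=x-z$. Then
\begin{equation*}
D_E(x,y)=\int_0^1(1-t)\langle\nabla^2E(y+tu)u,u\rangle\,dt,
\end{equation*}
so
\begin{equation*}
\Phi_+(x,y)=\int_0^1(1-t)\langle P_t u,u\rangle\,dt,\qquad P_t:=I-\nabla^2E(y+tu)/\Lambda_+ .
\end{equation*}
Because $\Lambda_+\ge L_+$, each $P_t$ is symmetric with spectrum in $\{1-\lambda/\Lambda_+:\lambda\in\mathcal S\}\subseteq[0,\infty)$.

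Similarly, $\nabla_x\Phi_+=x^\downarrow-y^\downarrow=\int_0^1P_t\,dt\,u=:\bar P u$. The analogous formulas hold with $Q_s:=I+\nabla^2E(z+sv)/\Lambda_-\succeq0$ for the ascent side.

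The key point is that $\Phi_+$ and $\nabla_x\Phi_+$ are quadratic forms in $u$ built from the same family $P_t$. Jensen-type bounds then relate them. In one dimension along the segment, $\norm{\bar Pu}^2$ is controlled by $\sup\|P_t\|\cdot\langle \bar P u,u\rangle$. The $(1-t)$ weighting, however, means $\Phi_+$ is not $\tfrac12\langle\bar Pu,u\rangle$. I expect the correct route is the cocoercivity-type estimate $\Phi_+(x,y)\ge \frac{1}{2a_+}\norm{x^\downarrow-y^\downarrow}^2$, where the constant $a_+$ is extracted from \eqref{eq:dai-conditions}. This is the analogue of $f(x)-f(y)-\langle\nabla f(y),x-y\rangle\ge\frac1{2L}\norm{\nabla f(x)-\nabla f(y)}^2$ for convex $L$-smooth $f$. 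It holds because $\Phi_+(\cdot,y)$ is convex with Hessian $P$ whose spectrum lies in $[0,\max_{\mathcal S}(1-\lambda/\Lambda_+)]$.

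\textbf{Step 3: the coupling.} With the estimates $\Phi_+\ge\frac{1}{2a_+}\norm{p}^2$ and $\Phi_-\ge\frac{1}{2a_-}\norm{q}^2$ in hand, where $p=x^\downarrow-y^\downarrow$ and $q=x^\uparrow-z^\uparrow$, Young's inequality gives \eqref{eq:dai} as soon as $\frac{\rho_+\rho_-}{a_+a_-}\ge1$. Crude individual bounds for $a_\pm$ would give $\max(1-\lambda/\Lambda_+)\cdot\max(1+\lambda/\Lambda_-)$. That product is too large compared with $q_\Lambda$, which is a maximum of the product of the two factors at a \emph{common} $\lambda$.

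This is the main obstacle. The points $y,z$ differ, so the Hessians in $P$ and $Q$ differ, and a common spectral parameter is not available directly. I would try to exploit the identity
\begin{equation*}
\frac{P}{\Lambda_-}+\frac{Q}{\Lambda_+}\text{-type combinations}:\qquad x^\uparrow-x^\downarrow=\Bigl(\tfrac1{\Lambda_+}+\tfrac1{\Lambda_-}\Bigr)g(x),
\end{equation*}
which links $p$ and $q$ through $g$ at $x$. I would then use the convex function $\Phi_+(\cdot,y)$ with curvature spectrum $\{1-\lambda/\Lambda_+\}$ together with $\Phi_-(\cdot,z)$ with spectrum $\{1+\lambda/\Lambda_-\}$. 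Both are functions of the single variable $x$, and their Hessians are simultaneously diagonalizable at each point, evaluated at the same $\lambda\in\mathcal S$. This suggests proving the stronger statement
\begin{equation*}
\rho_+\Phi_+(x,y)+\rho_-\Phi_-(x,z)-\langle\nabla\Phi_+(x,y),\nabla\Phi_-(x,z)\rangle\ge0
\end{equation*}
as a minimization over $x$. I would show that the left side, viewed as a function of $x$ for fixed $y,z$, has stationary points only where it is nonnegative, using a Hessian matrix inequality at a common point. Concretely, I would establish the $2\times2$ block inequality
\begin{equation*}
\begin{pmatrix}\rho_+(1-\lambda/\Lambda_+)&\cdot\\\cdot&\rho_-(1+\lambda/\Lambda_-)\end{pmatrix}\succeq(1-\lambda/\Lambda_+)(1+\lambda/\Lambda_-)\,\text{(coupling)}
\end{equation*}
for every $\lambda\in\mathcal S$. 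Expanding the determinant condition yields exactly $\rho_+\rho_-\ge(1-\lambda/\Lambda_+)(1+\lambda/\Lambda_-)$ together with the diagonal conditions $\rho_\pm\ge(\bar\Lambda-\delta_\Lambda)/\Lambda_\mp$. Maximizing over $\mathcal S$ gives $q_\Lambda$; the closed form in \eqref{eq:q} follows because the product is a concave quadratic in $\lambda$ with vertex at $c_\Lambda$, so its maximum over $\mathcal S$ is attained at the point of $\mathcal S$ nearest to $c_\Lambda$.
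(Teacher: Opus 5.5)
Your Step~2 is sound: the cocoercivity bound is essentially \Cref{lem:single-step}. With Young's inequality it gives a valid DAI, but only under the much weaker condition $\rho_+\rho_-\ge(1+L_-/\Lambda_+)(1+L_+/\Lambda_-)$. The gap is Step~3, which carries the whole content of the theorem. You name the obstacle but do not overcome it, and the mechanism you sketch cannot overcome it.

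For quadratic $E$, \eqref{eq:dai} splits over eigenvalues into the natural form of your $2\times2$ condition, $\left(\begin{smallmatrix}\rho_+a&-ab\\-ab&\rho_-b\end{smallmatrix}\right)\succeq0$ with $a=1-\lambda/\Lambda_+$ and $b=1+\lambda/\Lambda_-$. This yields only $\rho_+\rho_-\ge q_\Lambda$. The diagonal entries impose nothing beyond $\rho_\pm\ge0$, so the bounds $\rho_\pm\ge(\bar\Lambda-\delta_\Lambda)/\Lambda_\mp$ do not come out of it.

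Yet those bounds are genuinely needed. Take $d=1$, $\mu_\pm=L_-=\Lambda_-=1$ and $L_+=\Lambda_+=3$. Then $\delta_\Lambda=0$, $q_\Lambda=4/3$, and the theorem demands $\rho_+\ge2$ and $\rho_-\ge2/3$. Let $E''=1$ on $(-\infty,1-\epsilon)$ and $E''=3$ on $[1-\epsilon,\infty)$. Difference quotients of $E'$ lie in $[1,3]\subseteq\mathcal S$, so $E\in\classF_{\mu_\pm,L_\pm}(\R)$.

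At $x=1$, $y=0$, $z=1-\eta$ with $0<\eta\le\epsilon$, one finds
$\Phi_+=(1-\epsilon^2)/3$, $\nabla_x\Phi_+=2(1-\epsilon)/3$, $\Phi_-=2\eta^2$ and $\nabla_x\Phi_-=4\eta$.
Take $\eta=2(1-\epsilon)/(3\rho_-)$, which is admissible once $\rho_-\ge2(1-\epsilon)/(3\epsilon)$. Then \eqref{eq:dai} holds at these points only if $\rho_+\rho_-\ge8(1-\epsilon)/(3(1+\epsilon))$, which exceeds $4/3$ for $\epsilon<1/3$. So every pair with $\rho_+\rho_-=q_\Lambda$ and $\rho_-$ that large violates DAI.

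This is exactly your mixed product. $\Phi_+$ averages the curvature over $[y,x]$ (here $\lambda=1$), while for large $\rho_-$ the optimal $z$ collapses onto $x$ and $\Phi_-$ sees only $\lambda=3$. Any argument at a common $\lambda$ would therefore prove a false statement.

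Your plan to study stationary points of $x\mapsto\text{LHS}-\text{RHS}$ fails for the same reason. Its Hessian involves $\nabla^3E$. At a critical point, the values $\Phi_\pm$ are integrals along $[y,x]$ and $[z,x]$ and are not determined by $\nabla^2E(x)$, so you are led back to the separate constants $a_\pm$.

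What is missing is a nonlocal coupling, which the paper obtains through two-sided Moreau envelopes. It applies \Cref{prop:proximal} to the relative energy $D_E(\cdot,x)$, whose stationary point is $x$. This rewrites $\Phi_+(x,y)$ and $\Phi_-(x,z)$ as $\frac12\norm{\xi_\pm-x}^2$ plus its envelopes at $s=1/\Lambda_+$ and $s=-1/\Lambda_-$, evaluated at $\xi_+$ and $\xi_-$. The DAI then becomes the two-anchor inequality of \Cref{lem:two-anchor}.

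There, testing the $s=1/\Lambda_+$ envelope at the rescaled point $x^\star+(\xi_--x^\star)/\rho_+$ eliminates the cross term. The remaining one-sided bound on the $s=-1/\Lambda_-$ envelope comes from two ingredients: the shift by $\frac{c_\Lambda}{2}\norm{\cdot-x^\star}^2$, and the envelope comparison of \Cref{prop:envelope-comparison,cor:rescaled-anchor}. That comparison is proved by a monotonicity-in-$s$ argument using the refined inner secant \eqref{eq:secant-refined}. The individual weight bounds enter exactly as $\abs{\Lambda_-\rho_+-\bar\Lambda}\le\delta_\Lambda$, the admissible range of the rescaling parameter $\theta$.

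Separately, your Step~1 justification is wrong. The inner secant \eqref{eq:secant-inner} bounds the norm of an average from below, and symmetric matrices with spectrum in $\mathcal S$ do not form a convex set, so mollification need not preserve the class. Use instead the a.e.\ symmetric Jacobian of the Lipschitz map $g$, whose spectrum lies in $\mathcal S$ wherever it exists.
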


Conversely, DAI also characterizes uniform nondegeneracy.

\begin{theorem}\label{thm:characterization}
For $L_\pm\ge\mu_\pm>0$, if $E\in C^1(\R^d)$ satisfies
\begin{equation}\label{eq:dai-class}
\DAI\left(\frac{\bar L}{L_\mp};L_\pm\right)
\qquad\text{and}\qquad
\DAI\left(\frac{\sqrt{w(w+\mu_++\mu_-)}}{w+\mu_\mp};w+\mu_\pm\right),
\end{equation}
where $w:=\max\{L_+-\mu_+,L_--\mu_-\}$, then $E\in\classF_{\mu_\pm,L_\pm}(\R^d)$.
\end{theorem}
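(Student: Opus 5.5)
The plan is to extract the two secant inequalities \eqref{eq:secant-inner} and \eqref{eq:secant-outer} from \eqref{eq:dai-class} by specializing the free points $x,y,z$ in \eqref{eq:dai}. The first family, $\DAI(\bar L/L_\mp;L_\pm)$, should give the outer (smoothness-type) condition. The second family, with the scales $\Lambda_\pm=w+\mu_\pm$, should give the inner (nondegeneracy) condition. Throughout I will use the gradient identities \eqref{eq:phi-grad}, together with the symmetrization $D_E(x_1,x_2)+D_E(x_2,x_1)=\langle\Delta g,\Delta x\rangle$.

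\emph{Outer bound.} In \eqref{eq:dai} I first take $x=y$. Then $\Phi_+=0$ and $\nabla_x\Phi_+=0$, so the inequality reduces to $\rho_-\Phi_-(x,z)\ge0$, that is, $D_E(x,z)\ge-\tfrac{L_-}{2}\norm{x-z}^2$. Taking $x=z$ instead gives $D_E(x,y)\le\tfrac{L_+}{2}\norm{x-y}^2$. Together these say that $h:=E+\tfrac{L_-}{2}\norm{\cdot}^2$ is convex and $(L_++L_-)$-smooth in Bregman form. The standard cocoercivity consequence for $C^1$ convex smooth functions is
\[
\langle\nabla h(x_1)-\nabla h(x_2),\Delta x\rangle\ge\frac{1}{2\bar L}\norm{\nabla h(x_1)-\nabla h(x_2)}^2 .
\]
Rewriting $\nabla h=g+L_-\,x$ and completing the square turns this into $\norm{\Delta g-c_L\Delta x}\le\bar L\norm{\Delta x}$, which is \eqref{eq:secant-outer}. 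In fact this step only uses the scales $\Lambda_\pm=L_\pm$ and the positivity of the weights.

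\emph{Inner bound.} Next I set $x=x_1$ and $y=z=x_2$ in the second DAI, then swap the roles of $x_1$ and $x_2$, and add the two inequalities. All Bregman terms collapse to $\langle\Delta g,\Delta x\rangle$. Multiplying by $\Lambda_+\Lambda_-/2$ then yields a quadratic inequality of the form
\[
\norm{\Delta g}^2-\Bigl(\Lambda_+-\Lambda_-+\tfrac{\rho_+\Lambda_--\rho_-\Lambda_+}{2}\Bigr)\langle\Delta g,\Delta x\rangle+\Lambda_+\Lambda_-\Bigl(\tfrac{\rho_++\rho_-}{2}-1\Bigr)\norm{\Delta x}^2\ge0 .
\]
For the chosen weights we have $\rho_+\Lambda_-=\rho_-\Lambda_+=\sqrt{w(w+\mu_++\mu_-)}$, so the linear coefficient is exactly $2c_\mu$. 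Completing the square should then produce $\norm{\Delta g-c_\mu\Delta x}\ge r\norm{\Delta x}$ with $r^2$ an explicit function of $\bar\Lambda=w+\bar\mu$ and $s:=\sqrt{\bar\Lambda^2-\bar\mu^2}$.

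\emph{Main obstacle.} The naive symmetrized radius $\bar\Lambda(\bar\Lambda-s)$ appears to fall short of $\bar\mu^2$. So the symmetric choice alone is probably insufficient, and this is where I expect the real work. I would first keep $x$, $y$, $z$ distinct and choose them adaptively, for example taking $y,z$ to be preimages under the descent and ascent maps, so that the right-hand side of \eqref{eq:dai} becomes $\langle x^\downarrow-y^\downarrow,x^\uparrow-z^\uparrow\rangle$ with controlled vectors. I would also feed in the outer bound already obtained, which restricts $\Delta g$ to the disk $\norm{\Delta g-c_L\Delta x}\le\bar L\norm{\Delta x}$; the choice $w=\max\{L_+-\mu_+,L_--\mu_-\}$ suggests that this containment is exactly what upgrades the radius to $\bar\mu$. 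If that route stalls, I would instead reduce to the one-dimensional picture. In that picture $\Delta g=\lambda\Delta x+\nu$ with $\nu\perp\Delta x$, and the DAI at the scales $w+\mu_\pm$ forbids $\lambda\in(-\mu_-,\mu_+)$, because the factor $(1-\lambda/\Lambda_+)(1+\lambda/\Lambda_-)$ then exceeds $\rho_+\rho_-$, matching the expression for $q_\Lambda$ in \eqref{eq:q}. I would then derive the secant form by a first-order optimality argument over auxiliary points.
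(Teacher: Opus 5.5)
Your outer-bound step is correct and is essentially the paper's argument; the paper derives the same cocoercivity inequality by eliminating $E(\xi)$ between the two one-sided quadratic bounds and minimizing over $\xi$. Your symmetrized computation for the second DAI is also correct, and so is your diagnosis. With $y=z$ the Bregman terms drop out and you get only $\norm{\Delta g-c_\mu\Delta x}^2\ge\bar\Lambda^2(1-\rho)\norm{\Delta x}^2$, where $\rho:=s/\bar\Lambda$. By contrast, \eqref{eq:secant-inner} needs $\bar\Lambda^2(1-\rho^2)=\bar\mu^2$.

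This inner secant is the real content of the theorem, and the proposal stops exactly there. None of the remedies you list is carried out, and none closes the gap as stated.
\begin{itemize}
\item The outer disk from the first DAI is not the missing ingredient. The inner bound follows from the second DAI alone, whatever the value of $w$. The specific $w$ only ensures $\Lambda_\pm=w+\mu_\pm\ge L_\pm$, so that by \Cref{thm:dai} every member of the class actually satisfies this DAI.
\item The one-dimensional picture $\Delta g=\lambda\Delta x+\nu$ decides DAI only for quadratics. For a general $C^1$ function, the terms $D_E(x,y)$ and $D_E(x,z)$ involving a third point are not determined by $(\Delta x,\Delta g)$ at a single pair. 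So there is no ``$\lambda$'' to insert into $(1-\lambda/\Lambda_+)(1+\lambda/\Lambda_-)\le\rho_+\rho_-$.
\item Choosing $y,z$ through the descent and ascent maps controls the right-hand side of \eqref{eq:dai}. But it leaves those Bregman terms bounded only by the two-sided quadratic estimates, which is precisely what produced the lossy bound.
\end{itemize}

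What is missing is a device that makes the function approximately quadratic along a segment, so that the one-dimensional computation becomes legitimate. The paper does this as follows.
\begin{itemize}
\item Center $f:=E-\frac{c_\mu}{2}\norm{\cdot}^2$, which satisfies the symmetric $\DAI(\rho;\bar\Lambda)$.
\item Your crude bound plus the $\bar\Lambda$-Lipschitz bound make $\nabla f$ bi-Lipschitz. So one may pass to $f^\sharp$ with $\nabla f^\sharp=(\nabla f)^{-1}$, and the goal becomes $K:=\Lip(\nabla f^\sharp)\le1/(\bar\Lambda\sqrt{1-\rho^2})$.
\item $K$ in fact equals $\sup\abs{\langle\nabla f^\sharp(p_1)-\nabla f^\sharp(p_2),p_1-p_2\rangle}/\norm{p_1-p_2}^2$, by the same elimination argument applied to $f^\sharp$. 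Hence a near-maximizing pair forces $\nabla f^\sharp(p_2+\alpha\Delta p)-\nabla f^\sharp(p_2)=\sigma K\alpha\Delta p+O(\sqrt\varepsilon)\norm{\Delta p}$ uniformly in $\alpha\in[0,1]$. All Bregman terms along the segment are then nearly those of a quadratic.
\item Apply the DAI of $f$ at the three distinct points $x=\nabla f^\sharp(p_2)$, $y=\nabla f^\sharp(p_2+\beta_+\Delta p)$, $z=\nabla f^\sharp(p_2+\beta_-\Delta p)$, with the unbalanced choice $\beta_\pm=\sqrt{\frac12\pm\frac{\sigma}{2K\bar\Lambda}}$. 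This gives $\rho K\ge\sqrt{K^2-\bar\Lambda^{-2}}$, which is the claim.
\end{itemize}
Your choice $y=z$ is the case $\beta_+=\beta_-$, which reproduces only $K\le1/(\bar\Lambda\sqrt{1-\rho})$. This is also why the argument is run through the inverse map. In the primal, the relevant extremal quantity is the lower Lipschitz constant of $\nabla f$, an infimum, and a near-minimizing pair gives no near-linearity along its segment.
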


Using DAI, we establish a global, dimension-free first-order complexity bound for \ADA{} over an explicit range of fixed step sizes (\Cref{thm:ada}).

\subsection{Related Work}

In optimization with mixed-sign curvature, descent--ascent structures are common and typically tied to an explicit curvature splitting: strongly convex--strongly concave minimax problems prescribe fixed minimizing and maximizing variables~\cite{rockafellar1976monotone,lin2020nearoptimal}, whereas gentlest-ascent dynamics~\cite{e2011gentlest}, dimer methods~\cite{henkelman1999dimer}, and high-index saddle dynamics~\cite{yin2019high,su2025improved} reconstruct descent--ascent directions by estimating the Hessian's unstable eigenspace. Shugart and Altschuler~\cite{shugart2025negative} study positive and negative full-gradient steps over successive iterations for quadratic objectives, but their nonlinear convergence analysis retains a prescribed convex--concave splitting and uses a modified randomized schedule. In contrast, \ADA{}, as a deterministic scheme, requires neither a prescribed nor a reconstructed curvature splitting.

Another connection is with difference-of-convex (DC) programming, where an objective is represented as $f_1-f_2$ with $f_1$ and $f_2$ convex, and the standard DC algorithm linearizes $f_2$ and minimizes the resulting convex surrogate~\cite{pham1997convex,lethi2018dc}. As shown in \Cref{sec:variational}, the potential underlying \ADA{} yields an auxiliary DC objective of this form for which the resulting DC iteration recovers the \ADA{} updates exactly. Existing convergence analyses establish linear rates under the classical PL condition~\cite{abbaszadehpeivasti2024dca} and under DC-specific PL conditions formulated through Bregman divergences~\cite{faust2023bregman}. In our setting, the analysis takes uniform nondegeneracy of $E$ itself as its starting point, with DAI turning this geometry into quantitative inequalities for the \ADA{} iterates.

The stationarity condition $g(x)=0$ can also be viewed directly as a symmetric nonlinear equation. Quasi-Newton and derivative-free methods for symmetric nonlinear equations achieve global convergence, with asymptotic superlinear or linear rates under uniform nonsingularity and related regularity assumptions~\cite{li1999globally,zhou2015convergence}. These methods use line searches to select the step sizes, and their analyses do not provide explicit bounds on the total iterations or oracle calls required to reach a prescribed residual tolerance. By contrast, \ADA{} is a fixed two-gradient iteration: each cycle consists simply of one full-gradient descent step followed by one full-gradient ascent step. Under uniform nondegeneracy, \ADA{} converges globally and linearly from arbitrary initialization, with explicit dimension-free first-order complexity controlled by the curvature parameters and fixed step sizes.

\subsection{Organization}

\Cref{sec:dai} establishes the characterization of uniform nondegeneracy by DAI through comparisons of two-sided envelopes. \Cref{sec:ada} analyzes \ADA{} and establishes its global first-order complexity. \Cref{sec:benchmarks} relates the theory to classical strongly convex, strongly convex--strongly concave, and linearly constrained settings. \Cref{sec:conclusion} concludes with a discussion of future directions.

\section{Descent--Ascent Inequalities}\label{sec:dai}

In this section, we establish the equivalence between uniform nondegeneracy and DAI. We begin with some geometric properties of uniformly nondegenerate functions.

\begin{proposition}\label{prop:geometry}
Every $E\in\classF_{\mu_\pm,L_\pm}(\R^d)$ has the following properties.
\begin{enumerate}
\item The gradient $g:=\nabla E$ is a global homeomorphism, and thus the stationary point $x^\star:=g^{-1}(0)$ exists and is unique.
\item The Legendre transform
$$
E^\sharp(p):=\langle p,g^{-1}(p)\rangle-E(g^{-1}(p))
$$
satisfies $E^\sharp\in\classF_{1/L_\pm,1/\mu_\pm}(\R^d)$ with gradient $\nabla E^\sharp=g^{-1}$, and $(E^\sharp)^\sharp=E$. The
identity $E(x)+E^\sharp(g(x))=\langle g(x),x\rangle$ gives the Bregman reciprocity identity
\begin{equation}\label{eq:bregman-duality}
D_E(x_1,x_2)=D_{E^\sharp}(g(x_2),g(x_1)),\qquad \forall x_1,x_2\in\R^d.
\end{equation}
\item The primal Bregman difference satisfies
\begin{equation}\label{eq:primal-models}
-\frac{L_-}{2}\norm{x_1-x_2}^2\le D_E(x_1,x_2)\le\frac{L_+}{2}\norm{x_1-x_2}^2,\quad\forall x_1,x_2\in\R^d.
\end{equation}
The dual Bregman difference satisfies
\begin{equation}\label{eq:dual-models}
-\frac{1}{2\mu_-}\norm{p_1-p_2}^2\le D_{E^\sharp}(p_1,p_2)\le\frac{1}{2\mu_+}\norm{p_1-p_2}^2,\quad\forall p_1,p_2\in\R^d.
\end{equation}
\item The inner secant \eqref{eq:secant-inner} admits the refinement
\begin{equation}\label{eq:secant-refined}
\norm{\Delta g-c_\mu\Delta x}^2-\bar\mu^2\norm{\Delta x}^2
\ge2\bar\mu\abs*{D_E(x_1,x_2)-D_E(x_2,x_1)},\quad\forall x_1,x_2\in\R^d.
\end{equation}
\end{enumerate}
\end{proposition}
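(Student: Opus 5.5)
We prove the four items in order; each one is obtained from the pairwise secant formulation \eqref{eq:secants} together with the results of \cite{su2026uniform}.

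\textbf{Item 1 (global homeomorphism).} The inner secant \eqref{eq:secant-inner} gives a lower bound on $\Delta g$. By the reverse triangle inequality, $\norm{\Delta g}\ge\norm{\Delta g-c_\mu\Delta x}-\abs{c_\mu}\norm{\Delta x}\ge(\bar\mu-\abs{c_\mu})\norm{\Delta x}=\min\{\mu_+,\mu_-\}\norm{\Delta x}$. Hence $g$ is injective and uniformly expanding. The outer secant \eqref{eq:secant-outer} gives $\norm{\Delta g}\le(\bar L+\abs{c_L})\norm{\Delta x}=\max\{L_+,L_-\}\norm{\Delta x}$, so $g$ is continuous. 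A continuous injective map $\R^d\to\R^d$ is open by invariance of domain. A map satisfying $\norm{\Delta g}\ge m\norm{\Delta x}$ with $m>0$ is proper, so its image is closed. Since $\R^d$ is connected and the image is nonempty, open and closed, $g$ is onto. Thus $g$ is a bi-Lipschitz homeomorphism, and $x^\star=g^{-1}(0)$ exists and is unique. Alternatively, this step can simply cite \cite[Sections~2--3]{su2026uniform}.

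\textbf{Item 2 (Legendre transform).} Set $h:=g^{-1}$, which is Lipschitz by Item 1. Differentiating $E^\sharp(p)=\langle p,h(p)\rangle-E(h(p))$ is the delicate point, since $h$ is only Lipschitz. We therefore argue directly with increments. With $x_i=h(p_i)$,
\[
E^\sharp(p_1)-E^\sharp(p_2)-\langle x_2,p_1-p_2\rangle=\langle p_1,x_1-x_2\rangle-\bigl(E(x_1)-E(x_2)\bigr)=D_E(x_2,x_1).
\]
The right-hand side is $O(\norm{x_1-x_2}^2)=O(\norm{p_1-p_2}^2)$. Hence $\nabla E^\sharp=h$, and the same computation yields the reciprocity identity \eqref{eq:bregman-duality}.

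For the class membership we substitute $\Delta x=\Delta h$ and $\Delta p=\Delta g$ into \eqref{eq:secants}. The pair of conditions is equivalent to the statement that every "secant ratio" lies in $\mathcal{S}$. In the $C^2$ case this is the statement that the spectrum of $\nabla^2E^\sharp=(\nabla^2E)^{-1}$ lies in $[-1/\mu_-,-1/L_-]\cup[1/L_+,1/\mu_+]$.

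To stay at the $C^1$ level I would use a Möbius-type equivalence. Let $\mathcal{S}_\pm$ denote the two intervals of $\mathcal{S}$. The inner condition says that $\Delta g$ avoids the open ball $B(c_\mu\Delta x,\bar\mu\norm{\Delta x})$. The outer condition says that $\Delta g$ lies in the closed ball $\overline B(c_L\Delta x,\bar L\norm{\Delta x})$. Each of these conditions is a quadratic inequality in $(\Delta x,\Delta g)$:
\[
\norm{\Delta g}^2-2c\langle\Delta g,\Delta x\rangle+(c^2-r^2)\norm{\Delta x}^2\gtrless0 .
\]
Its roots are $c\pm r$, namely $\mu_+,-\mu_-$ for the inner condition and $L_+,-L_-$ for the outer one. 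Swapping the roles of $\Delta x$ and $\Delta g$ and dividing by the product of the roots maps roots $a,b$ to $1/a,1/b$. The signs work out so that the inner and outer conditions exchange: the product of the roots is negative in both cases, which flips the inequality direction. This is the main computational step to check carefully.

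The identity $(E^\sharp)^\sharp=E$ follows because $\nabla E^\sharp=h$ has inverse $g$, together with the symmetry of the defining formula.

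\textbf{Item 3 (Bregman bounds).} Write $D_E(x_1,x_2)=\int_0^1\langle g(x_2+t\Delta x)-g(x_2),\Delta x\rangle\,dt$. The outer secant says $\Delta g$ lies in the ball of center $c_L\Delta x$ and radius $\bar L\norm{\Delta x}$. The inner product $\langle\Delta g,\Delta x\rangle$ over this ball is therefore bounded between $(c_L-\bar L)\norm{\Delta x}^2=-L_-\norm{\Delta x}^2$ and $L_+\norm{\Delta x}^2$. Applying this to the increment $t\Delta x$ and integrating $t\,dt$ gives \eqref{eq:primal-models}. The dual bounds \eqref{eq:dual-models} follow by applying the same argument to $E^\sharp\in\classF_{1/L_\pm,1/\mu_\pm}$, using its outer constants $1/\mu_\pm$.

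\textbf{Item 4 (refined inner secant).} Expanding the square turns the left side of \eqref{eq:secant-refined} into
\[
\norm{\Delta g}^2-2c_\mu\langle\Delta g,\Delta x\rangle-\mu_+\mu_-\norm{\Delta x}^2=\langle\Delta g-\mu_+\Delta x,\;\Delta g+\mu_-\Delta x\rangle .
\]
The right side uses $D_E(x_1,x_2)-D_E(x_2,x_1)=2\bigl(E(x_1)-E(x_2)\bigr)-\langle g(x_1)+g(x_2),\Delta x\rangle$, which is the trapezoid-rule error. This is the obstacle.

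My plan is to parametrize $\gamma(t)=x_2+t\Delta x$ and $G(t)=g(\gamma(t))-g(x_2)$. Then
\[
D_E(x_1,x_2)-D_E(x_2,x_1)=\int_0^1\langle 2G(t)-G(1),\Delta x\rangle\,dt .
\]
The key input is that the inner secant holds between every pair of points on the segment. I would try to show that the function $\phi(t)=\langle G(t)-t\mu_+\Delta x,\;G(t)+t\mu_-\Delta x\rangle$ satisfies $\phi\ge\ldots$ along the segment, combining the secant conditions for the pairs $(0,t)$ and $(t,1)$. In the $C^2$ case this reduces to the scalar inequality $(\lambda-\mu_+)(\lambda+\mu_-)\ge0$ for $\lambda\in\mathcal{S}$, applied via a mean-value argument. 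Failing that, I would cite the corresponding result in \cite{su2026uniform}.
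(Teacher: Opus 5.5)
Your Items 1--3 are correct. For Items 1 and 2 you give self-contained arguments where the paper only cites \cite{su2026uniform}. Your root-inversion check for the class membership of $E^\sharp$ works, since both root products $-\mu_+\mu_-$ and $-L_+L_-$ are negative. Item 3 is the paper's own argument.

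The genuine gap is Item 4. You give no proof there, and the route you sketch cannot be completed: \eqref{eq:secant-refined} does \emph{not} follow from secant information restricted to the segment $[x_2,x_1]$, nor from pointwise Hessian spectra along it. To see this, take $\mu_\pm=\mu$ (so $c_\mu=0$, $\bar\mu=\mu$), $\Delta x=e_1$, and $G(t)=\mu t\,e_2+\epsilon\sin(\pi t)\,e_1$.
\begin{itemize}
\item For all $s,t\in[0,1]$ we have $\mu\abs{t-s}\le\norm{G(t)-G(s)}\le(\mu^2+\pi^2\epsilon^2)^{1/2}\abs{t-s}$. Hence every inner and outer secant between points of the segment holds, with $L_\pm=(\mu^2+\pi^2\epsilon^2)^{1/2}$.
\item Moreover $G'(t)=H(t)e_1$ for the symmetric $H(t)$ with $H_{11}=-H_{22}=\pi\epsilon\cos\pi t$ and $H_{12}=\mu$. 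Its eigenvalues $\pm(\mu^2+\pi^2\epsilon^2\cos^2\pi t)^{1/2}$ lie in $\mathcal S$.
\item Yet the left side of \eqref{eq:secant-refined} is $\norm{G(1)}^2-\mu^2=0$, while your integral $\int_0^1\langle 2G(t)-G(1),e_1\rangle\,dt$ equals $4\epsilon/\pi\neq0$.
\end{itemize}
So no combination of the pairs $(0,t)$ and $(t,1)$ can give the inequality. The $C^2$ mean-value reduction fails too, since an average of matrices with spectrum in $\mathcal S$ need not have spectrum in $\mathcal S$. Your fallback of citing \cite{su2026uniform} is also not what the paper does: it proves this refinement directly here.

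The missing idea is to bring in points off the segment, which the paper does through the Legendre dual. Put $p_i=g(x_i)$ and take an arbitrary trial point $\hat p$, so that $\hat x=\nabla E^\sharp(\hat p)$ can be anywhere in $\R^d$. Then one has the three-point identity
\[
D_{E^\sharp}(p_1,p_2)=\langle\Delta x,p_1-\hat p\rangle-D_{E^\sharp}(\hat p,p_1)+D_{E^\sharp}(\hat p,p_2).
\]
Bound the last two terms by \eqref{eq:dual-models}: upper bound on one and lower bound on the other, then the reverse. Optimizing over $\hat p$ gives
\[
\frac{-\norm{\Delta g}^2+2\mu_+\langle\Delta g,\Delta x\rangle+\mu_+\mu_-\norm{\Delta x}^2}{2(\mu_++\mu_-)}\le D_{E^\sharp}(p_1,p_2)\le\frac{\norm{\Delta g}^2+2\mu_-\langle\Delta g,\Delta x\rangle-\mu_+\mu_-\norm{\Delta x}^2}{2(\mu_++\mu_-)}.
\]
Apply this with $p_1,p_2$ exchanged and subtract. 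This bounds $\abs{D_{E^\sharp}(p_1,p_2)-D_{E^\sharp}(p_2,p_1)}$ by $\bigl(\norm{\Delta g-c_\mu\Delta x}^2-\bar\mu^2\norm{\Delta x}^2\bigr)/(2\bar\mu)$. Reciprocity \eqref{eq:bregman-duality} then turns this into \eqref{eq:secant-refined}.

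You already have every ingredient: the class membership of $E^\sharp$ from your Item 2 and the dual bounds from your Item 3. What is missing is this three-point use of them, with the free point $\hat p$ supplying the global information that no segment-based argument can see.
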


\begin{proof}
The first two properties were established in our preceding work~\cite[Section~2.1]{su2026uniform}.
For the quadratic bounds \eqref{eq:primal-models}, Cauchy--Schwarz in \eqref{eq:secant-outer} gives
\begin{equation*}
-\bar{L}\norm{\Delta x}^2\le\langle\Delta g-c_L\Delta x,\Delta x\rangle\le\bar{L}\norm{\Delta x}^2,
\end{equation*}
or equivalently, $-L_-\norm{\Delta x}^2\le\langle\Delta g,\Delta x\rangle\le L_+\norm{\Delta x}^2$.
Integration along the segment from $x_2$ to $x_1$ yields \eqref{eq:primal-models}. Applying the same argument to $E^\sharp\in\classF_{1/L_\pm,1/\mu_\pm}(\R^d)$ gives \eqref{eq:dual-models}.

To prove \eqref{eq:secant-refined}, put $p_i:=g(x_i)$ for $i=1,2$, so $\nabla E^\sharp(p_i)=x_i$. For any trial point $\hat p\in\R^d$, we can write the Bregman difference as
\begin{equation*}
D_{E^\sharp}(p_1,p_2)=\langle\Delta x,p_1-\hat p\rangle-D_{E^\sharp}(\hat p,p_1)+D_{E^\sharp}(\hat p,p_2).
\end{equation*}
Applying the upper bound in \eqref{eq:dual-models} to $D_{E^\sharp}(\hat p,p_2)$ and the lower bound to $D_{E^\sharp}(\hat p,p_1)$, we obtain
\begin{equation*}
D_{E^\sharp}(p_1,p_2)\le\langle\Delta x,p_1-\hat p\rangle+\frac{\norm{\hat p-p_1}^2}{2\mu_-}+\frac{\norm{\hat p-p_2}^2}{2\mu_+}.
\end{equation*}
Minimizing the right-hand side over $\hat p$ gives
\begin{equation*}
D_{E^\sharp}(p_1,p_2)\le\frac{\norm{\Delta g}^2+2\mu_-\langle\Delta g,\Delta x\rangle-\mu_+\mu_-\norm{\Delta x}^2}{2(\mu_++\mu_-)}.
\end{equation*}
Similarly, reverse the bounds to obtain
\begin{equation*}
D_{E^\sharp}(p_1,p_2)\ge\langle\Delta x,p_1-\hat p\rangle-\frac{\norm{\hat p-p_1}^2}{2\mu_+}-\frac{\norm{\hat p-p_2}^2}{2\mu_-},
\end{equation*}
and maximizing the right-hand side over $\hat p$ gives
\begin{equation*}
D_{E^\sharp}(p_1,p_2)\ge\frac{-\norm{\Delta g}^2+2\mu_+\langle\Delta g,\Delta x\rangle+\mu_+\mu_-\norm{\Delta x}^2}{2(\mu_++\mu_-)}.
\end{equation*}
Applying these bounds with the arguments exchanged and subtracting gives
\begin{equation*}
\begin{aligned}
\abs*{D_{E^\sharp}(p_1,p_2)-D_{E^\sharp}(p_2,p_1)}&\le\frac{\norm{\Delta g}^2-2c_\mu\langle\Delta g,\Delta x\rangle-\mu_+\mu_-\norm{\Delta x}^2}{2\bar\mu}\\
&=\frac{\norm{\Delta g-c_\mu\Delta x}^2-\bar\mu^2\norm{\Delta x}^2}{2\bar\mu},
\end{aligned}
\end{equation*}
which becomes \eqref{eq:secant-refined} by the Bregman reciprocity \eqref{eq:bregman-duality}.
\end{proof}

In particular, \eqref{eq:primal-models} ensures that, whenever $\Lambda_\pm\ge L_\pm$, $\Phi_+(\cdot,y)$ and $\Phi_-(\cdot,z)$ are convex and attain their minimum value zero at $y$ and $z$, respectively.

\subsection{Two-Sided Envelopes}

Before turning to the proof of \Cref{thm:dai}, we establish some preparatory properties of two-sided Moreau envelopes.

For $E\in\classF_{\mu_\pm,L_\pm}(\R^d)$, denote $E^\star:=E(x^\star)$ and define the envelopes
\begin{equation}\label{eq:env}
\M_s(x):=\sup_{\xi\in\R^d}
\left\{s(E(\xi)-E^\star)-\frac12\norm{\xi-x}^2\right\},\quad s\in\I:=\left[-L_-^{-1},L_+^{-1}\right].
\end{equation}

When $s\in\I^\circ=(-L_-^{-1},L_+^{-1})$, the objective is strongly concave, and the supremum is finite, attained at the unique maximizer~\cite{su2026uniform}
\begin{equation*}
J_sx:=(\Id-sg)^{-1}(x)=\begin{cases}
\prox_{s(-E)}(x),&s>0,\\
x,&s=0,\\
\prox_{(-s)E}(x),&s<0,
\end{cases}
\end{equation*}
and the envelope satisfies
\begin{equation}\label{eq:env-derivatives}
\nabla_x\M_s(x)=J_sx-x,\qquad
\partial_s\M_s(x)=E(J_sx)-E^\star.
\end{equation}
At the endpoints $s\in\{-L_-^{-1},L_+^{-1}\}$, the envelope may take the value $+\infty$ and remains lower semicontinuous in $x$. For both cases, we have the following proximal identities.

\begin{proposition}\label{prop:proximal}
For every $x\in\R^d$ and $s\in\I$,
\begin{equation}\label{eq:prox-value}
\min_{u\in\R^d}\left\{\M_s(u)+\frac12\norm{u-x}^2\right\}=s(E(x)-E^\star),
\end{equation}
with the unique minimizer $\prox_{\M_s}(x)=x-s g(x)$.
\end{proposition}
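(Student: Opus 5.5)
The plan is a direct two-sided comparison that uses only the definition \eqref{eq:env} of $\M_s$ and the quadratic bounds \eqref{eq:primal-models}. It does not need the maximizer $J_s$, so it also covers the endpoints of $\I$, where $\M_s$ may equal $+\infty$. Write $F(u):=\M_s(u)+\frac12\norm{u-x}^2$ and $u^\star:=x-sg(x)$.

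\emph{Lower bound.} Take the trial point $\xi=x$ in the supremum defining $\M_s(u)$. This gives $\M_s(u)\ge s(E(x)-E^\star)-\frac12\norm{x-u}^2$, hence $F(u)\ge s(E(x)-E^\star)$ for every $u\in\R^d$.

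\emph{Attainment at $u^\star$.} We need $\M_s(u^\star)\le s(E(x)-E^\star)-\frac12\norm{x-u^\star}^2$. Equivalently, for every $\xi$,
\begin{equation*}
s(E(\xi)-E(x))\le\tfrac12\norm{\xi-u^\star}^2-\tfrac12\norm{x-u^\star}^2 .
\end{equation*}
Since $x-u^\star=sg(x)$, the right-hand side expands to $s\langle g(x),\xi-x\rangle+\frac12\norm{\xi-x}^2$. The requirement therefore reduces to
\begin{equation*}
sD_E(\xi,x)\le\tfrac12\norm{\xi-x}^2 .
\end{equation*}
This follows from \eqref{eq:primal-models} by cases on the sign of $s$:
\begin{itemize}
\item if $0\le s\le L_+^{-1}$, then $sD_E\le \frac{sL_+}{2}\norm{\xi-x}^2\le\frac12\norm{\xi-x}^2$;
\item if $-L_-^{-1}\le s<0$, then $sD_E\le\frac{|s|L_-}{2}\norm{\xi-x}^2\le\frac12\norm{\xi-x}^2$.
\end{itemize}
Hence $F(u^\star)=s(E(x)-E^\star)$, which proves the value identity \eqref{eq:prox-value}.

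\emph{Uniqueness (the main point needing care).} $F$ is not obviously strongly convex, so I would obtain a quantitative gap by a better choice of trial point. Put $v:=u-u^\star$ and take $\xi=x+h$. Expanding as above, and using $u-x=v-sg(x)$, gives
\begin{equation*}
F(u)\ge s(E(x)-E^\star)+sD_E(x+h,x)+\langle h,v\rangle-\tfrac12\norm{h}^2 .
\end{equation*}
Let $\kappa:=|s|\max\{L_+,L_-\}$. By \eqref{eq:primal-models}, $sD_E(x+h,x)\ge-\frac{\kappa}{2}\norm{h}^2$. Choosing $h=v/(1+\kappa)$ then yields
\begin{equation*}
F(u)\ge s(E(x)-E^\star)+\frac{\norm{v}^2}{2(1+\kappa)} .
\end{equation*}
So any $u\neq u^\star$ has a strictly larger value, and $u^\star$ is the unique minimizer, i.e.\ $\prox_{\M_s}(x)=x-sg(x)$.

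This argument uses only the two-sided bounds \eqref{eq:primal-models}, so it is uniform in $s\in\I$, including the endpoints.
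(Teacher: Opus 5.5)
Your proof is correct. It is an elementary, hands-on version of the paper's argument rather than a copy of it.

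The paper sets $f_s(\xi):=\frac12\norm{\xi}^2-s(E(\xi)-E^\star)$, which is convex and $C^1$ by \eqref{eq:primal-models}. It observes $\M_s=f_s^*-\frac12\norm{\cdot}^2$ and rewrites $\M_s(u)+\frac12\norm{u-x}^2-s(E(x)-E^\star)=f_s^*(u)+f_s(x)-\langle u,x\rangle$. The Fenchel--Young inequality then gives the value, and its equality case $u\in\partial f_s(x)=\{x-sg(x)\}$ gives the unique minimizer in one stroke.

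Your first two steps unpack exactly this. Testing $\xi=x$ is the Fenchel--Young inequality. Your attainment condition $sD_E(\xi,x)\le\frac12\norm{\xi-x}^2$ is the gradient inequality expressing convexity of $f_s$ at $x$.

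The real difference is uniqueness. The paper only needs that a differentiable convex function has a singleton subdifferential. You also invoke the other side of \eqref{eq:primal-models}, i.e.\ $(1+\kappa)$-smoothness of $f_s$, to get the quantitative growth $F(u)-F(u^\star)\ge\norm{u-u^\star}^2/(2(1+\kappa))$. That is more than the statement asks for, but it is a genuine bonus. By duality it is just $\frac{1}{1+\kappa}$-strong convexity of $f_s^*$, so $F$ is in fact strongly convex, contrary to your hedge.

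If you only want uniqueness, there is a cheaper route inside your framework. If $u$ attains the minimum, the supremum defining $\M_s(u)$ is attained at $\xi=x$. First-order optimality of $\xi\mapsto sE(\xi)-\frac12\norm{\xi-u}^2$ at $x$ then forces $u=x-sg(x)$.
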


\begin{proof}
By \eqref{eq:primal-models}, the function $f_s(\xi):=\frac12\norm{\xi}^2-s(E(\xi)-E^\star)$ is convex and $C^1$. Writing $f_s^*$ for its convex conjugate, \eqref{eq:env} then gives $\M_s(u)=f_s^*(u)-\frac12\norm{u}^2$. The Fenchel--Young inequality~\cite[Sections~2.c--d]{moreau1965proximite} therefore yields
\begin{equation*}
\M_s(u)+\frac12\norm{u-x}^2-s(E(x)-E^\star)
=f_s^*(u)+f_s(x)-\langle u,x\rangle\ge0,
\end{equation*}
with equality precisely when $u\in\partial f_s(x)=\{\nabla f_s(x)\}=\{x-sg(x)\}$.
\end{proof}

We next compare the envelope with the original energy at the same point.

\begin{proposition}\label{prop:envelope-comparison}
For every $x\in\R^d$ and $s\in\I^\circ$,
\begin{equation}\label{eq:env-bound}
(1-s\mu_+)(1+s\mu_-)\M_s(x)\ge s(E(x)-E^\star)+\frac{s^2\mu_+\mu_-}{2}\norm{x-x^\star}^2.
\end{equation}
\end{proposition}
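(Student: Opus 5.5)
The plan is to reduce \eqref{eq:env-bound} to a pointwise quadratic inequality by testing the supremum in \eqref{eq:env} against a well-chosen point, expressed in dual variables so that only the curvature bounds $\mu_\pm$ enter. The case $s=0$ is trivial, with both sides equal to zero. By the symmetry $E\mapsto -E$, $s\mapsto -s$, $\mu_+\leftrightarrow\mu_-$, $L_+\leftrightarrow L_-$, I would treat $s\in(0,L_+^{-1})$ and transfer the result to $s<0$. This symmetry leaves the product $(1-s\mu_+)(1+s\mu_-)$ invariant and maps $\classF_{\mu_\pm,L_\pm}$ to $\classF_{\mu_\mp,L_\mp}$. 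As a sanity check, first verify the claim on the scalar quadratic $E=\frac{\lambda}{2}x^2$. There $M_s(x)=\frac{s\lambda}{1-s\lambda}\frac{x^2}{2}$, and \eqref{eq:env-bound} reduces to
\[
(1-s\mu_+)(1+s\mu_-)\lambda\ge(1-s\lambda)(\lambda+s\mu_+\mu_-)\qquad(\lambda\in\mathcal S,\ s>0).
\]
This is equivalent to $s(\lambda-\mu_+)(\lambda+\mu_-)\ge0$, and that holds exactly on $\mathcal S$. So the proof should reproduce this factorization $(\lambda-\mu_+)(\lambda+\mu_-)\ge0$ at the level of secants.

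The concrete route is to take the maximizer $\xi:=J_sx$, so $x=\xi-sg(\xi)$, and set $p:=g(\xi)$ and $a:=\xi-x^\star$. Then $M_s(x)=s(E(\xi)-E^\star)-\frac{s^2}{2}\norm{p}^2$. The right-hand side is rewritten using $E(x)=E(\xi)-s\norm{p}^2+D_E(x,\xi)$ and $x-x^\star=a-sp$. After these substitutions the inequality involves only four kinds of terms: the Bregman difference $E(\xi)-E^\star=D_E(\xi,x^\star)=D_{E^\sharp}(0,p)$, the term $D_E(x,\xi)$, and the quadratic quantities $\norm p^2$, $\langle p,a\rangle$, $\norm a^2$. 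For the first, I would use the two-sided bounds obtained in the proof of \Cref{prop:geometry}(4) by optimizing a trial point $\hat p$ against \eqref{eq:dual-models}. These bound $D_{E^\sharp}(0,p)$ above and below by explicit quadratics in $\Delta g=p$ and $\Delta x=a$ with coefficients in $\mu_\pm$ only.

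The main obstacle is the term $D_E(x,\xi)$. Its natural bounds \eqref{eq:primal-models} involve $L_\pm$, which are absent from \eqref{eq:env-bound}. Its dual form $D_{E^\sharp}(p,g(x))$ introduces the uncontrolled quantity $g(x)$. My first attempt would avoid $D_E(x,\xi)$ entirely. I would not evaluate at the maximizer but would lower bound $M_s(x)$ by a trial point $\xi$ parametrized dually, $\xi=\nabla E^\sharp(\hat p)$ with $\hat p$ a combination of $g(x)$ and $0=g(x^\star)$, mimicking the one-dimensional maximizer $x^\star+\frac{x-x^\star}{1-s\lambda}$. With this choice, $E(\xi)$ is expressed through $E^\sharp$ at three dual points $0$, $g(x)$, $\hat p$. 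Each Bregman difference among these is sandwiched by \eqref{eq:dual-models}, and the gaps between the primal points are controlled by \eqref{eq:secant-inner}. The desired inequality then becomes nonnegativity of a quadratic form in $(g(x),x-x^\star)$ plus a free trial vector. I would optimize the trial vector in closed form, as in the proof of \eqref{eq:secant-refined}, and expect the resulting form to be nonnegative precisely by the secant condition $\norm{\Delta g-c_\mu\Delta x}^2\ge\bar\mu^2\norm{\Delta x}^2$ applied to $(x,x^\star)$. That condition is the secant analogue of $(\lambda-\mu_+)(\lambda+\mu_-)\ge0$.
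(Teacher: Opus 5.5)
Your symmetry reduction and the scalar check are correct. The fixed-$s$ trial-point strategy, however, fails at its first step with the trial point you specify.

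Inequality \eqref{eq:env-bound} is an \emph{equality} for $E(x)=\frac12\langle Hx,x\rangle$ on $\R^2$ with $H=\diag(\mu_+,-\mu_-)$. This quadratic lies in $\classF_{\mu_\pm,L_\pm}(\R^2)$, and by your own computation the slack in each eigendirection is proportional to $(\lambda-\mu_+)(\lambda+\mu_-)=0$. Since the supremand in \eqref{eq:env} is strictly concave for $s\in\I^\circ$, the only point attaining $\M_s(x)$ is $J_sx=(\Id-sH)^{-1}x$. Its dual coordinate $(\Id-sH)^{-1}g(x)$ is not a multiple of $g(x)$ unless $x$ lies on a coordinate axis.

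Concretely, take $\mu_\pm=1$ (any $L_\pm\ge1$), $s=1/4$ and $x=(1,1)$. Then $\M_s(x)=1/15$ and both sides of \eqref{eq:env-bound} equal $1/16$. On the other hand, $\nabla E^\sharp(\alpha g(x))=\alpha x$ and $sE(\alpha x)-\frac12\norm{\alpha x-x}^2=-(\alpha-1)^2\le0$ for every $\alpha$. So the inequality you would need for the trial value is false. No later sandwiching through \eqref{eq:dual-models}, optimization of an auxiliary vector, or use of \eqref{eq:secant-inner} at $(x,x^\star)$ can repair it. The primal rescaling $x^\star+\frac{x-x^\star}{1-s\theta}$ fails for the same reason.

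In these equality cases the maximizer is $\frac{(1-2sc_\mu)x+sg(x)}{(1-s\mu_+)(1+s\mu_-)}$. This is a genuinely new point mixing primal and dual data. Controlling $E$ there with $\mu_\pm$ alone is exactly the $D_E(x,\xi)$ obstacle you identified.

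The missing idea is to never compare $E(x)$ with $E$ at another point for fixed $s$. The paper keeps the exact maximizer $\tilde\xi=J_sx$ and differentiates in $s$. By \eqref{eq:env-derivatives}, $\partial_s[\M_s(x)/s]=\frac12\norm{g(\tilde\xi)}^2$. Substituting $x=\tilde\xi-sg(\tilde\xi)$ then yields
\[
\begin{aligned}
&\partial_s\Bigl[(1-s\mu_+)(1+s\mu_-)\frac{\M_s(x)}{s}\Bigr]-\frac{\mu_+\mu_-}{2}\norm{x-x^\star}^2\\
&\quad=\frac12\Bigl[\norm{g(\tilde\xi)-c_\mu(\tilde\xi-x^\star)}^2-\bar\mu^2\norm{\tilde\xi-x^\star}^2\Bigr]-(c_\mu+s\mu_+\mu_-)\bigl[D_E(\tilde\xi,x^\star)-D_E(x^\star,\tilde\xi)\bigr].
\end{aligned}
\]
The right-hand side is nonnegative by the refined secant \eqref{eq:secant-refined} at $(\tilde\xi,x^\star)$, because $\abs{c_\mu+s\mu_+\mu_-}<\bar\mu$ on $\I^\circ$. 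Integrating from $s=0$, where the bracketed quantity tends to $E(x)-E^\star$, gives \eqref{eq:env-bound} for both signs of $s$.

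So the information actually used is the refined secant, Bregman asymmetry included, along the whole path $s'\mapsto J_{s'}x$ between $0$ and $s$. It is not the plain inner secant at the single pair $(x,x^\star)$, and the term $D_E(x,\tilde\xi)$ never appears.
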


\begin{proof}
For $s\in\I^\circ\setminus\{0\}$, evaluating \eqref{eq:env} at its maximizer $\tilde\xi:=J_sx$ gives
\begin{equation*}
\M_s(x)=s(E(\tilde\xi)-E^\star)-\frac12\norm{\tilde\xi-x}^2=s(E(\tilde{\xi})-E^\star)-\frac{s^2}{2}\norm{g(\tilde\xi)}^2,
\end{equation*}
and hence, by \eqref{eq:env-derivatives},
\begin{equation*}
\partial_s\left[\frac{\M_s(x)}{s}\right]=\frac{\partial_s\M_s(x)}{s}-\frac{\M_s(x)}{s^2}=\frac{E(\tilde\xi)-E^\star}{s}-\frac{\M_s(x)}{s^2}=\frac12\norm{g(\tilde\xi)}^2.
\end{equation*}
Therefore,
\begin{align*}
&\partial_s\left[(1-s\mu_+)(1+s\mu_-)\frac{\M_s(x)}{s}\right]\\
=&\,(1-s\mu_+)(1+s\mu_-)\cdot\frac12\norm{g(\tilde\xi)}^2-(\mu_+-\mu_-+2s\mu_+\mu_-)\cdot\frac{\M_s(x)}{s}\\
=&\,\frac{1+s^2\mu_+\mu_-}{2}\norm{g(\tilde\xi)}^2-2(c_\mu+s\mu_+\mu_-)\left(E(\tilde\xi)-E^\star\right)\\
=&\,\frac{1+s^2\mu_+\mu_-}{2}\norm{g(\tilde\xi)}^2-(c_\mu+s\mu_+\mu_-)\left(D_E(\tilde\xi,x^\star)-D_E(x^\star,\tilde\xi)+\langle g(\tilde\xi),\tilde\xi-x^\star\rangle\right),
\end{align*}
where the last line uses $D_E(\tilde\xi,x^\star)-D_E(x^\star,\tilde\xi)
=2(E(\tilde\xi)-E^\star)-\langle g(\tilde\xi),\tilde\xi-x^\star\rangle$.
Since $x=\tilde\xi-sg(\tilde\xi)$ and $\bar\mu^2-c_\mu^2=\mu_+\mu_-$, we obtain
\begin{equation}\label{eq:env-monotone}
\begin{aligned}
&\partial_s\left[(1-s\mu_+)(1+s\mu_-)\frac{\M_s(x)}{s}\right]-\frac{\mu_+\mu_-}{2}\norm{x-x^\star}^2\\
=&\,\frac12\left[\norm{g(\tilde\xi)-c_\mu(\tilde\xi-x^\star)}^2-\bar\mu^2\norm{\tilde\xi-x^\star}^2\right]\\
&\quad-(c_\mu+s\mu_+\mu_-)\left[D_E(\tilde\xi,x^\star)-D_E(x^\star,\tilde\xi)\right]\ge0,
\end{aligned}
\end{equation}
where the nonnegativity follows from \eqref{eq:secant-refined}, together with the fact that $s\in\I^\circ$ gives $\abs{c_\mu+s\mu_+\mu_-}<\bar\mu$.

At $s=0$, $\M_0(x)=0$, so \eqref{eq:env-derivatives} and $J_0x=x$ give
\begin{equation*}
\lim_{s\to0}(1-s\mu_+)(1+s\mu_-)\frac{\M_s(x)}s=\partial_s\M_s(x)\big|_{s=0}=E(x)-E^\star.
\end{equation*}

Therefore, integrating \eqref{eq:env-monotone} from zero to $s$ gives \eqref{eq:env-bound} for $s\in\I^\circ$.
\end{proof}

This comparison also extends to the energy at a rescaled point.

\begin{corollary}\label{cor:rescaled-anchor}
For every $x\in\R^d$, $s\in\I^\circ$, and $\theta\in[-\mu_-,\mu_+]$,
\begin{equation}\label{eq:env-rescaled}
(1-s\mu_+)(1+s\mu_-)\M_s(x)
\ge s(1-s\theta)^2
\left(E(\tilde{x}_\theta)-E^\star\right)+\frac{s^2\mu_+\mu_-}{2}\norm{x-x^\star}^2,
\end{equation}
where $\tilde{x}_\theta:=x^\star+\frac{x-x^\star}{1-s\theta}$.
\end{corollary}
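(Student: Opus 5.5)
The idea is to reduce \eqref{eq:env-rescaled} to \Cref{prop:envelope-comparison} applied to a tilted and rescaled auxiliary function, rather than to $E$ itself. Put $t:=1-s\theta$ and $b:=x-x^\star$, so that $\tilde x_\theta=x^\star+b/t$. Since $s\in\I^\circ$ and $\theta\in[-\mu_-,\mu_+]$, we have $t\ge\min\{1-s\mu_+,1+s\mu_-\}>0$; this uses $s\mu_+<s L_+<1$ for $s>0$ and $-s\mu_-<1$ for $s<0$.

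\textbf{Step 1 (tilting).} Define
\begin{equation*}
G(\xi):=E(\xi)-\frac{\theta}{2}\norm{\xi-x^\star}^2 .
\end{equation*}
Its Hessian (or secant) spectrum is shifted by $-\theta$, so $G$ lies in $\classF_{\mu_\pm\mp\theta,\,L_\pm\mp\theta}$. Moreover $x^\star$ is still its stationary point, and $G^\star=E^\star$. In the integrand of \eqref{eq:env}, with $a:=\xi-x^\star$, we can complete the square:
\begin{equation*}
s(E(\xi)-E^\star)-\tfrac12\norm{a-b}^2
=s(G(\xi)-G^\star)-\tfrac t2\norm{a-b/t}^2+\tfrac{s\theta}{2t}\norm b^2 .
\end{equation*}
Factoring out $t$ and taking the supremum gives the exact identity
\begin{equation*}
\M_s(x)=t\,\M^G_{s/t}(\tilde x_\theta)+\frac{s\theta}{2t}\norm{x-x^\star}^2,
\end{equation*}
where $\M^G$ denotes the envelope \eqref{eq:env} built from $G$.

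\textbf{Step 2 (apply \Cref{prop:envelope-comparison} to $G$).} Use the parameters $\mu_+-\theta$ and $\mu_-+\theta$, at the scale $s/t$ and the point $\tilde x_\theta$. The key algebraic coincidences are
\begin{equation*}
1-\frac st(\mu_+-\theta)=\frac{1-s\mu_+}{t},\qquad 1+\frac st(\mu_-+\theta)=\frac{1+s\mu_-}{t}.
\end{equation*}
Hence the prefactor becomes $(1-s\mu_+)(1+s\mu_-)/t^2$. The same identities show that $s/t$ lies in the open admissible interval for $G$, because $1-(s/t)(L_+-\theta)=(1-sL_+)/t>0$, and similarly on the negative side. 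The proposition therefore yields
\begin{equation*}
\frac{(1-s\mu_+)(1+s\mu_-)}{t^2}\M^G_{s/t}(\tilde x_\theta)\ge\frac st\bigl(G(\tilde x_\theta)-E^\star\bigr)+\frac{s^2(\mu_+-\theta)(\mu_-+\theta)}{2t^2}\norm{\tilde x_\theta-x^\star}^2 .
\end{equation*}

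\textbf{Step 3 (recombine).} Multiply the inequality of Step 2 by $t^3$. Substitute $G(\tilde x_\theta)=E(\tilde x_\theta)-\frac{\theta}{2t^2}\norm b^2$, then add $(1-s\mu_+)(1+s\mu_-)\frac{s\theta}{2t}\norm b^2$ using the identity of Step 1. The energy term becomes $st^2(E(\tilde x_\theta)-E^\star)$, which is the claimed $s(1-s\theta)^2(E(\tilde x_\theta)-E^\star)$. The coefficient of $\frac12\norm b^2$, after multiplication by $t$, is
\begin{equation*}
-s\theta t+s^2(\mu_+-\theta)(\mu_-+\theta)+s\theta(1-s\mu_+)(1+s\mu_-).
\end{equation*}
A direct expansion shows that this equals $s^2\mu_+\mu_-t$. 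So the quadratic term collapses to exactly $\frac{s^2\mu_+\mu_-}{2}\norm{x-x^\star}^2$, which is \eqref{eq:env-rescaled}.

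\textbf{Main obstacle.} The delicate point is the endpoint cases $\theta=\mu_+$ or $\theta=-\mu_-$. There the tilted function $G$ has a vanishing lower curvature parameter, so \Cref{prop:envelope-comparison}, which is stated for strictly positive $\mu_\pm$, does not apply directly. I would handle the interior $\theta\in(-\mu_-,\mu_+)$ first and then pass to the limit $\theta\to\pm$ endpoints. This is legitimate because both sides of \eqref{eq:env-rescaled} are continuous in $\theta$: the left side does not depend on $\theta$, and $E$ is continuous while $t$ stays bounded away from $0$. Alternatively, one can apply the argument with the parameters $\mu_\pm$ slightly shrunk, which keeps $E$ in the class, and let them tend back to their true values.
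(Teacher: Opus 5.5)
Your proposal is correct and matches the paper's proof essentially step for step: the same tilted energy $E-\frac{\theta}{2}\norm{\cdot-x^\star}^2$ in $\classF_{\mu_\pm\mp\theta,L_\pm\mp\theta}$, the same rescaled scale $s/(1-s\theta)$ and envelope identity, \Cref{prop:envelope-comparison} applied at $(s/(1-s\theta),\tilde x_\theta)$, and continuity in $\theta$ for the endpoints, with your Step~3 supplying the coefficient check that the paper leaves implicit. Drop the alternative of shrinking $\mu_\pm$: it shrinks the admissible $\theta$-interval and so cannot reach $\theta\in\{-\mu_-,\mu_+\}$, but your continuity argument alone suffices.
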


\begin{proof}
For $\theta\in(-\mu_-,\mu_+)$, the shifted energy $\widetilde E(\xi):=E(\xi)-\frac{\theta}{2}\norm{\xi-x^\star}^2$ belongs to $\classF_{\tilde\mu_\pm,\tilde L_\pm}$ with endpoints $\tilde\mu_\pm:=\mu_\pm\mp\theta$ and $\tilde L_\pm:=L_\pm\mp\theta$, and has the same stationary point and value as $E$.

Since $s\in\I^\circ$, we have $1-s\theta>0$, and thus $\tilde s_\theta:=s/(1-s\theta)$ satisfies
\begin{equation*}
1-\tilde{s}_\theta\tilde{L}_+=\frac{1-sL_+}{1-s\theta}>0,\qquad
1+\tilde{s}_\theta\tilde{L}_-=\frac{1+sL_-}{1-s\theta}>0,
\end{equation*}
or equivalently, $\tilde{s}_\theta\in(-1/\tilde L_-,1/\tilde L_+)$.

Denote the corresponding envelope of $\widetilde E$ by $\widetilde{\M}_{\tilde s_\theta}$, and note that
\begin{equation*}
\begin{aligned}
&\tilde s_\theta\bigl(\widetilde E(\xi)-E^\star\bigr)
-\frac12\norm{\xi-\tilde x_\theta}^2\\
&=\frac1{1-s\theta}
\left[s(E(\xi)-E^\star)-\frac12\norm{\xi-x}^2\right]
-\frac{s\theta}{2(1-s\theta)^2}\norm{x-x^\star}^2,
\end{aligned}
\end{equation*}
so taking the supremum over $\xi\in\R^d$ yields
\begin{equation*}
\widetilde{\M}_{\tilde s_\theta}(\tilde x_\theta)
=\frac{\M_s(x)}{1-s\theta}-\frac{s\theta}{2(1-s\theta)^2}\norm{x-x^\star}^2,
\end{equation*}
which, together with \Cref{prop:envelope-comparison} applied to $\widetilde E$ at $(\tilde s_\theta,\tilde x_\theta)$, gives \eqref{eq:env-rescaled}.

The endpoint cases $\theta=-\mu_-$ or $\theta=\mu_+$ follow by continuity.
\end{proof}

\subsection{Descent--Ascent Inequalities}
With these envelope properties, we can now prove \Cref{thm:dai}. We first compare the two envelopes at independent anchors around the stationary point $x^\star$.

\begin{lemma}\label{lem:two-anchor}
For positive weights $\rho_\pm$ and scales $\Lambda_\pm$ satisfying \eqref{eq:dai-conditions},
\begin{equation}\label{eq:env-pair}
\rho_+\left[\frac12\norm{\xi_+-x^\star}^2+\M_+(\xi_+)\right]
+\rho_-\left[\frac12\norm{\xi_--x^\star}^2+\M_-(\xi_-)\right]\ge\langle\xi_+-x^\star,\xi_--x^\star\rangle
\end{equation}
for every $\xi_+,\xi_-\in\R^d$, where $\M_+:=\M_{1/\Lambda_+}$ and $\M_-:=\M_{-1/\Lambda_-}$.
\end{lemma}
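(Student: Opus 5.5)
The plan is to reduce \eqref{eq:env-pair} to a two-by-two quadratic form whose positive semidefiniteness is exactly $\rho_+\rho_-\ge q_\Lambda$ together with the diagonal conditions $\rho_\pm\ge(\bar\Lambda-\delta_\Lambda)/\Lambda_\mp$. This is guided by the quadratic model. Translate so that $x^\star=0$ and $E^\star=0$, and write $a:=\xi_+$, $b:=\xi_-$. For $E=\frac\lambda2\norm{\cdot}^2$ with $\lambda\in\mathcal{S}$, a direct computation from \eqref{eq:env} gives
\begin{equation*}
\tfrac12\norm{a}^2+\M_+(a)=\frac{\norm{a}^2}{2(1-\lambda/\Lambda_+)},\qquad
\tfrac12\norm{b}^2+\M_-(b)=\frac{\norm{b}^2}{2(1+\lambda/\Lambda_-)}.
\end{equation*}
Hence \eqref{eq:env-pair} is equivalent to
\begin{equation*}
\rho_+\rho_-\ge\left(1-\frac{\lambda}{\Lambda_+}\right)\left(1+\frac{\lambda}{\Lambda_-}\right),
\end{equation*}
i.e.\ to $\rho_+\rho_-\ge q_\Lambda$ after maximizing over $\lambda\in\mathcal{S}$ as in \eqref{eq:q}. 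So the task is to show that the quadratic case is the worst case. The endpoint case $\Lambda_\pm=L_\pm$, where $\M_\pm$ may be infinite, will be handled by proving the inequality for $\Lambda_\pm$ slightly larger and passing to the limit, using lower semicontinuity of the envelopes in $x$ and monotonicity of $\M_s$ in $s$.

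\textbf{Step 1: lower bounds on the envelopes.} I will apply \Cref{cor:rescaled-anchor} at $s=1/\Lambda_+$ (to $a$) and at $s=-1/\Lambda_-$ (to $b$), using a common shift $\theta$. The natural choice is $\theta=\Pi(c_\Lambda)$, the projection of $c_\Lambda$ onto the gap $[-\mu_-,\mu_+]$, so that $\abs{c_\Lambda-\theta}=\delta_\Lambda$ when $c_\Lambda$ lies outside the gap and $\theta=c_\Lambda$ otherwise. With this choice $1-\theta/\Lambda_+$ and $1+\theta/\Lambda_-$ become $(\bar\Lambda\mp\delta_\Lambda)/\Lambda_\pm$-type quantities, and this is where $\bar\Lambda-\delta_\Lambda$ in \eqref{eq:dai-conditions} should come from. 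The corollary yields:
\begin{itemize}
\item quadratic terms $\frac{s^2\mu_+\mu_-}{2}\norm{\cdot}^2$ in $\norm{a}^2$ and $\norm{b}^2$;
\item residual energies $+\frac{(1-\theta/\Lambda_+)^2}{\Lambda_+}E(\tilde a)$ and $-\frac{(1+\theta/\Lambda_-)^2}{\Lambda_-}E(\tilde b)$ at the rescaled anchors $\tilde a,\tilde b$.
\end{itemize}

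\textbf{Step 2: cancelling the residual energies (main obstacle).} The residual energies carry opposite signs and are evaluated at different points, so they cannot simply be dropped. My first attempt is to return to the sup definition \eqref{eq:env} and the proximal identity of \Cref{prop:proximal}, in the form $\frac12\norm{u-\xi}^2+\M_s(u)\ge s\,E(\xi)$ for all $u,\xi$. I would evaluate both envelopes against a common trial point $\xi=\alpha a+\beta b$, with $\alpha,\beta$ chosen by a minimax. The weights of the two one-sided bounds are tuned so that the $E(\xi)$ coefficients $\rho_+/\Lambda_+$ and $-\rho_-/\Lambda_-$ combine with the Corollary's residuals into a single multiple of $E(\xi)-\frac{\theta}{2}\norm{\xi}^2$. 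That shifted energy lies in the shifted class of \Cref{cor:rescaled-anchor}, and its sign-indefinite part is controlled by \eqref{eq:primal-models} applied to the shifted endpoints $L_\pm\mp\theta$. If exact cancellation fails, the fallback is to use \eqref{eq:primal-models} on the leftover energy terms and absorb the resulting quadratic slack into the diagonal conditions $\rho_\pm\ge(\bar\Lambda-\delta_\Lambda)/\Lambda_\mp$. I expect most of the work to be here: the exponents $(1-s\theta)^2$ and $s^2\mu_+\mu_-$ from the corollary have to match the quadratic-model coefficients exactly.

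\textbf{Step 3: the quadratic form.} Once the energies are eliminated, \eqref{eq:env-pair} reduces to $A\norm{a}^2+B\norm{b}^2\ge 2\langle a,b\rangle$ with explicit $A,B$ depending on $\rho_\pm,\Lambda_\pm,\theta,\mu_\pm$. This holds whenever $A,B\ge0$ and $AB\ge1$. I will then check that $A,B\ge0$ follow from the diagonal conditions and that $AB\ge1$ is exactly $\rho_+\rho_-\ge(\bar\Lambda^2-\delta_\Lambda^2)/(\Lambda_+\Lambda_-)=q_\Lambda$, via the identity \eqref{eq:q}.
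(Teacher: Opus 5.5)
There is a genuine gap, and it lies in your Step~2, which carries the whole content of the lemma. After \Cref{cor:rescaled-anchor} is applied to both envelopes, the residual energies sit at two different points, $\tilde a\propto a$ and $\tilde b\propto b$. Since $a,b$ are independent, no single trial point $\xi=\alpha a+\beta b$ can merge them into one multiple of $E(\xi)-\frac\theta2\norm{\xi}^2$.

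The fallback fails for a structural reason. If both residuals are removed with \eqref{eq:primal-models}, the two brackets are bounded separately, $\frac12\norm{a}^2+\M_+(a)\ge c_+\norm{a}^2$ and $\frac12\norm{b}^2+\M_-(b)\ge c_-\norm{b}^2$. Your own quadratic computation at $\lambda=-L_-$ and $\lambda=L_+$ forces $c_+\le\frac{1}{2(1+L_-/\Lambda_+)}$ and $c_-\le\frac{1}{2(1+L_+/\Lambda_-)}$. Step~3 could then give at best $\rho_+\rho_-\ge(1+L_-/\Lambda_+)(1+L_+/\Lambda_-)$, which is strictly larger than $q_\Lambda$. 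The two anchors must be coupled through a common energy value.

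The fixed shift $\theta=\Pi(c_\Lambda)$ is a second obstruction. The left side of \eqref{eq:env-pair} is linear in $(\rho_+,\rho_-)$ with nonnegative brackets, so the set of valid weights is convex and upward closed. Each point of the arc $\rho_+\rho_-=q_\Lambda$ between the diagonal bounds is extreme and must be proved directly. A shift chosen independently of the weights can be tuned to at most one ratio $\rho_+/\rho_-$.

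Nor does $\Pi(c_\Lambda)$ produce $\delta_\Lambda=\dist(c_\Lambda,\mathcal S)$:
\begin{itemize}
\item in the gap, your $\theta=c_\Lambda$ carries no trace of $\delta_\Lambda>0$;
\item on $\mathcal S$, $\delta_\Lambda=0$ while $\abs{c_\Lambda-\theta}>0$ in general;
\item beyond the outer endpoints, $\delta_\Lambda=c_\Lambda-L_+$ (or $-L_--c_\Lambda$) reflects the outer bounds $L_\pm$, which the corollary for the original class, built on $\mu_\pm$, does not see.
\end{itemize}

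The missing idea is an exact reduction to one anchor. In your normalization $x^\star=0$, $E^\star=0$, test $\M_+(a)$ at $\tilde\xi:=b/\rho_+$, a point proportional to the \emph{other} anchor. The identity $\frac12\norm{a}^2-\frac12\norm{\tilde\xi-a}^2=\langle\tilde\xi,a\rangle-\frac12\norm{\tilde\xi}^2$ produces the cross term $\langle a,b\rangle$ exactly and eliminates $a$. Nothing is lost here, since $\frac12\norm{\cdot}^2+\M_+$ is a convex conjugate. What remains is the one-anchor inequality \eqref{eq:env-minus}.

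This is proved by centering: $\widetilde E:=E-\frac{c_\Lambda}{2}\norm{\cdot}^2$ lies in the class with $\tilde\mu_\pm=\delta_\Lambda$ and $\tilde L_\pm=\bar\Lambda$. One then applies \Cref{cor:rescaled-anchor} to $\widetilde E$ at $\frac{\Lambda_-}{\bar\Lambda}b$, with $s\downarrow-1/\bar\Lambda$ and the weight-dependent shift $\theta:=\Lambda_-\rho_+-\bar\Lambda$. This choice makes the rescaled anchor equal to $\tilde\xi$, so the energy from testing $\M_+$ cancels exactly against the corollary's residual. The diagonal conditions together with $\rho_+\rho_-=q_\Lambda$ are precisely $\abs{\theta}\le\delta_\Lambda$. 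That is their real role; they are irrelevant for quadratics, so ``the quadratic case is the worst case'' is not the right target.

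You also need three smaller steps:
\begin{itemize}
\item the preliminary reduction to $\rho_+\rho_-=q_\Lambda$, by lowering the weights and using nonnegativity of the brackets;
\item a direct test of $\M_-$ at $\tilde\xi$ when $\delta_\Lambda=0$;
\item a perturbation of $\mu_\pm$ when $q_\Lambda=0$.
\end{itemize}
Finally, $\M_s$ is not monotone in $s$ for indefinite $E$, so endpoint limits should use its convexity and lower semicontinuity in $s$.
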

\begin{proof}
First suppose $q_\Lambda>0$. Since $\M_s(x)\ge-\frac12\norm{x-x^\star}^2$, both brackets in \eqref{eq:env-pair} are nonnegative, so it suffices to prove the equality case $\rho_+\rho_-=q_\Lambda$ after reducing $\rho_\pm$.

By definition \eqref{eq:env}, testing $\M_+(\xi_+)$ at
\begin{equation}\label{eq:env-trial}
\tilde\xi:=x^\star+\frac{\xi_--x^\star}{\rho_+}
\end{equation}
gives
\begin{equation*}
\M_+(\xi_+)\ge\frac{E(\tilde\xi)-E^\star}{\Lambda_+}-\frac12\norm{\tilde\xi-\xi_+}^2.
\end{equation*}
Since $\rho_+(\tilde\xi-x^\star)=\xi_--x^\star$ and
\begin{equation*}
\frac12\norm{\xi_+-x^\star}^2-\frac12\norm{\tilde\xi-\xi_+}^2=\langle\tilde\xi-x^\star,\xi_+-x^\star\rangle-\frac12\norm{\tilde\xi-x^\star}^2,
\end{equation*}
we have
\begin{equation*}
\rho_+\left[\frac12\norm{\xi_+-x^\star}^2+\M_+(\xi_+)\right]\ge\langle\xi_+-x^\star,\xi_--x^\star\rangle-\rho_+\left[\frac12\norm{\tilde\xi-x^\star}^2-\frac{E(\tilde\xi)-E^\star}{\Lambda_+}\right].
\end{equation*}
Therefore, it remains to prove
\begin{equation}\label{eq:env-minus}
\rho_-\left[\frac12\norm{\xi_--x^\star}^2+\M_-(\xi_-)\right]
\ge\rho_+\left[\frac12\norm{\tilde\xi-x^\star}^2-\frac{E(\tilde\xi)-E^\star}{\Lambda_+}\right].
\end{equation}

When $\delta_\Lambda=0$, the admissible values from \eqref{eq:dai-conditions} and \eqref{eq:q} force $\rho_+=\bar\Lambda/\Lambda_-$ and $\rho_-=\bar\Lambda/\Lambda_+$. Testing $\M_-(\xi_-)$ at $\tilde\xi$ from \eqref{eq:env-trial} gives
\begin{equation*}
\Lambda_-\left[\frac12\norm{\xi_--x^\star}^2+\M_-(\xi_-)\right]
\ge-(E(\tilde\xi)-E^\star)+\frac{\Lambda_+}{2}\norm{\tilde\xi-x^\star}^2,
\end{equation*}
which is equivalent to \eqref{eq:env-minus}.

When $\delta_\Lambda>0$, the centered energy
\begin{equation}\label{eq:energy-shift}
\widetilde E(\xi):=E(\xi)-\frac{c_\Lambda}{2}\norm{\xi-x^\star}^2
\end{equation}
satisfies, by the triangle and reverse triangle inequalities in \eqref{eq:secants},
\begin{equation*}
\delta_\Lambda\norm{\Delta x}
\le\norm{\nabla\widetilde E(x_1)-\nabla\widetilde E(x_2)}
\le\bar\Lambda\norm{\Delta x}.
\end{equation*}
Since $0<\delta_\Lambda<\bar\Lambda$, $\widetilde E$ therefore belongs to $\classF_{\tilde\mu_\pm,\tilde L_\pm}$ with $\tilde\mu_+=\tilde\mu_-=\delta_\Lambda$ and $\tilde L_+=\tilde L_-=\bar\Lambda$, while its stationary point and value remain $x^\star$ and $E^\star$.

Take $\theta:=\Lambda_-\rho_+-\bar\Lambda$. By \eqref{eq:dai-conditions} and \eqref{eq:q}, the equality $\rho_+\rho_-=q_\Lambda$ gives
\begin{equation*}
\frac{\bar{\Lambda}-\delta_\Lambda}{\Lambda_-}\le\rho_+\le\frac{\bar{\Lambda}+\delta_\Lambda}{\Lambda_-},
\end{equation*}
so $\abs{\theta}\le\delta_\Lambda$.
With $\widetilde{\M}$ denoting the envelope of $\widetilde E$, we apply \Cref{cor:rescaled-anchor} to $\widetilde E$ at $x:=x^\star+\frac{\Lambda_-}{\bar\Lambda}(\xi_--x^\star)$. As $s\downarrow-1/\bar\Lambda$, the rescaled point tends to $\tilde\xi$ in \eqref{eq:env-trial}, which, together with the convexity and lower semicontinuity of $\widetilde{\M}$ in $s$, gives
\begin{equation}\label{eq:env-centered-bound}
\left(1-\frac{\delta_\Lambda^2}{\bar\Lambda^2}\right)\widetilde{\M}_{-1/\bar\Lambda}(x)\ge-\frac{\Lambda_-^2\rho_+^2}{\bar\Lambda^3}\left(\widetilde E\bigl(\tilde{\xi}\bigr)-E^\star\right)+\frac{\delta_\Lambda^2}{2\bar\Lambda^2}\norm{x-x^\star}^2.
\end{equation}
Expanding the defining supremum expresses this envelope in terms of $\M_-$:
\begin{equation}\label{eq:env-centered}
\widetilde{\M}_{-1/\bar\Lambda}(x)
=\frac{\Lambda_-}{\bar\Lambda}\M_-(\xi_-)+\frac{c_\Lambda\Lambda_-}{2\bar\Lambda^2}\norm{\xi_--x^\star}^2,
\end{equation}
and substituting \eqref{eq:energy-shift} and \eqref{eq:env-centered} into \eqref{eq:env-centered-bound} gives
\begin{equation*}
(\bar\Lambda^2-\delta_\Lambda^2)\M_-(\xi_-)+\frac{c_\Lambda^2-\delta_\Lambda^2}{2}\norm{\xi_--x^\star}^2\ge-\Lambda_-\rho_+^2(E(\tilde\xi)-E^\star),
\end{equation*}
which is equivalent to \eqref{eq:env-minus} after simplification.

Thus \eqref{eq:env-minus} holds in both cases, completing the proof when $q_\Lambda>0$.

For the remaining case $q_\Lambda=0$, replace $\mu_\pm$ by $(1-\varepsilon)\mu_\pm$ so that the perturbed $q_\Lambda$ is positive. Applying the preceding result and letting $\varepsilon\downarrow0$ yields \eqref{eq:env-pair}.
\end{proof}

With this lemma, \Cref{thm:dai} follows by applying it to the relative energy.

\begin{proof}[Proof of \Cref{thm:dai}]
First suppose $\rho_\pm>0$.

Fix $x,y,z\in\R^d$ and set $\widehat E:=D_E(\cdot,x)$. Its gradient $\nabla\widehat E=g-g(x)$ has the same secant differences as $g$, so $\widehat E\in\classF_{\mu_\pm,L_\pm}$ with stationary point $x$ and stationary value zero. Write $\widehat{\M}_\pm$ for its envelopes at $1/\Lambda_+$ and $-1/\Lambda_-$, respectively, and take
\begin{equation*}
\xi_+:=x-\nabla_x\Phi_+(x,y)=y-\frac{\nabla\widehat E(y)}{\Lambda_+},\qquad
\xi_-:=x-\nabla_x\Phi_-(x,z)=z+\frac{\nabla\widehat E(z)}{\Lambda_-},
\end{equation*}
which are the unique minimizers given by \Cref{prop:proximal} for $\widehat E$ at $y$ and $z$, respectively. The minimum values \eqref{eq:prox-value} therefore give
\begin{equation*}
\widehat{\M}_+(\xi_+)=\frac{D_E(y,x)}{\Lambda_+}-\frac12\norm{y-\xi_+}^2,\qquad
\widehat{\M}_-(\xi_-)=-\frac{D_E(z,x)}{\Lambda_-}-\frac12\norm{z-\xi_-}^2.
\end{equation*}
Together with $D_E(a,b)+D_E(b,a)=\langle g(a)-g(b),a-b\rangle$, these identities give
\begin{equation*}
\Phi_+(x,y)=\frac12\norm{\xi_+-x}^2+\widehat{\M}_+(\xi_+),\qquad
\Phi_-(x,z)=\frac12\norm{\xi_--x}^2+\widehat{\M}_-(\xi_-).
\end{equation*}
Applying \Cref{lem:two-anchor} to $\widehat E$ at $(\xi_+,\xi_-)$ then establishes \eqref{eq:dai} for positive weights.

If either weight is zero, applying this result with perturbed weights $\rho_\pm+\varepsilon$ and letting $\varepsilon\downarrow0$ yields \eqref{eq:dai}, completing the proof.
\end{proof}

Having established the descent--ascent inequalities on the uniformly nondegenerate function class, we now turn to their converse, showing that they also imply the secant bounds \eqref{eq:secants}.

\begin{theorem}
For $E\in C^1(\R^d)$ satisfying $\DAI(\rho_\pm;\Lambda_\pm)$ with $\rho_+/\Lambda_+=\rho_-/\Lambda_-$,
\begin{equation}\label{eq:secant-balanced}
0\le\left\langle\Delta x-\frac{\Delta g}{\Lambda_+},\;\Delta x+\frac{\Delta g}{\Lambda_-}\right\rangle\le\rho_+\rho_-\norm{\Delta x}^2,\qquad\forall x_1,x_2\in\R^d.
\end{equation}
\end{theorem}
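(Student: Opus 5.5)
The plan is to extract \eqref{eq:secant-balanced} from $\DAI(\rho_\pm;\Lambda_\pm)$ by evaluating \eqref{eq:dai} at well-chosen triples $(x,y,z)$ built from $x_1,x_2$, and then optimizing over a free parameter. Write $\kappa:=\rho_+/\Lambda_+=\rho_-/\Lambda_-$ and encode a pair $x_1,x_2$ by the scalars $\alpha:=\norm{\Delta x}^2$, $\beta:=\langle\Delta g,\Delta x\rangle$ and $\gamma:=\norm{\Delta g}^2$. Then the middle quantity in \eqref{eq:secant-balanced} is
\begin{equation*}
Q:=\alpha+\Bigl(\frac1{\Lambda_-}-\frac1{\Lambda_+}\Bigr)\beta-\frac{\gamma}{\Lambda_+\Lambda_-}.
\end{equation*}

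\textbf{Step 1 (a first upper bound).} Evaluate \eqref{eq:dai} at $(x,y,z)=(x_1,x_2,x_2)$. The Bregman terms enter as $\bigl(-\rho_+/\Lambda_++\rho_-/\Lambda_-\bigr)D_E(x_1,x_2)$, which vanishes by the balance condition $\kappa$. By \eqref{eq:phi-grad}, the right-hand side is exactly $Q$, so we get
\begin{equation*}
Q\le\frac{\rho_++\rho_-}{2}\,\alpha .
\end{equation*}
This is the reason the balance hypothesis is needed. It shows that the diagonal choice gives the arithmetic mean, whereas we want the product $\rho_+\rho_-$. Note that $\rho_+\rho_-\le(\rho_++\rho_-)^2/4$, and the arithmetic mean is not in general dominated by the product, so Step 1 alone is not enough.

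\textbf{Step 2 (the product bound).} To reach $\rho_+\rho_-$, I would decouple the two anchors by taking $y=x_2$ and $z=x+t(x_2-x_1)$, or more generally points on the line through $x_1,x_2$ shifted along $\Delta g$. Then \eqref{eq:dai} yields a family of inequalities whose right-hand sides involve $x^\uparrow-z^\uparrow$ at a different pair. I would first try to combine the DAI at $(x_1,x_2,x_2)$ with the DAI at $(x_2,x_1,x_1)$ and at mixed triples such as $(x_1,x_2,x_1)$. The last one gives $\rho_+\Phi_+(x_1,x_2)\ge0$, hence $D_E(x_1,x_2)\le\frac{\Lambda_+}{2}\alpha$, and symmetrically $D_E\ge-\frac{\Lambda_-}{2}\alpha$. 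Then I would minimize over a scalar $t$ in an AM--GM fashion, $t\rho_+a+t^{-1}\rho_-b\ge2\sqrt{\rho_+\rho_-ab}$. The expected output is a quadratic inequality in $(\alpha,\beta,\gamma)$ whose discriminant condition is precisely $Q\le\rho_+\rho_-\alpha$.

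\textbf{Step 3 (nonnegativity).} For $Q\ge0$ I would mimic the classical cocoercivity derivation. Apply the nonnegativity $\Phi_+\ge0$, $\Phi_-\ge0$ obtained in Step 2 at auxiliary points such as $x_2-\Delta g/\Lambda_+$ and $x_2+\Delta g/\Lambda_-$, and add the results with the roles of $x_1,x_2$ exchanged. The aim is the ordering $\langle\Delta x-\Delta g/\Lambda_+,\Delta x+\Delta g/\Lambda_-\rangle\ge0$, which says that the descent and ascent displacement differences do not point against each other.

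\textbf{Main obstacle.} I expect Step 2 to be the main obstacle: since $E$ is only $C^1$, there is no Hessian to diagonalize, and the product $\rho_+\rho_-$ must come from a genuinely nonlocal choice of the third point $z$. That choice has to make the Bregman terms cancel via $\kappa$ while the inner product scales bilinearly in the free parameter. Step 3 is the second concern: the naive symmetric sum of $\Phi_+\ge0$ only gives $\beta\le\Lambda_+\alpha$, and combining it with $\beta\ge-\Lambda_-\alpha$ does not by itself control $\gamma$.
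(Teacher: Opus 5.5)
\textbf{Step 2 has a genuine gap.} Your Step 1 is correct; it is the paper's ``$y=z$'' observation. But Step 2, which has to carry the whole right inequality, contains no mechanism that actually works.

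Once you introduce $z=x_1+t(x_2-x_1)$, or any other new point, \eqref{eq:dai} involves $g(z)$ and $E(z)$. For a nonlinear $C^1$ function these are not determined by $(\alpha,\beta,\gamma)$, so nothing closes into an inequality for the given pair. The homogeneous scaling in $t$ that your AM--GM step needs is available only when $g$ is affine along the segment, i.e.\ for quadratics.

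Staying on the pair does not help either. Triples in $\{x_1,x_2\}^3$ yield only $\Phi_\pm(x_1,x_2)\ge0$, $\Phi_\pm(x_2,x_1)\ge0$ and Step 1. Set $\rho:=\rho_+\Lambda_-/\bar\Lambda$, and take $\rho\in(0,1)$, the only nontrivial regime. Then the pair data of the quadratic $\frac{\lambda}{2}\norm{x}^2$ with $\lambda:=c_\Lambda+\bar\Lambda\sqrt{1-\rho}$ satisfy all of these constraints, with Step 1 holding with equality, and also satisfy $Q\ge0$. Yet $Q=\frac{\rho_++\rho_-}{2}\norm{\Delta x}^2>\rho_+\rho_-\norm{\Delta x}^2$. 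So the ``quadratic inequality in $(\alpha,\beta,\gamma)$'' you expect does not exist for a fixed pair.

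The missing idea is a way to manufacture the linearity that makes AM--GM legitimate. The paper proves the bound globally rather than pair by pair:
\begin{itemize}
\item With $f:=E-\frac{c_\Lambda}{2}\norm{\cdot}^2$, Step 1 and the left inequality make $\nabla f$ bi-Lipschitz. The claim becomes $K:=\Lip(\nabla f^\sharp)\le 1/(\bar\Lambda\sqrt{1-\rho^2})$.
\item It writes $K$ as the supremum of $\abs{\langle\nabla f^\sharp(p_1)-\nabla f^\sharp(p_2),p_1-p_2\rangle}/\norm{p_1-p_2}^2$ and takes an $\varepsilon$-near-maximizing pair.
\item Using only the $K$-Lipschitz bound, it shows that $\nabla f^\sharp$ stays within $O(\sqrt\varepsilon)\norm{p_1-p_2}$ of the affine map $p\mapsto\nabla f^\sharp(p_2)+\sigma K(p-p_2)$ on the whole segment, where $\sigma$ is a sign.
\item Only then does DAI, evaluated at the preimages of three points of that segment, reduce to a quadratic form in the positions of $y$ and $z$. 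Your AM--GM optimization then gives $\rho K\ge\sqrt{K^2-\bar\Lambda^{-2}}$.
\end{itemize}

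\textbf{Step 3 needs the exact construction.} The idea is right, but it must be carried out precisely. Eliminate $E(\xi)$ by combining $D_E(\xi,x_1)\ge-\frac{\Lambda_-}{2}\norm{\xi-x_1}^2$ with $D_E(\xi,x_2)\le\frac{\Lambda_+}{2}\norm{\xi-x_2}^2$. Then use the exact minimizer $\xi^\ast=x_1+(\Delta g-\Lambda_+\Delta x)/(\Lambda_++\Lambda_-)$, and do the same with the roles of $x_1,x_2$ exchanged. Any other $\xi$ adds $\frac{\Lambda_++\Lambda_-}{2}\norm{\xi-\xi^\ast}^2$ to the bound. The points you list are not of this form, so they give a strictly weaker conclusion than $Q\ge0$ in general.

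\textbf{The zero-weight case is missing.} By balance, the weights are either both positive or both zero. In the zero case, DAI at $(x_1,x_2,x_1)$ is vacuous, so $\Phi_\pm\ge0$ is unavailable and your Step 3 has nothing to start from. The paper treats this case separately:
\begin{itemize}
\item Taking $y=z$ gives $\norm{\nabla f(x_1)-\nabla f(x_2)}\ge\bar\Lambda\norm{\Delta x}$.
\item Invariance of domain then makes $\nabla f$ onto.
\item Adding DAI at $(x_1,x_2,z)$ and $(x_1,z,x_2)$, with $\nabla f(z)=2\nabla f(x_1)-\nabla f(x_2)$, shows that $\nabla f$ is $\bar\Lambda$-Lipschitz, which is exactly $Q\ge0$.
\end{itemize}
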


\begin{proof}
We first prove the left inequality in \eqref{eq:secant-balanced}.

For positive weights $\rho_\pm>0$, taking $z=x$ and $y=x$ in \eqref{eq:dai}, respectively, gives $\Phi_+(x,y)\ge0$ and $\Phi_-(x,z)\ge0$, or equivalently,
\begin{equation}\label{eq:dai-models}
-\frac{\Lambda_-}{2}\norm{u-v}^2\le D_E(u,v)\le\frac{\Lambda_+}{2}\norm{u-v}^2,\qquad \forall u,v\in\R^d.
\end{equation}
Therefore, for any $x_1,x_2\in\R^d$ and $\xi\in\R^d$, we have
\begin{align*}
E(\xi)-E(x_1)-\langle g(x_1),\xi-x_1\rangle&\ge-\frac{\Lambda_-}{2}\norm{\xi-x_1}^2,\\
E(\xi)-E(x_2)-\langle g(x_2),\xi-x_2\rangle&\le\frac{\Lambda_+}{2}\norm{\xi-x_2}^2.
\end{align*}
Eliminating $E(\xi)$ gives $D_E(x_1,x_2)\le\frac{\Lambda_-}{2}\norm{\xi-x_1}^2+\frac{\Lambda_+}{2}\norm{\xi-x_2}^2-\langle\Delta g,\xi-x_1\rangle$, and minimizing this right-hand side over $\xi$ gives
\begin{equation*}
D_E(x_1,x_2)\le\frac{\Lambda_+}{2}\norm{\Delta x}^2-\frac{\norm{\Delta g-\Lambda_+\Delta x}^2}{2(\Lambda_++\Lambda_-)}.
\end{equation*}
The same inequality holds for $D_E(x_2,x_1)$ after exchanging the arguments, and adding them gives
\begin{equation*}
D_E(x_1,x_2)+D_E(x_2,x_1)\le\Lambda_+\norm{\Delta x}^2-\frac{\norm{\Delta g-\Lambda_+\Delta x}^2}{\Lambda_++\Lambda_-}.
\end{equation*}
Together with $\langle\Delta g,\Delta x\rangle=D_E(x_1,x_2)+D_E(x_2,x_1)$, this simplifies to
\begin{equation}\label{eq:dai-outer}
0\le\left\langle\Lambda_+\Delta x-\Delta g,\,\Lambda_-\Delta x+\Delta g\right\rangle=\bar\Lambda^2\norm{\Delta x}^2-\norm{\Delta g-c_\Lambda\Delta x}^2,
\end{equation}
which proves the left inequality in \eqref{eq:secant-balanced} when both weights are positive.

For the zero-weight case $\rho_+=\rho_-=0$, introduce the centered energy
\begin{equation}\label{eq:f}
f(\xi):=E(\xi)-\frac{c_\Lambda}{2}\norm{\xi}^2.
\end{equation}
Taking $y=z$ in \eqref{eq:dai} gives $\norm{\nabla f(x)-\nabla f(y)}\ge\bar\Lambda\norm{x-y}$, which makes $\nabla f$ injective with closed image. Since invariance of domain also makes the image open, $\nabla f$ is a global homeomorphism with a $1/\bar\Lambda$-Lipschitz inverse.

For any $x_1,x_2\in\R^d$, we can therefore choose $z$ such that $\nabla f(z)=2\nabla f(x_1)-\nabla f(x_2)$. Adding \eqref{eq:dai} at $(x_1,x_2,z)$ and $(x_1,z,x_2)$ gives
\begin{equation*}
\norm{\nabla f(x_1)-\nabla f(x_2)}^2\le\bar\Lambda^2\langle x_1-x_2,z-x_1\rangle,
\end{equation*}
while the inverse Lipschitz bound gives
\begin{equation*}
\norm{z-x_1}\le\norm{\nabla f(z)-\nabla f(x_1)}/\bar\Lambda=\norm{\nabla f(x_1)-\nabla f(x_2)}/\bar\Lambda.
\end{equation*}
Combining these inequalities by Cauchy--Schwarz implies that $\nabla f$ is $\bar\Lambda$-Lipschitz continuous, which is equivalent to \eqref{eq:dai-outer}.

Consequently, we have established \eqref{eq:dai-outer}, and thus the left inequality in \eqref{eq:secant-balanced}, for all admissible weights $\rho_\pm\ge0$. We now turn to the right inequality.

If $\rho_+\rho_-\Lambda_+\Lambda_-\ge\bar\Lambda^2$, then it follows directly from \eqref{eq:dai-outer}. We therefore consider $\rho_+\rho_-\Lambda_+\Lambda_-<\bar\Lambda^2$, and set
\begin{equation*}
\rho:=\frac{\rho_+\Lambda_-}{\bar\Lambda}=\frac{\rho_-\Lambda_+}{\bar\Lambda}\in[0,1).
\end{equation*}
With the centered energy $f$ in \eqref{eq:f}, the identity $\rho^2\bar\Lambda^2=\rho_+\rho_-\Lambda_+\Lambda_-$ and \eqref{eq:dai-outer} show that the right inequality in \eqref{eq:secant-balanced} reduces to
\begin{equation}\label{eq:f-gap}
\norm{\nabla f(x_1)-\nabla f(x_2)}\overset{?}{\ge}\bar\Lambda\sqrt{1-\rho^2}\,\norm{x_1-x_2},\qquad\forall x_1,x_2\in\R^d.
\end{equation}
To prove this, introduce $\Psi_\pm(x_1,x_2):=\frac12\norm{x_1-x_2}^2\mp\frac{D_f(x_1,x_2)}{\bar\Lambda}=\frac{\Lambda_\pm}{\bar\Lambda}\Phi_\pm(x_1,x_2)$, so that the DAI condition \eqref{eq:dai} becomes
\begin{equation}\label{eq:f-dai}
\rho\bigl[\Psi_+(x,y)+\Psi_-(x,z)\bigr]\ge\left\langle\nabla_x\Psi_+(x,y),\nabla_x\Psi_-(x,z)\right\rangle,
\end{equation}
which means $f$ satisfies $\DAI(\rho;\bar\Lambda)$. Taking $y=z$ in \eqref{eq:f-dai} cancels the Bregman differences and gives
\begin{equation*}
\norm{\nabla f(x_1)-\nabla f(x_2)}\ge\bar\Lambda\sqrt{1-\rho}\,\norm{\Delta x},\qquad\forall x_1,x_2\in\R^d.
\end{equation*}
Together with the $\bar\Lambda$-Lipschitz bound \eqref{eq:dai-outer}, this gives, by \Cref{prop:geometry}, the Legendre dual $f^\sharp$ with $\nabla f^\sharp=(\nabla f)^{-1}$ and
\begin{equation*}
\frac{1}{\bar\Lambda}\le K:=\Lip(\nabla f^\sharp)\le\frac{1}{\bar\Lambda\sqrt{1-\rho}}.
\end{equation*}
The target \eqref{eq:f-gap} therefore becomes
\begin{equation}\label{eq:dual-lip}
K\overset{?}{\le}\frac{1}{\bar\Lambda\sqrt{1-\rho^2}}.
\end{equation}

To prove \eqref{eq:dual-lip}, we first characterize the Lipschitz constant by
\begin{equation}\label{eq:dual-sup}
K=\sup_{p_1\ne p_2}\frac{\abs{\langle\nabla f^\sharp(p_1)-\nabla f^\sharp(p_2),p_1-p_2\rangle}}{\norm{p_1-p_2}^2}.
\end{equation}
Indeed, Cauchy--Schwarz bounds the supremum by $K$. Conversely, let $C$ be any upper bound for the supremum. Integration along the segment from $p_2$ to $p_1$ then yields
\begin{equation*}
\abs*{D_{f^\sharp}(p_1,p_2)}\le\frac C2\norm{p_1-p_2}^2,\qquad\forall p_1,p_2\in\R^d.
\end{equation*}
The same argument that derives \eqref{eq:dai-outer} from \eqref{eq:dai-models}, applied to $f^\sharp$, yields
\begin{equation*}
\norm{\nabla f^\sharp(p_1)-\nabla f^\sharp(p_2)}\le C\norm{p_1-p_2},
\end{equation*}
and thus $K\le C$ for every such $C$, which proves \eqref{eq:dual-sup}.

We now approach the supremum in \eqref{eq:dual-sup} approximately. For every $0<\varepsilon<K$, choose $p_1,p_2\in\R^d$ with $\Delta p:=p_1-p_2\ne0$ such that $\abs*{\langle\nabla f^\sharp(p_1)-\nabla f^\sharp(p_2),\Delta p\rangle}\ge(K-\varepsilon)\norm{\Delta p}^2$. With $\sigma$ denoting the sign, we have
\begin{equation}\label{eq:dual-near-max}
\sigma\langle\nabla f^\sharp(p_1)-\nabla f^\sharp(p_2),\Delta p\rangle\ge(K-\varepsilon)\norm{\Delta p}^2.
\end{equation}

For $\alpha\in[0,1]$, using $K$-Lipschitz continuity between $p_1$ and $p_2+\alpha\Delta p$, together with \eqref{eq:dual-near-max}, gives
\begin{equation}\label{eq:dual-slope-lower}
\begin{aligned}
&\sigma\langle\nabla f^\sharp(p_2+\alpha\Delta p)-\nabla f^\sharp(p_2),\Delta p\rangle\\
=\;&\sigma\langle\nabla f^\sharp(p_1)-\nabla f^\sharp(p_2),\Delta p\rangle-\sigma\langle \nabla f^\sharp(p_1)-\nabla f^\sharp(p_2+\alpha\Delta p),\Delta p\rangle\\
\ge\;&(K-\varepsilon)\norm{\Delta p}^2-K(1-\alpha)\norm{\Delta p}^2=(K\alpha-\varepsilon)\norm{\Delta p}^2.
\end{aligned}
\end{equation}
Also, by $K$-Lipschitz continuity between $p_2$ and $p_2+\alpha\Delta p$,
\begin{equation}\label{eq:dual-segment}
\norm{\nabla f^\sharp(p_2+\alpha\Delta p)-\nabla f^\sharp(p_2)}^2\le K^2\alpha^2\norm{\Delta p}^2.
\end{equation}
Therefore, \eqref{eq:dual-slope-lower} and \eqref{eq:dual-segment} give
\begin{equation}\label{eq:dual-grad-error}
\begin{aligned}
&\norm{\nabla f^\sharp(p_2+\alpha\Delta p)-\nabla f^\sharp(p_2)-\sigma K\alpha\Delta p}^2\\
\le\;&2K^2\alpha^2\norm{\Delta p}^2-2K\alpha(K\alpha-\varepsilon)\norm{\Delta p}^2=2K\alpha\varepsilon\norm{\Delta p}^2,
\end{aligned}
\end{equation}
and thus $\nabla f^\sharp(p_2+\alpha\Delta p)-\nabla f^\sharp(p_2)=\sigma K\alpha\Delta p+O(\sqrt{\varepsilon})\norm{\Delta p}$.
Integrating along the segment gives the estimate uniformly for $\alpha\in[0,1]$:
\begin{equation}\label{eq:dual-bregman-error}
D_{f^\sharp}(p_2+\alpha\Delta p,p_2)=\frac{1}{2}\sigma K\alpha^2\norm{\Delta p}^2+O(\sqrt{\varepsilon})\norm{\Delta p}^2.
\end{equation}

For $\beta_+,\beta_-\in[0,1]$, apply the DAI condition \eqref{eq:f-dai} at the inverse images of three points on this segment:
\begin{equation*}
x:=\nabla f^\sharp(p_2),\qquad y:=\nabla f^\sharp(p_2+\beta_+\Delta p),\qquad z:=\nabla f^\sharp(p_2+\beta_-\Delta p).
\end{equation*}
Their gradient differences are $\nabla f(y)-\nabla f(x)=\beta_+\Delta p$ and $\nabla f(z)-\nabla f(x)=\beta_-\Delta p$, respectively, while Bregman reciprocity \eqref{eq:bregman-duality} gives $D_f(x,y)=D_{f^\sharp}(p_2+\beta_+\Delta p,p_2)$ and the corresponding identity for $z$. Substituting these relations into \eqref{eq:f-dai} and using \eqref{eq:dual-grad-error} and \eqref{eq:dual-bregman-error} gives
\begin{equation*}
\frac{\rho}{2}\left[\left(K^2-\frac{\sigma K}{\bar\Lambda}\right)\beta_+^2+\left(K^2+\frac{\sigma K}{\bar\Lambda}\right)\beta_-^2\right]\ge\left(K^2-\frac{1}{\bar\Lambda^2}\right)\beta_+\beta_--O(\sqrt\varepsilon).
\end{equation*}
With $\beta_\pm=\sqrt{\frac12\pm\frac{\sigma}{2K\bar\Lambda}}$, this reduces to
\begin{equation*}
\frac{\rho}{2}\left(K^2-\frac{1}{\bar\Lambda^2}\right)\ge\frac{\left(K^2-\bar\Lambda^{-2}\right)^{3/2}}{2K}-O(\sqrt\varepsilon).
\end{equation*}
Passing to the limit $\varepsilon\downarrow0$, we conclude that
\begin{equation*}
\rho K\ge\sqrt{K^2-\frac{1}{\bar\Lambda^2}},\quad\text{or equivalently, }\quad K\le\frac{1}{\bar\Lambda\sqrt{1-\rho^2}}.
\end{equation*}
This proves \eqref{eq:dual-lip}, hence \eqref{eq:f-gap} and the right inequality in \eqref{eq:secant-balanced}.
\end{proof}

Applying this result to \eqref{eq:dai-class} recovers the original secants.

\begin{proof}[Proof of \Cref{thm:characterization}]
Both conditions in \eqref{eq:dai-class} satisfy weight balance. Applying the left inequality of \eqref{eq:secant-balanced} to the first condition gives
\begin{equation*}
0\le\langle L_+\Delta x-\Delta g,L_-\Delta x+\Delta g\rangle=\bar L^2\norm{\Delta x}^2-\norm{\Delta g-c_L\Delta x}^2,
\end{equation*}
which is exactly \eqref{eq:secant-outer}. For the second DAI condition, $c_\Lambda=c_\mu$, $\bar\Lambda=\bar\mu+w$, and $\rho_+\rho_-\Lambda_+\Lambda_-=w(w+2\bar\mu)$, so the right inequality in \eqref{eq:secant-balanced} gives
\begin{equation*}
(\bar\mu+w)^2\norm{\Delta x}^2-\norm{\Delta g-c_\mu\Delta x}^2\le w(w+2\bar\mu)\norm{\Delta x}^2,
\end{equation*}
which is exactly \eqref{eq:secant-inner}. Thus $E\in\classF_{\mu_\pm,L_\pm}(\R^d)$.
\end{proof}

Together, \Cref{thm:dai,thm:characterization} show that the two DAI conditions in \eqref{eq:dai-class} characterize $\classF_{\mu_\pm,L_\pm}(\R^d)$.

\section{Alternating Descent Ascent}\label{sec:ada}

We now turn to the convergence analysis of \ADA{} (\Cref{alg:ada}), which alternates between gradient descent and gradient ascent.

To gain intuition for why this works, we start by considering the quadratic function $E(x)=\lambda \norm{x}^2/2$, where $\lambda\in\mathcal{S}=[-L_-,-\mu_-]\cup[\mu_+,L_+]$. With prescribed step sizes $\tau_\pm=1/\Lambda_\pm>0$, a pair of descent and ascent steps transforms the iterate $x$ as
\begin{equation*}
x\xRightarrow[]{\;\text{descent}\;}\left(1-\frac{\lambda}{\Lambda_+}\right)x\xRightarrow[]{\;\text{ascent}\;}\left(1+\frac{\lambda}{\Lambda_-}\right)\left(1-\frac{\lambda}{\Lambda_+}\right)x,
\end{equation*}
which also accounts for the definition of $q_\Lambda$ in \eqref{eq:q}. In particular, assuming
\begin{equation}\label{eq:ada-scales}
\Lambda_\pm\ge L_\pm,\qquad -\mu_-<\Lambda_+-\Lambda_-<\mu_+
\end{equation}
ensures that the multiplier lies in $[0,1)$ for all $\lambda\in\mathcal{S}$, and the iterates converge to the stationary point.
For the remainder of this section, we adopt the assumption \eqref{eq:ada-scales} on the scale parameters $\Lambda_\pm$ and turn to the analysis of \ADA{} for general $E\in\classF_{\mu_\pm,L_\pm}(\R^d)$.

\subsection{Variational Structure and Convergence}\label{sec:variational}

\ADA{} admits a simple variational structure that underlies its convergence. By introducing the potential
\begin{equation}\label{eq:v}
V(y,z):=\frac12\norm{y-z}^2+\M_-(y)+\M_+(z),
\end{equation}
with $\M_+:=\M_{\tau_+}$ and $\M_-:=\M_{-\tau_-}$, we can express the descent and ascent steps using \Cref{prop:proximal} as
\begin{equation}\label{eq:v-minimizers}
y^\downarrow=\operatorname*{argmin}_z\left\{\frac12\norm{y-z}^2+\M_+(z)\right\},\;\;
z^\uparrow=\operatorname*{argmin}_y\left\{\frac12\norm{y-z}^2+\M_-(y)\right\},
\end{equation}
and thus the alternating steps can be interpreted as alternating minimization of the potential $V$ over its two coordinates:
\begin{equation*}
V(y_t,z_{t-1})
\xRightarrow[\quad z_t:=y_t^\downarrow\quad]{\min_z}
V(y_t,z_t)
\xRightarrow[\quad y_{t+1}:=z_t^\uparrow\quad]{\min_y}
V(y_{t+1},z_t)
\Longrightarrow\cdots
\end{equation*}

\begin{remark}
The potential $V$ in \eqref{eq:v} also reveals a connection with difference-of-convex (DC) programming~\cite{lethi2018dc}.
Indeed, by \eqref{eq:env},
\begin{equation*}
f_1(y):=\M_-(y)+\frac12\norm{y}^2=\sup_{\xi\in\R^d}\left\{\langle y,\xi\rangle-\frac12\norm{\xi}^2-\tau_-(E(\xi)-E^\star)\right\}
\end{equation*}
is convex, while \eqref{eq:primal-models} implies that $f_2(y):=\frac12\norm{y}^2-\tau_+(E(y)-E^\star)$ is also convex. The potential can then be expressed as
\begin{equation*}
V(y,z)=f_1(y)+f_2^*(z)-\langle y,z\rangle,
\end{equation*}
and minimizing over $z$ yields the reduced DC objective $\min_z V(y,z)=f_1(y)-f_2(y)$. Since $f_1^*(z)=\frac12\norm{z}^2+\tau_-(E(z)-E^\star)$, the standard DC algorithm~\cite{pham1997convex} takes the form
\begin{equation*}
\begin{aligned}
z_t&=\nabla f_2(y_t)=y_t-\tau_+g(y_t),\\
y_{t+1}&=\nabla f_1^*(z_t)=z_t+\tau_-g(z_t),
\end{aligned}
\end{equation*}
which precisely recovers the \ADA{} updates \eqref{eq:v-minimizers}.
\end{remark}

Moreover, the potential satisfies the following variational property.
\begin{proposition}
$V(y,z)\ge0$, with equality if and only if $y=z=x^\star$.
\end{proposition}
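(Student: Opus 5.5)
The plan is to derive the inequality from \Cref{lem:two-anchor} with weights close to one, and to get the equality case from the slack those weights leave. With $\Lambda_\pm:=1/\tau_\pm$, the envelopes in \eqref{eq:v} are exactly $\M_+=\M_{1/\Lambda_+}$ and $\M_-=\M_{-1/\Lambda_-}$, as in \Cref{lem:two-anchor}. Apply the lemma with $\xi_+:=z$ and $\xi_-:=y$ and, provisionally, $\rho_+=\rho_-=1$. The identity
\begin{equation*}
\tfrac12\norm{z-x^\star}^2+\tfrac12\norm{y-x^\star}^2-\langle z-x^\star,y-x^\star\rangle=\tfrac12\norm{y-z}^2
\end{equation*}
then turns \eqref{eq:env-pair} into precisely $V(y,z)\ge0$. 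The first task is therefore to check that $\rho_\pm=1$ satisfies \eqref{eq:dai-conditions} under \eqref{eq:ada-scales}, and in fact with strict inequalities.

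\emph{Checking the conditions.} The requirement $\Lambda_\pm\ge L_\pm$ is assumed. By \eqref{eq:ada-scales}, $c_\Lambda=(\Lambda_+-\Lambda_-)/2$ lies in $(-\mu_-/2,\mu_+/2)$. Hence
\begin{equation*}
\delta_\Lambda=\min\{\mu_+-c_\Lambda,\;\mu_-+c_\Lambda\}>\abs{c_\Lambda}.
\end{equation*}
Since $\bar\Lambda-\Lambda_\mp=\pm c_\Lambda$, this gives $(\bar\Lambda-\delta_\Lambda)/\Lambda_\mp<1$. For the product condition, recall from the quadratic discussion that \eqref{eq:ada-scales} places every multiplier $(1-\lambda/\Lambda_+)(1+\lambda/\Lambda_-)$, $\lambda\in\mathcal S$, in $[0,1)$. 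As $\mathcal S$ is compact, the maximum $q_\Lambda$ in \eqref{eq:q} is attained, so $q_\Lambda<1$. Both conditions therefore hold strictly, and they remain valid for $\rho_+=\rho_-=1-\varepsilon$ with $\varepsilon>0$ small.

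\emph{Using the slack.} Apply the lemma with $\rho_\pm=1-\varepsilon$ and set $B_\pm(\xi):=\tfrac12\norm{\xi-x^\star}^2+\M_\pm(\xi)$. This yields
\begin{equation*}
V(y,z)\ge\varepsilon\bigl[B_+(z)+B_-(y)\bigr].
\end{equation*}
It then suffices to show $B_\pm(\xi)>0$ for $\xi\ne x^\star$. Testing the supremum \eqref{eq:env} at a point $\eta$ gives
\begin{equation*}
B_\pm(\xi)\ge\pm\tau_\pm\bigl(E(\eta)-E^\star\bigr)+\langle\eta-x^\star,\xi-x^\star\rangle-\tfrac12\norm{\eta-x^\star}^2.
\end{equation*}
The right-hand side vanishes at $\eta=x^\star$. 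Its $\eta$-gradient there is $\xi-x^\star\neq0$, because $g(x^\star)=0$. So moving $\eta$ a small distance from $x^\star$ along $\xi-x^\star$ makes it strictly positive, and hence $V(y,z)>0$ unless $y=z=x^\star$.

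\emph{Converse and main obstacle.} For the converse, $J_sx^\star=x^\star$ because $g(x^\star)=0$, so $\M_s(x^\star)=0$ for $s$ in the interior of $\I$, which gives $V(x^\star,x^\star)=0$. The main obstacle is confirming strict admissibility of $\rho_\pm=1$, especially the claim $q_\Lambda<1$. If the quadratic intuition needs backing, I would verify it directly. Each factor $1-\lambda/\Lambda_+$ and $1+\lambda/\Lambda_-$ is nonnegative on $\mathcal S$ because $\Lambda_\pm\ge L_\pm$. The product equals $1+\lambda\bigl[(\Lambda_+-\Lambda_-)-\lambda\bigr]/(\Lambda_+\Lambda_-)$, and the bracketed term is negative on both intervals of $\mathcal S$ exactly by the strict bounds $-\mu_-<\Lambda_+-\Lambda_-<\mu_+$.
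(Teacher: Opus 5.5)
\paragraph{Comparison with the paper's proof.}
Your proof of $V\ge0$ is the paper's: \Cref{lem:two-anchor} with $\rho_\pm=1$ at $\xi_+=z$, $\xi_-=y$. Your check that $\delta_\Lambda>\abs{c_\Lambda}$ and $q_\Lambda<1$ under \eqref{eq:ada-scales} is correct and more explicit than the paper's.

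The equality case is where you differ. The paper argues through the minimizer structure. If $V(y,z)=0$, then $(y,z)$ globally minimizes $V$, so uniqueness of the minimizers in \eqref{eq:v-minimizers} forces $z=y^\downarrow$ and $y=z^\uparrow$. Hence $g(y)-g(z)=(\Lambda_+-\Lambda_-)(y-z)$. The inner secant \eqref{eq:secant-inner}, together with $\abs{\Lambda_+-\Lambda_--c_\mu}<\bar\mu$, then gives $y=z$ and so $g(y)=\Lambda_+(y-z)=0$.

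You instead spend the strict slack in the weights. With $B_\pm(\xi):=\frac12\norm{\xi-x^\star}^2+\M_\pm(\xi)$, you get $V(y,z)\ge\varepsilon\bigl[B_+(z)+B_-(y)\bigr]$. You then show that $B_\pm$ vanishes only at $x^\star$ by a first-order test of \eqref{eq:env}.

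Both routes use the strictness of $-\mu_-<\Lambda_+-\Lambda_-<\mu_+$, but at different points. The paper's route identifies zeros of $V$ with fixed points of the \ADA{} cycle. It also previews the secant estimate that later yields $\norm{y_t-z_t}\to0$. Yours bypasses the minimizer structure and gives a quantitative growth bound at no extra cost. Testing \eqref{eq:env} against the lower bound in \eqref{eq:primal-models} gives $B_+(\xi)\ge\norm{\xi-x^\star}^2/(2(1+\tau_+L_-))$ and $B_-(\xi)\ge\norm{\xi-x^\star}^2/(2(1+\tau_-L_+))$. So $V(y,z)$ dominates a fixed multiple of $\norm{y-x^\star}^2+\norm{z-x^\star}^2$.

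\paragraph{A repair in the converse.}
You derive $\M_s(x^\star)=0$ via $J_sx^\star=x^\star$, which is valid only for $s$ in the interior of $\I$. However, \eqref{eq:ada-scales} allows $\Lambda_\pm=L_\pm$, which puts $\tau_+$ or $-\tau_-$ at an endpoint of $\I$, where $J_s$ is not defined. This is the typical case in the paper: the SCSC benchmark, the spiral example, and the optimal scales $\Lambda_\pm=\mu_\pm+w$ all use it.

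The fix is immediate. \Cref{prop:proximal} at $x=x^\star$ holds for every $s\in\I$ and gives $\M_s(x^\star)=0$ directly; this is what the paper uses. Alternatively, \eqref{eq:primal-models} gives $s(E(\xi)-E^\star)\le\frac12\norm{\xi-x^\star}^2$ for all $s\in\I$, so $\M_s(x^\star)\le0$, and testing the supremum at $\xi=x^\star$ gives the reverse inequality.
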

\begin{proof}
By admissibility \eqref{eq:ada-scales}, $\delta_\Lambda>\abs{c_\Lambda}$ and $q_\Lambda<1$, so \Cref{lem:two-anchor} with weights $\rho_+=\rho_-=1$ at anchors $\xi_+=z$ and $\xi_-=y$ gives
\begin{equation*}
\frac12\norm{z-x^\star}^2+\M_+(z)+\frac12\norm{y-x^\star}^2+\M_-(y)\ge\langle z-x^\star,y-x^\star\rangle,\qquad\forall y,z\in\R^d,
\end{equation*}
which is exactly $V(y,z)\ge0$.

When $V(y,z)=0$, the pair $(y,z)$ minimizes $V$, so \eqref{eq:v-minimizers} gives $z=y^\downarrow$ and $y=z^\uparrow$. Consequently, $g(y)-g(z)=(\Lambda_+-\Lambda_-)(y-z)$, and \eqref{eq:secant-inner} implies
\begin{equation*}
0\ge\bigl(\bar\mu-\abs{\Lambda_+-\Lambda_--c_\mu}\bigr)\norm{y-z}.
\end{equation*}
By \eqref{eq:ada-scales}, the coefficient is positive, so $y=z$, yielding $g(y)=\Lambda_+(y-z)=0$, and hence $y=z=x^\star$.
Conversely, \Cref{prop:proximal} at $x=x^\star$ gives $\M_\pm(x^\star)=0$ and hence $V(x^\star,x^\star)=0$.
\end{proof}

To quantify the decrease of $V$ under the coordinate updates, we show the following estimates.

\begin{lemma}\label{lem:single-step}
For every $x,y,z\in\R^d$,
\begin{equation}\label{eq:phi-bound}
\Phi_+(x,y)\ge\frac{\norm{\nabla_x\Phi_+(x,y)}^2}{2(1+\tau_+/\tau_-)},\qquad \Phi_-(x,z)\ge\frac{\norm{\nabla_x\Phi_-(x,z)}^2}{2(1+\tau_-/\tau_+)}.
\end{equation}
\end{lemma}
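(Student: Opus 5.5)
The plan is to write each $\Phi_\pm$ as a Bregman difference of an auxiliary convex function with Lipschitz gradient, and then apply the standard "descent-step" lower bound for such functions.

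\textbf{Setup.} Recall that in this section $\tau_\pm=1/\Lambda_\pm$ with $\Lambda_\pm\ge L_\pm$ by \eqref{eq:ada-scales}. The targets become
\[
\Phi_+\ge\frac{\Lambda_+\norm{\nabla_x\Phi_+}^2}{2(\Lambda_++\Lambda_-)},
\qquad
\Phi_-\ge\frac{\Lambda_-\norm{\nabla_x\Phi_-}^2}{2(\Lambda_++\Lambda_-)}.
\]
Introduce
\[
h_+(x):=\tfrac12\norm{x}^2-\tfrac{1}{\Lambda_+}E(x),
\qquad
h_-(x):=\tfrac12\norm{x}^2+\tfrac{1}{\Lambda_-}E(x).
\]
Then $\Phi_+(x,y)=D_{h_+}(x,y)$ and $\Phi_-(x,z)=D_{h_-}(x,z)$. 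Moreover $\nabla h_+(x)=x^\downarrow$ and $\nabla h_-(x)=x^\uparrow$, consistent with \eqref{eq:phi-grad}.

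\textbf{Step 1: two-sided quadratic bounds.} By \eqref{eq:primal-models},
\[
0\le\Bigl(1-\tfrac{L_+}{\Lambda_+}\Bigr)\tfrac12\norm{u-v}^2\le D_{h_+}(u,v)\le\Bigl(1+\tfrac{L_-}{\Lambda_+}\Bigr)\tfrac12\norm{u-v}^2.
\]
Since $L_-\le\Lambda_-$, this gives $0\le D_{h_+}(u,v)\le\frac{\ell_+}{2}\norm{u-v}^2$ with $\ell_+:=1+\Lambda_-/\Lambda_+=1+\tau_+/\tau_-$. Symmetrically, $0\le D_{h_-}(u,v)\le\frac{\ell_-}{2}\norm{u-v}^2$ with $\ell_-:=1+\Lambda_+/\Lambda_-=1+\tau_-/\tau_+$.

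\textbf{Step 2: the descent-step argument.} Fix $y$ and set $\varphi(u):=D_{h_+}(u,y)$. By Step 1, $\varphi\ge0$. Since $\varphi$ differs from $h_+$ by an affine function, $D_\varphi=D_{h_+}\le\frac{\ell_+}{2}\norm{\cdot}^2$, and $\nabla\varphi(x)=\nabla_x\Phi_+(x,y)$. Evaluate at the gradient step $u:=x-\nabla\varphi(x)/\ell_+$:
\[
0\le\varphi(u)\le\varphi(x)+\langle\nabla\varphi(x),u-x\rangle+\frac{\ell_+}{2}\norm{u-x}^2=\varphi(x)-\frac{\norm{\nabla\varphi(x)}^2}{2\ell_+}.
\]
This is exactly the first inequality in \eqref{eq:phi-bound}. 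The second follows identically, using $h_-$ and $\ell_-$.

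This argument needs only $C^1$ regularity, with no Hessians involved. The one point requiring care is Step 1: the smoothness constant of $h_+$ comes from the \emph{lower} bound $-\frac{L_-}{2}\norm{\cdot}^2$ in \eqref{eq:primal-models}, and it is the hypothesis $L_-\le\Lambda_-$ that converts it into the stated constant $1+\tau_+/\tau_-$.
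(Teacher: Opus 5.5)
Your proof is correct and follows essentially the paper's argument: the paper likewise bounds the Bregman difference of $\Phi_+(\cdot,y)$ by $\frac{1+\tau_+/\tau_-}{2}\norm{\xi-x}^2$ via the lower bound in \eqref{eq:primal-models}, uses $\Phi_+\ge0$, and evaluates at the gradient step $\xi=x-\nabla_x\Phi_+(x,y)/(1+\tau_+/\tau_-)$. It then obtains the second inequality by replacing $E$ with $-E$. Your explicit remark that the constant needs $L_-\le\Lambda_-$, and that nonnegativity needs $L_+\le\Lambda_+$, makes visible a use of \eqref{eq:ada-scales} that the paper leaves implicit.
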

\begin{proof}
With $y$ fixed, \eqref{eq:primal-models} implies that the Bregman difference of $\Phi_+(\cdot,y)$ satisfies
\begin{equation*}
D_{\Phi_+(\cdot,y)}(\xi,x)=\frac12\norm{\xi-x}^2-\tau_+D_E(\xi,x)\le\frac{1+\tau_+/\tau_-}{2}\norm{\xi-x}^2.
\end{equation*}
Since $\Phi_+(\xi,y)\ge0$, expanding this Bregman difference gives
\begin{equation*}
\Phi_+(x,y)\ge-\langle\nabla_x\Phi_+(x,y),\xi-x\rangle-\frac{1+\tau_+/\tau_-}{2}\norm{\xi-x}^2,\qquad\forall\xi\in\R^d.
\end{equation*}
Choosing $\xi=x-\nabla_x\Phi_+(x,y)/(1+\tau_+/\tau_-)$ gives the first inequality in \eqref{eq:phi-bound}; replacing $E$ by $-E$ and exchanging the scales gives the second likewise.
\end{proof}

In the following discussion, we adopt the convention of starting \ADA{} with the larger step size. Specifically, if $\tau_+\ge\tau_-$, we start with a descent step, and if $\tau_+<\tau_-$, we start with an ascent step.
Therefore, without loss of generality, replacing $E$ by $-E$ and exchanging the $+$ and $-$ parameters if necessary, we may assume $\tau_+\ge\tau_-$ and write the descent-first \ADA{} trajectory as
\begin{equation}\label{eq:ada}
z_t=y_t-\tau_+g(y_t),\qquad y_{t+1}=z_t+\tau_-g(z_t),\qquad y_0=x_0,\qquad t=0,1,\ldots.
\end{equation}
The alternating minimization structure then gives the monotone decrease:
\begin{equation*}
V(y_1,z_0)\ge V(y_1,z_1)\ge V(y_2,z_1)\ge V(y_2,z_2)\ge\cdots\ge0,
\end{equation*}
while \Cref{prop:proximal} implies
\begin{align*}
V(y_1,z_0)&=\M_+(z_0)+\min_{y\in\R^d}\left\{\frac12\norm{y-z_0}^2+\M_-(y)\right\}=\M_+(z_0)-\tau_-(E(z_0)-E^\star)\\
&=\tau_+(E(y_0)-E^\star)-\tau_-(E(z_0)-E^\star)-\frac12\norm{y_0-z_0}^2<\infty.
\end{align*}
Moreover, the consecutive decrease of $V$ is bounded below by
\begin{equation*}
\begin{aligned}
V(y_t,z_t)-V(y_{t+1},z_t)&=\frac12\norm{y_t-z_t}^2-\frac12\norm{y_t-z_{t-1}}^2+\tau_-\bigl(E(z_t)-E(z_{t-1})\bigr)\\
&=\Phi_-(z_t,z_{t-1})\ge\frac{\norm{y_{t+1}-y_t}^2}{2(1+\tau_-/\tau_+)},
\end{aligned}
\end{equation*}
where the second equality uses $y_t=z_{t-1}+\tau_-g(z_{t-1})$, and the last inequality follows from \eqref{eq:phi-grad} and \Cref{lem:single-step}.
Together with nonnegativity of $V$, this gives $y_{t+1}-y_t\to0$. Combining this with \eqref{eq:ada} and the inner secant \eqref{eq:secant-inner} yields
\begin{equation*}
\bigl(\bar\mu-\abs{\Lambda_+-\Lambda_--c_\mu}\bigr)\norm{y_t-z_t}\le\Lambda_-\norm{y_{t+1}-y_t}\longrightarrow0.
\end{equation*}
The coefficient is positive by \eqref{eq:ada-scales}, so $\norm{y_t-z_t}\to0$, and thus $g(y_t)=\Lambda_+(y_t-z_t)\to0$.

Consequently, $y_t\to x^\star$, and $z_t=y_t^\downarrow\to x^\star$.

\subsection{Oracle Complexity}

We now proceed to quantify the oracle complexity of \ADA{} convergence.

\begin{theorem}\label{thm:ada}
For every fixed admissible scale pair $\Lambda_\pm$, \ADA{} starting from any $x_0\in\R^d$ finds an $\varepsilon$-stationary point for $0<\varepsilon<\norm{g(x_0)}$ with oracle complexity
\begin{equation}\label{eq:ada-complexity}
Q=O\left(\kappa_\Lambda\left\lceil\log\frac{\norm{g(x_0)}}{\varepsilon}\right\rceil\right),\qquad\kappa_\Lambda:=\frac{1}{1-q_\Lambda}.
\end{equation}
\end{theorem}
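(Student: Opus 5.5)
The plan is to show that the potential $V$ from \eqref{eq:v} contracts geometrically along the \ADA{} trajectory \eqref{eq:ada}, with a per-cycle factor $1-c(1-q_\Lambda)$ for an absolute constant $c>0$. This costs $O(\kappa_\Lambda\log(\cdot))$ cycles, and each cycle uses two gradient calls. To turn the contraction into a statement about $\norm{g}$, we need two comparisons: an upper bound $V(y_1,z_0)\le C_0\norm{g(x_0)}^2$ at the start, and a lower bound $\norm{g(y_t)}^2\le C_1 V(y_t,z_{t-1})$ at later iterates. Both constants depend only on the fixed parameters $\Lambda_\pm,\mu_\pm,L_\pm$, so they enter only through the logarithm and are absorbed into the $O(\cdot)$.

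\emph{Endpoint comparisons.} For the start, we already have
\[
V(y_1,z_0)=\tau_+(E(y_0)-E^\star)-\tau_-(E(z_0)-E^\star)-\tfrac12\norm{y_0-z_0}^2.
\]
Write $E(\cdot)-E^\star=D_E(\cdot,x^\star)$. Then \eqref{eq:primal-models} bounds each term by a multiple of $\norm{y_0-x^\star}^2$ or $\norm{z_0-x^\star}^2$. Moreover, $\norm{z_0-x^\star}\le(1+\tau_+L)\norm{y_0-x^\star}$, and the inverse-Lipschitz property of $g$ (Proposition~\ref{prop:geometry}) gives $\norm{y_0-x^\star}\le\norm{g(x_0)}/\bar\mu$. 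Together these yield $V(y_1,z_0)\le C_0\norm{g(x_0)}^2$.

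For the lower bound, the per-step decrease computed in the convergence section satisfies
\[
V(y_t,z_t)-V(y_{t+1},z_t)\ge\frac{\norm{y_{t+1}-y_t}^2}{2(1+\tau_-/\tau_+)},
\]
and the analogous formula holds for the $z$-step via $\Phi_+$. The inner secant \eqref{eq:secant-inner} gives
\[
\bigl(\bar\mu-\abs{\Lambda_+-\Lambda_--c_\mu}\bigr)\norm{y_t-z_t}\le\Lambda_-\norm{y_{t+1}-y_t},
\]
and $g(y_t)=\Lambda_+(y_t-z_t)$. Hence $\norm{g(y_t)}^2$ is bounded by a constant times one cycle's decrease, which is in turn at most $V$.

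\emph{Contraction (the main step).} I would compare $V(y_{t+1},z_{t+1})$ with the decrease produced by one cycle, using $\DAI$ (Theorem~\ref{thm:dai}) at the base point $x=z_t$. The choice is $y=z_{t+1}$ or the preceding iterate, and $z=z_{t-1}$. By \eqref{eq:phi-grad}, the gradients $\nabla_x\Phi_\pm$ are then exactly consecutive differences of iterates ($x^\downarrow-y^\downarrow$ and $x^\uparrow-z^\uparrow$), while $\Phi_\pm$ are the one-step decreases of $V$. The weights are chosen balanced and extremal, with $\rho_+\rho_-=q_\Lambda$, $\rho_\pm\ge(\bar\Lambda-\delta_\Lambda)/\Lambda_\mp$, and $\rho_\pm<1$; admissibility \eqref{eq:ada-scales} makes such a choice possible since $q_\Lambda<1$. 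With this choice, DAI says that the cross term $\langle\cdot,\cdot\rangle$ appearing when $V$ at the new pair is expanded around the old pair is controlled by $\rho_+$ times one decrease plus $\rho_-$ times the other.

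In parallel, Lemma~\ref{lem:two-anchor} with the same weights, at anchors $\xi_+=z$ and $\xi_-=y$, yields the coercivity estimate
\[
V(y,z)\ge(1-\max\rho_\pm)\bigl(A_++A_-\bigr),
\]
where $A_\pm$ are the bracketed envelope terms. The expected outcome is an inequality of the form
\[
V_{\text{new}}\le\max\{\rho_+,\rho_-,\sqrt{q_\Lambda}\}\,V_{\text{old}}+\text{(lower order)},
\]
which I would rearrange into $V_{\text{new}}\le\bigl(1-c(1-q_\Lambda)\bigr)V_{\text{old}}$, using $1-\sqrt q\ge(1-q)/2$.

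The obstacle is making this telescoping exact. The potential is evaluated at alternating pairs $(y_t,z_{t-1})$ and $(y_t,z_t)$, and the DAI base point has to be matched so that its three terms are precisely a one-step decrease and the current value of $V$. If a direct match fails, I would first apply Lemma~\ref{lem:two-anchor} to the relative energy $D_E(\cdot,z_t)$, exactly as in the proof of Theorem~\ref{thm:dai}. Here $\widehat{\M}_\pm$ of the iterates coincide with $\Phi_\pm$, and $V$ minus its minimum over one coordinate splits into $\Phi_+$ and $\Phi_-$ pieces. Lemma~\ref{lem:single-step} would then absorb the gradient-norm terms, with the choice $\tau_+\ge\tau_-$ keeping its constants at most $2$.
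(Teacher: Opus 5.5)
\textbf{The contraction step is missing, and the route you sketch does not close.} All of the quantitative content of the theorem sits in the claim $V_{\mathrm{new}}\le(1-c(1-q_\Lambda))V_{\mathrm{old}}$. You leave this as an ``expected outcome'' with unspecified lower-order terms. The obstruction you sense is structural. DAI couples $\Phi_+(x,\cdot)$ and $\Phi_-(x,\cdot)$ at a \emph{common} base point $x$, whereas the one-step decreases of $V$ are $V(y_t,z_{t-1})-V(y_t,z_t)=\Phi_+(y_t,y_{t-1})$ and $V(y_t,z_t)-V(y_{t+1},z_t)=\Phi_-(z_t,z_{t-1})$, based on different subsequences. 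So in any single DAI instance at most one of the two $\Phi$ terms is a decrease of $V$. The other term, e.g.\ $\Phi_+(z_t,y_t)$, is a quantity your potential does not track, and no single instance reads ``decrease $\ge$ cross term''.

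Nor can the $\kappa_\Lambda$ rate be recovered from the comparisons you list. They give $V_{\mathrm{new}}\le C(V_{\mathrm{old}}-V_{\mathrm{new}})$, but with a lossy $C$. In the SCSC case $\Lambda_\pm=L_\pm=L$, $\mu_\pm=\mu$, they bound $V(y_{t+1},z_t)\lesssim\mu^{-2}\norm{g(y_t)}^2$ against a decrease $\gtrsim\mu^2L^{-4}\norm{g(y_t)}^2$. That is a factor $(L/\mu)^4$ rather than $\kappa_\Lambda=(L/\mu)^2$.

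The paper's fix has two parts. First, it replaces $V$ by the corrected energy $W_t=\tau_+(E(y_t)-E^\star)+\frac{\tau_-}{\tau_+}(\kappa_\Lambda-1)\Phi_+(z_t,y_t)+\frac12\norm{y_{t+1}-z_t}^2$, which carries exactly the offending term $\Phi_+(z_t,y_t)$. Second, it applies DAI \emph{twice}, at $(z_t,y_t,y_{t+1})$ and $(y_{t+1},z_{t+1},z_t)$, with the unbalanced weights $(q_\Lambda\tau_-/\tau_+,\tau_+/\tau_-)$. Note that $\rho_-\ge1$ here, not $<1$ as you require. Both cross terms are then $\tau_+\tau_-\langle g(\cdot),g(\cdot)\rangle$ at consecutive iterates. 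On adding the two instances, the $\Phi_-$ terms combine through $\Phi_-(a,b)+\Phi_-(b,a)=\langle a^\uparrow-b^\uparrow,a-b\rangle$ into exactly the first cross term and cancel it. The same symmetrization for $\Phi_+$, together with \Cref{lem:single-step}, turns the remainder into $(\kappa_\Lambda-1)\frac{\tau_-}{\tau_+}\bigl[\Phi_+(z_t,y_t)-\Phi_+(z_{t+1},y_{t+1})\bigr]\ge\frac{\tau_+}{\tau_++\tau_-}\norm{y_{t+2}-z_{t+1}}^2$. With the descent-lemma bound on $E(y_{t+1})-E(y_t)$, this yields $W_t-W_{t+1}\ge\frac12\norm{y_t-z_t}^2$ and $2W_t\le(3\kappa_\Lambda+2)\norm{y_t-z_t}^2$, which is precisely where $\kappa_\Lambda$ enters.

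\textbf{The conversion to gradient norms also loses the stated bound.} Even granting contraction, your argument yields $O\bigl(\kappa_\Lambda[\log(\norm{g(x_0)}/\varepsilon)+\log(C_0C_1)]\bigr)$. The product $C_0C_1$ is unbounded as conditioning deteriorates, since it contains the inverse-Lipschitz constant of $g$ and the factor $(\bar\mu-\abs{\Lambda_+-\Lambda_--c_\mu})^{-2}$. In the SCSC case it is of order $(L/\mu)^4=\kappa_\Lambda^2$. So the extra $\kappa_\Lambda\log(C_0C_1)$ cannot be absorbed into an $O(\cdot)$ with an absolute constant. If the constant were allowed to depend on $\Lambda_\pm,\mu_\pm,L_\pm$, the explicit factor $\kappa_\Lambda$ would be meaningless; for $\varepsilon=\norm{g(x_0)}/2$ the theorem asserts $O(\kappa_\Lambda)$ calls.

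The paper avoids any such logarithm by a windowed halving. Summing $W_i-W_{i+1}\ge\frac12\norm{y_i-z_i}^2$ over $t\le i\le t+T$, and using $W\ge0$ with the upper bound on $W_t$, gives $\min_{t<i\le t+T}\norm{g(y_i)}^2\le\frac{4\kappa_\Lambda}{T}\norm{g(y_t)}^2$. Hence the gradient norm halves within $\lceil16\kappa_\Lambda\rceil$ steps from any index.

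Finally, a minor error: $g^{-1}$ is $1/\min\{\mu_+,\mu_-\}$-Lipschitz, not $1/\bar\mu$-Lipschitz. The inner secant only gives $\norm{\Delta g}\ge(\bar\mu-\abs{c_\mu})\norm{\Delta x}$; take $E=\frac12\norm{x}^2$ with $\mu_+=1<\mu_-$.
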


\begin{proof}
Consider the corrected energy
\begin{equation}\label{eq:w}
W_t:=\tau_+(E(y_t)-E^\star)
+\frac{\tau_-}{\tau_+}(\kappa_\Lambda-1)\Phi_+(z_t,y_t)
+\frac12\norm{y_{t+1}-z_t}^2.
\end{equation}
We first show that $W_t\ge0$. By the definition of $\Phi_+$, we have the equivalent expression
\begin{equation}\label{eq:w-at-z}
W_t=\tau_+(E(z_t)-E^\star)+\left[1+\frac{\tau_-}{\tau_+}(\kappa_\Lambda-1)\right]\Phi_+(z_t,y_t)
+\frac12\norm{y_t-z_t}^2+\frac12\norm{y_{t+1}-z_t}^2.
\end{equation}
Thus it suffices to bound the first two terms.

By \eqref{eq:secants}, $E\in\classF_{\widehat\mu_\pm,\Lambda_\pm}$ with $\widehat\mu_\pm:=\delta_\Lambda\pm c_\Lambda\le\mu_\pm$ and
\begin{equation*}
\Lambda_+-\widehat\mu_+=\Lambda_--\widehat\mu_-,
\qquad
\kappa_\Lambda\tau_+\tau_-\widehat\mu_+\widehat\mu_-=1.
\end{equation*}
Applying \eqref{eq:dual-models} to this class, together with $D_{E^\sharp}(0,g(x))=D_E(x,x^\star)=E(x)-E^\star$ by \eqref{eq:bregman-duality}, yields
\begin{equation}\label{eq:energy-residual}
-\frac{\norm{g(x)}^2}{2\widehat\mu_-}
\le E(x)-E^\star
\le\frac{\norm{g(x)}^2}{2\widehat\mu_+},\qquad \forall x\in\R^d.
\end{equation}
In particular, at $z_t$,
\begin{equation}\label{eq:energy-lower}
E(z_t)-E^\star\ge-\frac{\norm{g(z_t)}^2}{2\widehat\mu_-}=-\frac{1}{2\tau_-^2\widehat\mu_-}\norm{y_{t+1}-z_t}^2.
\end{equation}
Meanwhile, by \eqref{eq:phi-bound},
\begin{equation}\label{eq:phi-lower}
\Phi_+(z_t,y_t)\ge\frac{\norm{\nabla_x\Phi_+(z_t,y_t)}^2}{2(1+\tau_+/\tau_-)}
=\frac{\tau_+^2}{2\tau_-(\tau_++\tau_-)}\norm{y_{t+1}-z_t}^2.
\end{equation}
Substituting \eqref{eq:energy-lower} and \eqref{eq:phi-lower} into \eqref{eq:w-at-z} gives
\begin{equation}\label{eq:w-lower}
W_t\ge\frac12\norm{y_t-z_t}^2
+\frac{\tau_-+\tau_+\kappa_\Lambda(1-\tau_+\widehat\mu_+)^2}
{2(\tau_++\tau_-)}\norm{y_{t+1}-z_t}^2
\ge0.
\end{equation}

We may also bound $W_t$ in \eqref{eq:w} from above. Applying \eqref{eq:energy-residual} to $y_t$ gives
\begin{equation}\label{eq:energy-upper}
E(y_t)-E^\star\le\frac{\norm{g(y_t)}^2}{2\widehat\mu_+}=\frac{1}{2\tau_+^2\widehat\mu_+}\norm{y_t-z_t}^2,
\end{equation}
while, by the lower bound in \eqref{eq:primal-models},
\begin{equation}\label{eq:phi-upper}
\Phi_+(z_t,y_t)=\frac12\norm{z_t-y_t}^2-\tau_+D_E(z_t,y_t)
\le\frac{\tau_++\tau_-}{2\tau_-}\norm{y_t-z_t}^2.
\end{equation}
Substituting \eqref{eq:energy-upper}, \eqref{eq:phi-upper}, and \eqref{eq:phi-lower} into \eqref{eq:w} and using $\tau_-\le\tau_+$ gives the upper bound
\begin{equation}\label{eq:w-upper}
2W_t\le(3\kappa_\Lambda+2)\norm{y_t-z_t}^2.
\end{equation}

We now turn to estimating the decrease
\begin{equation}\label{eq:w-diff}
\begin{aligned}
W_t-W_{t+1}=&\,\tau_+\bigl(E(y_t)-E(y_{t+1})\bigr)\\
&+\frac{\tau_-}{\tau_+}(\kappa_\Lambda-1)\bigl[\Phi_+(z_t,y_t)-\Phi_+(z_{t+1},y_{t+1})\bigr]\\
&+\frac12\norm{y_{t+1}-z_t}^2-\frac12\norm{y_{t+2}-z_{t+1}}^2
\end{aligned}
\end{equation}
by bounding each term separately.

The first term is bounded using the upper bound in \eqref{eq:primal-models}:
\begin{equation}\label{eq:energy-change}
\begin{aligned}
&\tau_+\bigl(E(y_{t+1})-E(y_t)\bigr)
\le\langle y_t-z_t,y_{t+1}-y_t\rangle+\frac12\norm{y_{t+1}-y_t}^2\\
=&\,\frac12\norm{y_{t+1}-z_t}^2-\frac12\norm{y_t-z_t}^2=\frac12\norm{y_{t+1}-z_t}^2-\frac{\tau_+^2}{2}\norm{g(y_t)}^2.
\end{aligned}
\end{equation}

The second term, involving the difference of $\Phi_+$, is more subtle.
Using \eqref{eq:ada},
\begin{equation}\label{eq:phi-diff}
\begin{aligned}
\frac{\tau_-}{\tau_+}\bigl[\Phi_+(z_t,y_t)-\Phi_+(z_{t+1},y_{t+1})\bigr]
={}&\frac{\tau_-}{\tau_+}\bigl[\Phi_+(z_t,y_t)+\Phi_+(y_{t+1},z_{t+1})\bigr]\\
&-\langle y_{t+1}-z_{t+1},y_{t+2}-z_{t+1}\rangle.
\end{aligned}
\end{equation}
To bound the sum of $\Phi_+$ on the right-hand side, we apply \Cref{thm:dai} at $(z_t,y_t,y_{t+1})$ and $(y_{t+1},z_{t+1},z_t)$ with weights $(q_\Lambda\tau_-/\tau_+,\tau_+/\tau_-)$, which gives
\begin{align*}
q_\Lambda\frac{\tau_-}{\tau_+}\Phi_+(z_t,y_t)+\frac{\tau_+}{\tau_-}\Phi_-(z_t,y_{t+1})
&\ge\langle y_{t+1}-z_{t+1},y_{t+1}-z_t\rangle,\\
q_\Lambda\frac{\tau_-}{\tau_+}\Phi_+(y_{t+1},z_{t+1})+\frac{\tau_+}{\tau_-}\Phi_-(y_{t+1},z_t)
&\ge\langle y_{t+1}-z_{t+1},y_{t+2}-z_{t+1}\rangle.
\end{align*}
Adding them and using $\frac{\tau_+}{\tau_-}\bigl[\Phi_-(z_t,y_{t+1})+\Phi_-(y_{t+1},z_t)\bigr]
=\langle y_{t+1}-z_{t+1},y_{t+1}-z_t\rangle$, \eqref{eq:phi-diff}, and $\kappa_\Lambda-1=q_\Lambda/(1-q_\Lambda)$ gives
\begin{equation*}
(\kappa_\Lambda-1)\frac{\tau_-}{\tau_+}\bigl[\Phi_+(z_t,y_t)-\Phi_+(z_{t+1},y_{t+1})\bigr]
\ge\langle y_{t+1}-z_{t+1},y_{t+2}-z_{t+1}\rangle.
\end{equation*}
Moreover, applying \eqref{eq:phi-bound} to $\Phi_+$ at $(z_{t+1},y_{t+1})$ and $(y_{t+1},z_{t+1})$ gives
\begin{equation*}
\begin{aligned}
\langle y_{t+1}-z_{t+1},y_{t+2}-z_{t+1}\rangle
&=\frac{\tau_-}{\tau_+}\bigl[\Phi_+(z_{t+1},y_{t+1})+\Phi_+(y_{t+1},z_{t+1})\bigr]\\
&\ge\frac{\tau_+}{\tau_++\tau_-}\norm{y_{t+2}-z_{t+1}}^2,
\end{aligned}
\end{equation*}
and consequently,
\begin{equation}\label{eq:phi-decrease}
(\kappa_\Lambda-1)\frac{\tau_-}{\tau_+}
\bigl[\Phi_+(z_t,y_t)-\Phi_+(z_{t+1},y_{t+1})\bigr]
\ge\frac{\tau_+}{\tau_++\tau_-}\norm{y_{t+2}-z_{t+1}}^2.
\end{equation}
With \eqref{eq:energy-change}, \eqref{eq:phi-decrease}, and $\tau_+\ge\tau_-$, \eqref{eq:w-diff} gives the sufficient decrease estimate
\begin{equation}\label{eq:w-decrease}
W_t-W_{t+1}\ge\frac12\norm{y_t-z_t}^2.
\end{equation}
Combining this with \eqref{eq:w-upper} then gives the contraction $W_{t+1}\le\left(1-\frac{1}{3\kappa_\Lambda+2}\right)W_t$, yielding linear convergence of $W_t$ to zero. Moreover, for every integer $T\ge1$, summing \eqref{eq:w-decrease} from $t$ to $t+T$ and using \eqref{eq:w-lower} and \eqref{eq:w-upper} gives
\begin{equation*}
\min_{t+1\le i\le t+T}\norm{g(y_i)}^2
\le\frac1T\sum_{i=t+1}^{t+T}\norm{g(y_i)}^2\le\frac{2W_t-\norm{y_t-z_t}^2}{T\tau_+^2}
\le\frac{4\kappa_\Lambda}{T}\norm{g(y_t)}^2,
\end{equation*}
which shows that the gradient norm halves within at most $T_{1/2}:=\lceil16\kappa_\Lambda\rceil$ iterations. Repeating this process with $\lceil\log_2(\norm{g(x_0)}/\varepsilon)\rceil$ successive halvings yields an $\varepsilon$-stationary point, proving \eqref{eq:ada-complexity}.
\end{proof}

In particular, choosing $\Lambda_\pm=\mu_\pm+w$, with $w:=\max\{L_+-\mu_+,L_--\mu_-\}$, minimizes $\kappa_\Lambda$ over all scales satisfying \eqref{eq:ada-scales}, yielding
\begin{equation}\label{eq:kappa-optimal}
\min \kappa_\Lambda=\left(1+\frac{w}{\mu_+}\right)\left(1+\frac{w}{\mu_-}\right).
\end{equation}
Indeed, for $\vartheta:=\Lambda_+-\Lambda_-\in(-\mu_-,\mu_+)$, the minimum of $\lambda(\lambda-\vartheta)$ over $\mathcal{S}$ is attained at one of its inner endpoints $-\mu_-$ and $\mu_+$, so \eqref{eq:q} and \eqref{eq:ada-scales} give
\begin{equation}\label{eq:kappa-scales}
\kappa_\Lambda=\frac{\Lambda_+\Lambda_-}{\min\{\mu_+(\mu_+-\vartheta),\mu_-(\mu_-+\vartheta)\}}\ge\frac{\max\{L_+(L_+-\vartheta),L_-(L_-+\vartheta)\}}{\min\{\mu_+(\mu_+-\vartheta),\mu_-(\mu_-+\vartheta)\}}.
\end{equation}
When $\vartheta\le\mu_+-\mu_-$, the denominator in the final ratio of \eqref{eq:kappa-scales} is $\mu_-(\mu_-+\vartheta)$, and each numerator term divided by it is nonincreasing in $\vartheta$. When $\vartheta\ge\mu_+-\mu_-$, the denominator is $\mu_+(\mu_+-\vartheta)$, and both ratios are nondecreasing. The final ratio in \eqref{eq:kappa-scales} is therefore minimized at $\vartheta=\mu_+-\mu_-$, with value $(1+w/\mu_+)(1+w/\mu_-)$. The choice $\Lambda_\pm=\mu_\pm+w$ attains this bound, proving \eqref{eq:kappa-optimal}.

Moreover, since $w\le L_{\max}:=\max\{L_+,L_-\}$, \eqref{eq:kappa-optimal} gives $\min\kappa_\Lambda\le\frac{4L_{\max}^2}{\mu_+\mu_-}$, so \ADA{} sharpens the oracle complexity bound established for paired proximal descent in our preceding work~\cite{su2026uniform}.

We conclude the analysis of \ADA{} with an illustrative example of a spiral-saddle function. Using polar coordinates $(r\cos\varphi,r\sin\varphi)\in\R^2$, consider
\begin{equation}\label{eq:spiral}
E_\alpha(r\cos\varphi,r\sin\varphi):=
\begin{cases}
\frac12r^2\sin(2\varphi-2\alpha\log r)&r>0,\\
0&r=0.
\end{cases}
\end{equation}

When $0<\alpha<\sqrt{(\sqrt{13}-3)/2}$, the function
$E_\alpha\in C^1(\R^2)$ is uniformly nondegenerate and has the origin as its unique stationary point; see \Cref{app:spiral} for the proof and explicit curvature bounds $L(\alpha)>\mu(\alpha)>0$.

Moreover, as $r\downarrow0$, the phase $\psi:=2\varphi-2\alpha\log r$ winds indefinitely, causing the convex and concave directions to keep rotating, and hence no fixed direction is convex or concave in any neighborhood of the origin.

Nevertheless, for $\alpha=0.4$, \Cref{fig:spiral} shows \ADA{} converging
to the origin from three distinct initializations using the same fixed
step sizes $\tau_+=\tau_-=1/L(\alpha)$.

\begin{figure}[ht]
\centering
\includegraphics[width=0.48\textwidth]{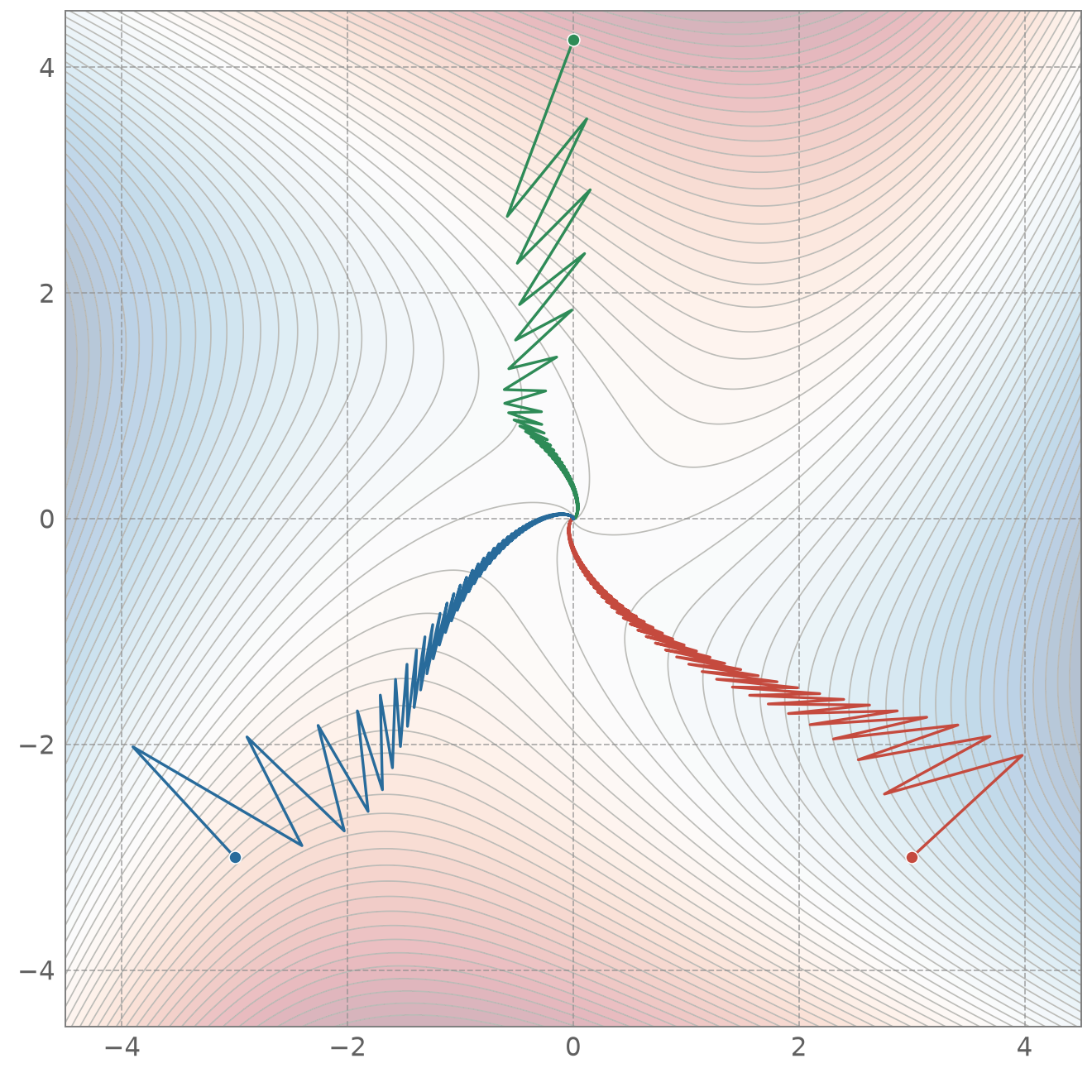}
\caption{\ADA{} trajectories on the spiral saddle $E_\alpha$ with $\alpha=0.4$ from three different initial points.}
\label{fig:spiral}
\end{figure}

\section{Classical Benchmarks}\label{sec:benchmarks}

Having established the theory under uniform nondegeneracy, we now relate it to several classical settings in first-order optimization.

\subsection{Strongly Convex Optimization}

We first show that the classical smooth strongly convex setting is recovered precisely as a limit of the uniform nondegeneracy framework. Specifically, consider the limiting parameter regime with fixed positive-curvature bounds $\mu_+$ and $L_+$ and a negative band that recedes to infinity:
\begin{equation}\label{eq:sc-family}
\mu_+=\mu,\quad L_+=L,\qquad \mu_-\to\infty,\quad L_-=\mu_-+(L-\mu)\to\infty.
\end{equation}

For fixed $x_1,x_2\in\R^d$, the inner secant \eqref{eq:secant-inner} gives
\begin{equation*}
\norm{\Delta g}^2-(\mu_+-\mu_-)\langle\Delta g,\Delta x\rangle-\mu_+\mu_-\norm{\Delta x}^2\ge0.
\end{equation*}
Dividing by $\mu_-$ and taking the limit $\mu_-\to\infty$ gives $\langle\Delta g,\Delta x\rangle\ge\mu\norm{\Delta x}^2$, which is the classical characterization of $\mu$-strong convexity~\cite[Section~2.1]{nesterov2018lectures}. Likewise, the outer secant \eqref{eq:secant-outer} gives $\langle\Delta g,\Delta x\rangle\le L\norm{\Delta x}^2$, characterizing $L$-smoothness.

Furthermore, with $\Lambda_+=L$ and $\Lambda_-=L_-\to\infty$, DAI recovers the classical interpolation inequality, as stated in the following proposition.

\begin{proposition}
When $L>\mu$, the limit of \eqref{eq:dai} with weights $(1,q_\Lambda)$ reduces to the interpolation inequality~\cite{taylor2017interpolation}
\begin{equation*}
D_E(\xi,x)\ge
\frac{\mu}{2}\norm{\xi-x}^2
+\frac{\norm{g(\xi)-g(x)-\mu(\xi-x)}^2}{2(L-\mu)},\quad\forall\xi,x\in\R^d.
\end{equation*}
\end{proposition}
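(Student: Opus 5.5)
The plan is to apply \Cref{thm:dai} along the family \eqref{eq:sc-family}, pass to the limit $\mu_-\to\infty$ pointwise in $(x,y,z)$, and then optimize out the free point $z$.

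\emph{Setup.} A $\mu$-strongly convex, $L$-smooth $E$ has secants compatible with spectrum in $[\mu,L]\subseteq\mathcal{S}$ for every $\mu_-$, so $E\in\classF_{\mu_\pm,L_\pm}(\R^d)$ along the whole family. Take $\Lambda_+=L$ and $\Lambda_-=L_-=\mu_-+L-\mu$. Then
\begin{equation*}
c_\Lambda=\tfrac{\mu-\mu_-}{2},\qquad
\bar\Lambda=\tfrac{\mu_-+2L-\mu}{2},\qquad
\delta_\Lambda=\tfrac{\mu+\mu_-}{2}.
\end{equation*}
Indeed, $c_\Lambda$ lies in the gap $(-\mu_-,\mu)$ at equal distance from both inner endpoints. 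Consequently $\bar\Lambda-\delta_\Lambda=L-\mu$, and
\begin{equation*}
q_\Lambda=\frac{(L-\mu)(\mu_-+L)}{L(\mu_-+L-\mu)}\ \longrightarrow\ 1-\frac{\mu}{L}.
\end{equation*}

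\emph{Checking \eqref{eq:dai-conditions}.} The weights $(\rho_+,\rho_-)=(1,q_\Lambda)$ are admissible:
\begin{itemize}
\item $\rho_+=1\ge (L-\mu)/L_-$ once $\mu_-$ is large;
\item $\rho_-=q_\Lambda\ge (L-\mu)/L$, because $(\mu_-+L)/(\mu_-+L-\mu)\ge1$;
\item $\rho_+\rho_-=q_\Lambda$ holds with equality.
\end{itemize}
So \Cref{thm:dai} gives, for all $x,y,z$,
\begin{equation*}
\Phi_+(x,y)+q_\Lambda\Bigl[\tfrac12\norm{x-z}^2+\tfrac{1}{L_-}D_E(x,z)\Bigr]\ge\Bigl\langle x-y-\tfrac{g(x)-g(y)}{L},\;x-z+\tfrac{g(x)-g(z)}{L_-}\Bigr\rangle .
\end{equation*}

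\emph{Limit and optimization over $z$.} For fixed $x,y,z$, the terms carrying $1/L_-$ vanish as $\mu_-\to\infty$, and $q_\Lambda\to q:=1-\mu/L$. Writing $a:=\Delta x-\Delta g/L$ with $\Delta x=x-y$, $\Delta g=g(x)-g(y)$, and $u:=x-z$, the limit is
\begin{equation*}
\Phi_+(x,y)+\tfrac q2\norm{u}^2\ge\langle a,u\rangle\qquad\text{for all }u.
\end{equation*}
Since $L>\mu$ gives $q>0$, taking the supremum over $u$ yields
\begin{equation*}
\tfrac12\norm{\Delta x}^2-\tfrac1L D_E(x,y)\ge\frac{\norm{L\Delta x-\Delta g}^2}{2L(L-\mu)}.
\end{equation*}

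\emph{Conversion to the stated form.} Use $D_E(y,x)=\langle\Delta g,\Delta x\rangle-D_E(x,y)$ to get
\begin{equation*}
D_E(y,x)\ge\langle\Delta g,\Delta x\rangle-\tfrac L2\norm{\Delta x}^2+\frac{\norm{L\Delta x-\Delta g}^2}{2(L-\mu)}.
\end{equation*}
Multiplying by $2(L-\mu)$ and expanding, both this right-hand side and $\frac\mu2\norm{\Delta x}^2+\frac{\norm{\Delta g-\mu\Delta x}^2}{2(L-\mu)}$ reduce to $\norm{\Delta g}^2-2\mu\langle\Delta g,\Delta x\rangle+L\mu\norm{\Delta x}^2$. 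Renaming $(y,x)\to(\xi,x)$ gives the interpolation inequality.

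\emph{Main obstacle.} The real work is the careful evaluation of $\delta_\Lambda$ and $q_\Lambda$ along the family: verifying that $c_\Lambda$ sits in the gap, and that the weight constraints hold uniformly for large $\mu_-$. Once that is done, the limit itself is routine, since all quantities are finite for fixed points.
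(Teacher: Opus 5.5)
Your proposal is correct and follows the paper's own route: pass to the limit $\mu_-\to\infty$ in \eqref{eq:dai} with weights $(1,q_\Lambda)$, optimize the resulting quadratic in $x-z$, and convert the bound using $D_E(x,y)+D_E(y,x)=\langle\Delta g,\Delta x\rangle$. Your explicit check of \eqref{eq:dai-conditions} and of $q_\Lambda\to1-\mu/L$ fills in details the paper leaves implicit. Two small refinements: the bound on $\rho_+$ holds for every $\mu_->0$, not only for large $\mu_-$; and for a $C^1$ function $E$, membership in each class is cleanest to justify directly, via strong monotonicity for the inner secant and cocoercivity for the outer one, rather than by appeal to a spectrum.
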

\begin{proof}
Under the limit \eqref{eq:sc-family}, $\Phi_-(x,z)\to\norm{x-z}^2/2$ and $\nabla_x\Phi_-(x,z)\to x-z$, so \eqref{eq:dai} with weights $(1,q_\Lambda)$ tends to
\begin{equation*}
\Phi_+(x,y)+\frac{1-1/\kappa}{2}\norm{x-z}^2\ge\langle\nabla_x\Phi_+(x,y),x-z\rangle,\qquad\kappa:=L/\mu.
\end{equation*}
Optimizing over $z$ at $z=x-\nabla_x\Phi_+(x,y)/(1-1/\kappa)$ gives
\begin{equation*}
D_E(x,y)\le\frac L2\norm{x-y}^2-\frac{\norm{L(x-y)-g(x)+g(y)}^2}{2(L-\mu)}.
\end{equation*}
Setting $y=\xi$, we obtain the desired inequality:
\begin{align*}
D_E(\xi,x)
&=\langle g(\xi)-g(x),\xi-x\rangle-D_E(x,\xi)\\
&\ge\langle g(\xi)-g(x),\xi-x\rangle-\frac L2\norm{\xi-x}^2+\frac{\norm{L(\xi-x)-g(\xi)+g(x)}^2}{2(L-\mu)}\\
&=\frac\mu2\norm{\xi-x}^2+\frac{\norm{g(\xi)-g(x)-\mu(\xi-x)}^2}{2(L-\mu)}.
\end{align*}
\end{proof}

On the algorithmic side, the same limit recovers classical gradient descent. Indeed, since
$\tau_+=1/L$, $\tau_-\to0$ and $\kappa_\Lambda\to\kappa$, \ADA{} reduces exactly to classical gradient descent with fixed step size $1/L$ and complexity $O(\kappa\log(\norm{g(x_0)}/\varepsilon))$.

Thus, the classical smooth strongly convex interpolation inequality and gradient descent arise simultaneously from the same limit within the uniform nondegeneracy framework.

\subsection{Strongly Convex--Strongly Concave Optimization}

Consider the strongly convex--strongly concave minimax problem
\begin{equation*}
\min_{u\in\R^n}\max_{v\in\R^m} E(u,v),
\end{equation*}
whose saddle operator $F(x):=(\nabla_u E,-\nabla_v E)$, with $x=(u,v)$, is $\mu$-strongly monotone and $L$-Lipschitz continuous. Classical gradient descent--ascent (GDA) explicitly uses this minimax splitting, updating the minimizing variable $u$ and the maximizing variable $v$, respectively, by~\cite{facchinei2003variational}
\begin{equation*}
u_{t+1}=u_t-\eta\nabla_u E(u_t,v_t),\qquad v_{t+1}=v_t+\eta\nabla_v E(u_t,v_t).
\end{equation*}

With $\eta=\mu/L^2$, the standard contraction estimate~\cite[Theorem~12.1.2]{facchinei2003variational} implies
\begin{equation*}
\norm{F(x_{t+1})}\le\sqrt{1-\kappa^{-2}}\norm{F(x_t)}, \quad\kappa:=L/\mu,
\end{equation*}
and GDA obtains an $\varepsilon$-stationary point in $O(\kappa^2\log(\norm{F(x_0)}/\varepsilon))$ gradient evaluations.

The same SCSC assumptions already imply uniform nondegeneracy.
Indeed, since $F$ is $\mu$-strongly monotone,
\begin{equation*}
\norm{F(x)-F(x^\prime)}\cdot\norm{x-x^\prime}\ge\langle F(x)-F(x^\prime),x-x^\prime\rangle\ge\mu\norm{x-x^\prime}^2.
\end{equation*}
Together with $L$-Lipschitzness of $F$, and noting that
$F=(\nabla_uE,-\nabla_vE)$ differs from
$g=\nabla E=(\nabla_uE,\nabla_vE)$ only by a sign change in the $v$-component, we obtain
\begin{equation*}
\mu\norm{x-x^\prime}\le\norm{g(x)-g(x^\prime)}\le L\norm{x-x^\prime},
\qquad
\forall x,x^\prime\in\R^{n+m}.
\end{equation*}
Hence $E$ is uniformly nondegenerate with $\mu_+=\mu_-=\mu$ and $L_+=L_-=L$.
Taking $\Lambda_+=\Lambda_-=L$ gives $\kappa_\Lambda=\kappa^2$, and \ADA{} with $\tau_+=\tau_-=1/L$ updates the joint variable $x=(u,v)$ simultaneously by
\begin{equation*}
x_{t+1/2}=x_t-\frac{1}{L}g(x_t),\qquad x_{t+1}=x_{t+1/2}+\frac{1}{L}g(x_{t+1/2}).
\end{equation*}
\Cref{thm:ada} shows that it reaches an $\varepsilon$-stationary point in $O(\kappa^2\log(\norm{g(x_0)}/\varepsilon))$ gradient evaluations.
Thus, the classical GDA complexity order is already attained under uniform nondegeneracy alone, without using the prescribed minimizing--maximizing splitting.

\subsection{Linearly Constrained Strongly Convex Optimization}

Consider
\begin{equation*}
\min_{x\in\R^n} f(x)\quad\text{subject to }Ax=b,
\end{equation*}
where $f\in C^1(\R^n)$ is $\mu$-strongly convex and $L$-smooth, $A\in\R^{m\times n}$ has full row rank, and $b\in\R^m$. For $\gamma>0$, its KKT system~\cite[Section~5.5.3]{boyd2004convex}
is equivalently the stationarity condition for the scaled Lagrangian $\mathcal{L}_\gamma(x,\lambda):=f(x)+\gamma\langle\lambda,Ax-b\rangle$:
\begin{equation*}
\nabla\mathcal{L}_\gamma(x^\star,\lambda^\star)=\Bigl(\nabla f(x^\star)+\gamma A^\top\lambda^\star,\;\gamma(Ax^\star-b)\Bigr)=0,
\end{equation*}
where $\gamma\lambda$ is the usual Lagrange multiplier.
Although $\mathcal{L}_\gamma$ is affine in $\lambda$ and hence is not strongly convex--strongly concave, the full-row-rank coupling through $A$ is sufficient to make it uniformly nondegenerate.

\begin{proposition}\label{prop:kkt}
$\mathcal{L}_\gamma$ is uniformly nondegenerate with endpoints
\begin{equation}\label{eq:kkt-endpoints}
\begin{aligned}
\mu_+&=\mu,
&L_+&=\frac{\sqrt{L^2+4\gamma^2\sigma_{\max}^2}+L}{2},\\
\mu_-&=\frac{\sqrt{L^2+4\gamma^2\sigma_{\min}^2}-L}{2},
&L_-&=\frac{\sqrt{\mu^2+4\gamma^2\sigma_{\max}^2}-\mu}{2},
\end{aligned}
\end{equation}
where $\sigma_{\min}:=\sigma_{\min}(A)$ and $\sigma_{\max}:=\sigma_{\max}(A)$.
\end{proposition}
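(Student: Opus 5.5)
The plan is to reduce the statement to a spectral bound for one symmetric saddle-point matrix, using the pairwise secant formulation \eqref{eq:secants}; this avoids assuming $f\in C^2$. For two points $w_i=(x_i,\lambda_i)$ write $\Delta x,\Delta\lambda$ and $u:=\Delta x$, $v:=\nabla f(x_1)-\nabla f(x_2)$. Then
\begin{equation*}
\Delta\nabla\mathcal{L}_\gamma=\bigl(v+\gamma A^\top\Delta\lambda,\;\gamma A\Delta x\bigr).
\end{equation*}

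\textbf{Step 1: replace the secant by a matrix.} Since $f$ is $\mu$-strongly convex and $L$-smooth, the classical interpolation inequality gives $\langle v-\mu u,\,Lu-v\rangle\ge0$. I will use the elementary fact that such a pair $(u,v)$ admits a symmetric $H$ with $\spec(H)\subseteq[\mu,L]$ and $Hu=v$. To see this, work in $\operatorname{span}\{u,v\}$ and set $H=\mu$ on its orthogonal complement. In that plane, the condition says $v$ lies in the disk with diameter $[\mu u,Lu]$, and every point of this disk is $Hu$ for some symmetric $2\times2$ matrix $H$ with eigenvalues in $[\mu,L]$. Consequently $\Delta\nabla\mathcal{L}_\gamma=M\,\Delta w$ with
\begin{equation*}
M:=\begin{pmatrix}H&\gamma A^\top\\ \gamma A&0\end{pmatrix}.
\end{equation*}
The matrix $M$ is symmetric. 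If $\spec(M)\subseteq\mathcal{S}$ with the endpoints \eqref{eq:kkt-endpoints}, then every eigenvalue of $M-c_\mu$ lies in $[-\bar L,-\bar\mu]\cup[\bar\mu,\bar L]$, and the same holds for $M-c_L$ in absolute value. Both inequalities in \eqref{eq:secants} then follow immediately for the pair $w_1,w_2$.

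\textbf{Step 2: eigenvalue bounds for $M$ (main step).} Let $M(x,y)=\lambda(x,y)$ with $(x,y)\ne0$, so that $Hx+\gamma A^\top y=\lambda x$ and $\gamma Ax=\lambda y$. I will treat the cases $\lambda<\mu$ and $\lambda\ge\mu$ separately.

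If $\lambda<\mu$, then $H-\lambda$ is positive definite. The first equation gives $x=-(H-\lambda)^{-1}\gamma A^\top y$, and substituting into the second yields
\begin{equation*}
-\lambda\norm{y}^2=\gamma^2\,y^\top A(H-\lambda)^{-1}A^\top y .
\end{equation*}
Here $y\neq0$, since otherwise $x$ would be an eigenvector of $H$ with eigenvalue below $\mu$. Because $A$ has full row rank, the right side lies between $\gamma^2\sigma_{\min}^2\norm{y}^2/(L-\lambda)$ and $\gamma^2\sigma_{\max}^2\norm{y}^2/(\mu-\lambda)$. Three consequences follow.
\begin{itemize}
\item The lower bound forces $\lambda<0$; in particular no eigenvalue lies in $[0,\mu)$, so $\mu_+=\mu$.
\item The lower bound also gives $-\lambda(L-\lambda)\ge\gamma^2\sigma_{\min}^2$. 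Solving this quadratic inequality gives $\lambda\le-\mu_-$.
\item The upper bound gives $-\lambda(\mu-\lambda)\le\gamma^2\sigma_{\max}^2$, hence $\lambda\ge-L_-$.
\end{itemize}

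For positive $\lambda$, I will show $x\ne0$: if $x=0$, then $A^\top y=0$ forces $y=0$. Taking the inner product of the first equation with $x$ and substituting $y=\gamma Ax/\lambda$ gives $\lambda^2-h\lambda-\gamma^2a^2=0$, where $h=x^\top Hx/\norm{x}^2\le L$ and $a=\norm{Ax}/\norm{x}\le\sigma_{\max}$. Taking the positive root gives $\lambda\le L_+$.

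\textbf{Main obstacle.} I expect the delicate points to be the negative-eigenvalue bounds, where the Schur-complement argument is needed because $\norm{Ax}/\norm{x}$ alone cannot be bounded below, and the two-dimensional construction of $H$ in Step 1. For the latter I would first try an explicit parametrization: in an orthonormal basis $(e_1,e_2)$ of the plane with $u=\norm{u}e_1$, take the symmetric matrix whose first column is $v/\norm{u}$ and choose its $(2,2)$ entry so that the eigenvalues fall in $[\mu,L]$. The disk condition is exactly what makes such a choice possible.
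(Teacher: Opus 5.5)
Your proof is correct, and its main step takes a genuinely different route from the paper. The paper asserts the existence of $H$ without comment and then argues by comparison. It sandwiches $\mathcal{K}(\mu\Id)\preceq\mathcal{K}(H)\preceq\mathcal{K}(L\Id)$ and computes the spectra of the two endpoint matrices by an SVD of $A$, which splits them into $2\times2$ blocks. It then uses the min--max principle together with the inertia count ($m$ negative and $n$ positive eigenvalues) to trap the $j$th ordered eigenvalue of $\mathcal{K}(H)$ between those of $\mathcal{K}(\mu\Id)$ and $\mathcal{K}(L\Id)$.

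You instead analyze an arbitrary eigenpair directly. The Schur complement for $\lambda<\mu$ excludes $[0,\mu)$ and yields both negative-band bounds at once, and a Rayleigh quotient on the $x$-block gives $\lambda\le L_+$.

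Each route has its advantages:
\begin{itemize}
\item Your argument is more elementary (no SVD, no Courant--Fischer) and shows exactly where $\sigma_{\min}$ and $\sigma_{\max}$ enter.
\item The paper's comparison gives extra information for free: exactly $m$ eigenvalues lie in the negative band and $n$ in the positive band, and $-L_-$, $-\mu_-$, and $L_+$ are realized at $H=\mu\Id$ or $H=L\Id$.
\end{itemize}

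Your Step~1 also supplies a justification the paper leaves implicit for $f\in C^1$, where no Hessian is available to average. It becomes a single line: for $w:=v-\mu u\neq0$, take $H:=\mu\Id+ww^\top/\langle w,u\rangle$. This works because the interpolation inequality reads $\norm{w}^2\le(L-\mu)\langle w,u\rangle$.

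One small correction: the eigenvalues of $M-c_\mu$ need not lie in $[-\bar L,-\bar\mu]\cup[\bar\mu,\bar L]$, since their magnitude can exceed $\bar L$ unless $L_+-\mu_+=L_--\mu_-$. You only need $\abs{\lambda-c_\mu}\ge\bar\mu$ and $\abs{\lambda-c_L}\le\bar L$ for every eigenvalue $\lambda$. Both of these do follow from $\spec(M)\subseteq\mathcal{S}$.
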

\begin{proof}
For every $x_1,x_2\in\R^n$, there is a symmetric matrix $H$ such that
\begin{equation*}
\nabla f(x_1)-\nabla f(x_2)=H(x_1-x_2),\qquad \mu \Id\preceq H\preceq L\Id.
\end{equation*}
Therefore, the Lagrangian gradient difference at $(x_1,\lambda_1)$ and $(x_2,\lambda_2)$ is given by
\begin{equation*}
\nabla\mathcal{L}_\gamma(x_1,\lambda_1)-\nabla\mathcal{L}_\gamma(x_2,\lambda_2)=\mathcal{K}(H)\cdot\begin{pmatrix}
x_1-x_2\\\lambda_1-\lambda_2
\end{pmatrix},\qquad
\mathcal{K}(H):=\begin{pmatrix}H&\gamma A^\top\\\gamma A&0\end{pmatrix},
\end{equation*}
and it suffices to verify $\spec\mathcal{K}(H)\subseteq[-L_-,-\mu_-]\cup[\mu_+,L_+]$, with the details left to \Cref{app:kkt}.
\end{proof}

To balance the constraint coupling with the smoothness scale of $f$, choose
\begin{equation}\label{eq:kkt-scales}
\gamma=L/\sigma_{\max}(A),\qquad
\Lambda_+=L_+,\qquad
\Lambda_-=\mu_-+L_+-\mu,
\end{equation}
which gives $\norm{\gamma A}=L$. With step sizes $\tau_\pm=1/\Lambda_\pm$, alternating full-gradient descent and ascent of $\mathcal{L}_\gamma$ in the joint variable $(x,\lambda)$ yields the following complexity bound.

\begin{corollary}
\ADA{} applied to $\mathcal{L}_\gamma$ with the parameters in \eqref{eq:kkt-scales}, starting from any $(x_0,\lambda_0)\in\R^{n+m}$ with the larger step size, finds a pair $(x,\lambda)$ satisfying $\norm{\nabla\mathcal{L}_\gamma(x,\lambda)}\le\varepsilon$ with oracle complexity
\begin{equation}\label{eq:kkt-complexity}
Q=O\left(\kappa_f\kappa_A^2\left\lceil\log\frac{\norm{\nabla\mathcal{L}_\gamma(x_0,\lambda_0)}}{\varepsilon}\right\rceil\right),\quad
\kappa_f:=\frac L\mu,\quad \kappa_A:=\frac{\sigma_{\max}(A)}{\sigma_{\min}(A)}.
\end{equation}
In particular, \ADA{} admits a matrix-free implementation: each evaluation of $\nabla\mathcal{L}_\gamma$ requires only one evaluation of $\nabla f$ and the matrix-vector products $Ax$ and $A^\top\lambda$.
\end{corollary}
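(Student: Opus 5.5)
The plan is to combine \Cref{prop:kkt} with \Cref{thm:ada}. Once the parameters \eqref{eq:kkt-scales} are shown to be admissible in the sense of \eqref{eq:ada-scales}, it only remains to bound $\kappa_\Lambda=1/(1-q_\Lambda)$ by $O(\kappa_f\kappa_A^2)$.

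\emph{Admissibility.} With $\gamma=L/\sigma_{\max}$, \eqref{eq:kkt-endpoints} gives explicit endpoints:
\begin{equation*}
L_+=\tfrac{1+\sqrt5}{2}L,\qquad
L_-=\tfrac{\sqrt{\mu^2+4L^2}-\mu}{2},\qquad
\mu_-=\tfrac{L}{2}\Bigl(\sqrt{1+4\kappa_A^{-2}}-1\Bigr).
\end{equation*}
First, $\Lambda_+=L_+$ holds trivially. Next, $\Lambda_-=\mu_-+L_+-\mu\ge L_-$ reduces to $L_+-\mu\ge L_--\mu_-$. Since $L_-\le L\le L_+-\mu$ up to constants, I would verify this directly. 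Concretely, $L_+-\mu\ge 1.618L-\mu\ge0.618L+(L-\mu)$, while $L_-\le L$, so it suffices that $\mu_-+0.618L\ge\mu$, which holds because $\mu\le L$.

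Finally, $\Lambda_+-\Lambda_-=\mu_+-\mu_-$ lies strictly inside $(-\mu_-,\mu_+)$, since both $\mu_\pm>0$. So the scales are exactly the choice $\Lambda_\pm=\mu_\pm+w'$ with $w'=L_+-\mu$, and $w'\ge w$ by the previous check.

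\emph{Bounding $\kappa_\Lambda$.} Here I would reuse the computation behind \eqref{eq:kappa-optimal}. With $\vartheta=\Lambda_+-\Lambda_-=\mu_+-\mu_-$, formula \eqref{eq:kappa-scales} (its first equality) gives
\begin{equation*}
\kappa_\Lambda=\frac{\Lambda_+\Lambda_-}{\mu_+\mu_-}=\Bigl(1+\frac{w'}{\mu}\Bigr)\Bigl(1+\frac{w'}{\mu_-}\Bigr).
\end{equation*}
Now $w'\le 2L$, so the first factor is $O(\kappa_f)$. For the second factor, $\sqrt{1+4s}-1\ge 4s/(1+\sqrt{1+4s})\ge 4s/(1+\sqrt5)$ for $s=\kappa_A^{-2}\le1$. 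Hence $\mu_-\gtrsim L\kappa_A^{-2}$, and the second factor is $O(\kappa_A^2)$. Together these give $\kappa_\Lambda=O(\kappa_f\kappa_A^2)$, and \Cref{thm:ada} (with the larger-step-first convention) yields \eqref{eq:kkt-complexity}.

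The matrix-free claim follows directly from the formula $\nabla\mathcal{L}_\gamma=(\nabla f(x)+\gamma A^\top\lambda,\ \gamma(Ax-b))$.

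\emph{Main obstacle.} The delicate step is the admissibility inequality $\Lambda_-\ge L_-$, which I would settle by the crude comparison above. The rest is bookkeeping with the explicit endpoint formulas.
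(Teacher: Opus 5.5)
Your overall route matches the paper's: check \eqref{eq:ada-scales}, use $\vartheta=\mu_+-\mu_-$ to get $\kappa_\Lambda=\Lambda_+\Lambda_-/(\mu\mu_-)$, then use $L_+=\frac{1+\sqrt5}{2}L$ and $\mu_-=\Theta(L/\kappa_A^2)$. Those parts, including your bound on $\mu_-$, are correct.

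The gap is in the step you flagged as delicate, $\Lambda_-\ge L_-$. Your reduction to the sufficient condition $\mu_-+0.618L\ge\mu$ is valid, but that condition does \emph{not} follow from $\mu\le L$. Take $\mu=L$ and $\kappa_A=10$: then $\mu_-=\frac L2(\sqrt{1.04}-1)\approx0.0099L$, so $0.618L+\mu_-\approx0.628L<\mu$. Your heuristic $L\le L_+-\mu$ also fails as soon as $\mu>\frac{\sqrt5-1}{2}L$.

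The loss comes from the bound $L_-\le L$. At $\mu=L$ one has $L_+-\mu=L_-=\frac{\sqrt5-1}{2}L$ exactly. So the needed inequality $L_+-\mu\ge L_--\mu_-$ has slack only $\mu_-$, which can be of order $L/\kappa_A^2$, whereas your bound gives away about $0.382L$. As written, your argument only covers $\mu$ bounded away from $L$ (roughly $\kappa_f\ge\frac{1+\sqrt5}{2}$) or $\kappa_A$ close to $1$, while the corollary is stated for all $\mu\le L$.

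The fix is to compare $L_-$ with $L_+-\mu$ directly instead of passing through $L$. With $\gamma\sigma_{\max}=L$,
\[
L_+-\mu-L_-=\frac{(L-\mu)+\bigl(\sqrt5\,L-\sqrt{\mu^2+4L^2}\bigr)}{2}\ge0 .
\]
This is the paper's inequality $L_+-L_-\ge\mu$, which holds for any $\gamma$ because $t\mapsto\sqrt{t^2+4\gamma^2\sigma_{\max}^2}$ is increasing. It gives $\Lambda_-=\mu_-+L_+-\mu\ge\mu_-+L_->L_-$.

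With this replacement the rest of your proof goes through unchanged. Note also that $w'=L_+-\mu$ then equals $w$ exactly (not merely $w'\ge w$). So the scales in \eqref{eq:kkt-scales} are precisely the optimal choice $\Lambda_\pm=\mu_\pm+w$.
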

\begin{proof}
The endpoints \eqref{eq:kkt-endpoints} satisfy
\begin{equation*}
L_+-L_-=\frac{L+\mu+\sqrt{L^2+4\gamma^2\sigma_{\max}^2}-\sqrt{\mu^2+4\gamma^2\sigma_{\max}^2}}{2}\ge\mu,
\end{equation*}
so \eqref{eq:kkt-scales} gives $\Lambda_-\ge L_-$ and $\Lambda_+-\Lambda_-=\mu-\mu_-\in(-\mu_-,\mu)$, proving admissibility. Since $c_\Lambda=c_\mu$ and $\delta_\Lambda=\bar\mu$, \eqref{eq:q} gives
\begin{equation*}
\kappa_\Lambda=\frac{\Lambda_+\Lambda_-}{\mu\mu_-}
=\frac{L_+}{\mu}\left(1+\frac{L_+-\mu}{\mu_-}\right).
\end{equation*}
At $\gamma=L/\sigma_{\max}$, the endpoint formulas become
\begin{equation*}
L_+=\frac{1+\sqrt5}{2}L,\qquad
\mu_-=\frac{2L}{\kappa_A^2\left(1+\sqrt{1+4/\kappa_A^2}\right)}=\Theta\left(\frac L{\kappa_A^2}\right).
\end{equation*}
Thus $\kappa_\Lambda=O(\kappa_f\kappa_A^2)$, and \Cref{thm:ada} yields the complexity bound \eqref{eq:kkt-complexity}.
\end{proof}

The same dependence on $\kappa_f$ and $\kappa_A$ appears in the convergence rates of classical incremental and simultaneous primal--dual gradient methods~\cite[Theorem~1 and Corollary~1]{alghunaim2020linear}, which likewise admit matrix-free implementations. Their guarantees are stated in terms of weighted primal--dual iterate errors, whereas \eqref{eq:kkt-complexity} controls the scaled KKT residual. Here the curvature gap is generated by strong convexity together with the full-rank constraint coupling, rather than by a prescribed convex--concave splitting. Once this uniform nondegeneracy is established, \ADA{} applies without further specialization to the constrained problem.

\section{Conclusion}\label{sec:conclusion}

Classical first-order optimization is largely built on models in which curvature is both uniformly separated from zero and organized according to prescribed positive and negative orientations. The results of this paper show that these two roles can be separated more sharply than classical theory might suggest. Uniform nondegeneracy retains the curvature gap while allowing its orientation to vary across the domain, yet still admits a coherent first-order descent--ascent structure. DAI captures this interplay quantitatively, characterizes uniform nondegeneracy, and reduces in the strongly convex limit to the classical interpolation inequality.

At the algorithmic level, \ADA{} gives a particularly simple realization of this geometry: it alternates fixed full-gradient descent and ascent steps without prescribing or reconstructing curvature directions. The variational formulation shows that these two updates are exact alternating minimization steps for a common nonnegative potential, which vanishes only at $(x^\star,x^\star)$. Together with this variational structure, DAI yields global linear convergence from arbitrary initialization with a dimension-free first-order complexity bound of $O(\kappa_\Lambda\log(\norm{g(x_0)}/\varepsilon))$.

The classical benchmarks and the spiral-saddle example show that the same fixed-step descent--ascent scheme applies across very different curvature geometries whenever uniform nondegeneracy holds. The underlying curvature gap may arise from one-sided strong convexity, from strongly convex--strongly concave structure with a prescribed minimizing--maximizing splitting, or from full-rank constraint coupling, while the curvature orientation may even rotate indefinitely near the stationary point. These cases point to a common message: for baseline global first-order solvability, the essential information is the uniform separation of curvature from zero rather than the particular mechanism that produces the gap or prescribes its orientation.

These benchmarks also suggest a natural direction for future research: acceleration toward a square-root dependence on $\kappa_\Lambda$. In the strongly convex limit, $\kappa_\Lambda\to\kappa$, so a dependence on $\sqrt{\kappa_\Lambda}$ would recover the classical accelerated scale $O(\sqrt{\kappa}\log(1/\varepsilon))$~\cite{nesterov2018lectures}. In the SCSC setting, $\kappa_\Lambda=\kappa^2$, and the same improvement would give the $O(\kappa\log(1/\varepsilon))$ scale attained by extragradient methods~\cite{mokhtari2020unified}. For linearly constrained strongly convex optimization, our parameter choice gives $\kappa_\Lambda=O(\kappa_f\kappa_A^2)$, so the corresponding square-root dependence becomes $O(\sqrt{\kappa_f}\,\kappa_A\log(1/\varepsilon))$, consistent with the accelerated primal--dual scale for this problem class~\cite{salim2022optimal}.
These correspondences suggest that the linear dependence on $\kappa_\Lambda$ in the present complexity bound may not be intrinsic to uniform nondegeneracy. The resulting question is whether uniform nondegeneracy alone admits a globally convergent first-order method with complexity
\begin{equation*}
O\left(
\sqrt{\kappa_\Lambda}
\log\frac{\norm{g(x_0)}}{\varepsilon}
\right).
\end{equation*}

An affirmative answer would show that acceleration, like baseline global solvability, does not require a prescribed curvature orientation. Conversely, a lower bound separating the general uniformly nondegenerate class from its classical specializations would identify which additional structure acceleration genuinely requires. DAI provides a natural starting point for this question by capturing descent and ascent simultaneously at the first-order level.

\appendix

\section{Uniform Nondegeneracy of the Spiral Example}\label{app:spiral}
In this appendix, we verify that, whenever $0<\alpha<\sqrt{(\sqrt{13}-3)/2}$, the function $E_\alpha$ in \eqref{eq:spiral} belongs to $\classF_{\mu_\pm,L_\pm}(\R^2)$ with $\mu_+=\mu_-=\mu(\alpha)$ and $L_+=L_-=L(\alpha)$ given by
\begin{equation*}
\mu(\alpha):=\frac{1-\sqrt{2\alpha^2+4\alpha^4+\alpha^6}}{1+4\alpha+2\alpha^2}>0, \quad L(\alpha):=1+4\alpha+2\alpha^2.
\end{equation*}

We first show that $E_\alpha\in C^1(\R^2)$, for which the only nontrivial point is the origin.
Since $\abs{E_\alpha}\le r^2/2$, the extension at the origin is differentiable, with $\nabla E_\alpha(0)=0$.

For $r>0$, with $\psi:=2\varphi-2\alpha\log r$ denoting the phase, differentiating $E_\alpha$ in the orthonormal polar frame $e_r=(\cos\varphi,\sin\varphi)$, $e_\varphi=(-\sin\varphi,\cos\varphi)$ gives
\begin{equation*}
\nabla E_\alpha=r(\sin\psi-\alpha\cos\psi)e_r+r\cos\psi\,e_\varphi,
\end{equation*}
which tends to zero as $r\downarrow0$, and thus $E_\alpha\in C^1(\R^2)$.

It remains to establish the uniform secant bounds \eqref{eq:secants}. Consider the smooth regularization $E_{\alpha,\delta}\in C^\infty(\R^2)$ with parameter $\delta>0$ defined by
\begin{equation*}
E_{\alpha,\delta}(r\cos\varphi,r\sin\varphi)=\frac12r^2\sin\psi_\delta,\qquad
\psi_\delta:=2\varphi-\alpha\log(r^2+\delta^2).
\end{equation*}
With $\beta:=r^2/(r^2+\delta^2)\in(0,1)$, $\fc:=\cos\psi_\delta$, and $\fs:=\sin\psi_\delta$, we have
\begin{equation}\label{eq:spiral-grad-delta}
\nabla E_{\alpha,\delta}=r(\fs-\alpha\beta \fc)e_r+r\fc e_\varphi.
\end{equation}
Thus $E_{\alpha,\delta}$ converges to $E_\alpha$ in $C^1_{\text{loc}}(\R^2)$ as $\delta\downarrow0$, and hence it suffices to establish $E_{\alpha,\delta}\in\classF_{\mu(\alpha),L(\alpha)}(\R^2)$ for all $\delta>0$.

Since $E_{\alpha,\delta}$ is smooth, this is equivalent to the uniform spectral bounds \eqref{eq:spectrum} on its Hessian. Differentiating \eqref{eq:spiral-grad-delta} in the polar frame gives
\begin{equation}\label{eq:spiral-hessian}
H_{\alpha,\delta}=\begin{pmatrix}
(1-2\alpha^2\beta^2)\fs-\alpha\beta(5-2\beta)\fc&\fc+2\alpha\beta \fs\\
\fc+2\alpha\beta \fs&-\fs-\alpha\beta \fc
\end{pmatrix},
\end{equation}
or equivalently,
\begin{equation*}
\begin{aligned}
H_{\alpha,\delta}={}&\begin{pmatrix}\fs&\fc\\\fc&-\fs\end{pmatrix}-\alpha\beta(3-\beta)\fc\Id\\
&+\alpha\beta\begin{pmatrix}-(2-\beta)\fc&2\fs\\2\fs&(2-\beta)\fc\end{pmatrix}-2\alpha^2\beta^2\fs\begin{pmatrix}1&0\\0&0\end{pmatrix}.
\end{aligned}
\end{equation*}
Since $\fc^2+\fs^2=1$ and $0<\beta<1$, the triangle inequality gives
\begin{equation*}
\norm{H_{\alpha,\delta}}\le1+\alpha\beta(3-\beta)\abs{\fc}+\alpha\beta\sqrt{(2-\beta)^2\fc^2+4\fs^2}+2\alpha^2\beta^2\abs{\fs}\le1+4\alpha+2\alpha^2,
\end{equation*}
which proves the outer spectral bound $L(\alpha)$.

Consequently, for the inner bound, it suffices to show that $-\det H_{\alpha,\delta}\ge\mu(\alpha)L(\alpha)$. Expanding the determinant in \eqref{eq:spiral-hessian} gives
\begin{equation}\label{eq:spiral-det}
-\det H_{\alpha,\delta}=\begin{pmatrix}\fc&\fs\end{pmatrix}K_{\alpha,\beta}\begin{pmatrix}\fc\\\fs\end{pmatrix},\;
K_{\alpha,\beta}:=\begin{pmatrix}
1-\alpha^2\beta^2(5-2\beta)&\alpha\beta^2(1-\alpha^2\beta)\\
\alpha\beta^2(1-\alpha^2\beta)&1+2\alpha^2\beta^2
\end{pmatrix}.
\end{equation}

We first show that $K_{\alpha,\beta}$ is positive definite.
Direct computation implies that $\operatorname{tr}K_{\alpha,\beta}=2-\alpha^2\beta^2(3-2\beta)\in(0,2)$ and
\begin{equation*}
\det K_{\alpha,\beta}=1-\beta^2(3-2\beta+\beta^2)\cdot\alpha^2-2\beta^4(5-3\beta)\cdot\alpha^4-\beta^6\cdot\alpha^6.
\end{equation*}
Since $\beta\in(0,1)$, all coefficients of the powers of $\alpha$ are nonincreasing in $\beta$, so
\begin{equation*}
\det K_{\alpha,\beta}\ge1-2\alpha^2-4\alpha^4-\alpha^6=(1+\alpha^2)(1-3\alpha^2-\alpha^4)>0,
\end{equation*}
and thus $K_{\alpha,\beta}$ is positive definite.
Writing $0<\lambda_1\le\lambda_2$ for its eigenvalues, we obtain from the trace and determinant estimates
\begin{equation*}
1-2\alpha^2-4\alpha^4-\alpha^6\le\det K_{\alpha,\beta}=\lambda_1\lambda_2\le\lambda_1(2-\lambda_1)=1-(1-\lambda_1)^2,
\end{equation*}
which, together with \eqref{eq:spiral-det}, gives
\begin{equation*}
-\det H_{\alpha,\delta}\ge\lambda_1\ge1-\sqrt{2\alpha^2+4\alpha^4+\alpha^6}=\mu(\alpha)L(\alpha),
\end{equation*}
concluding the proof that $E_{\alpha,\delta}\in\classF_{\mu(\alpha),L(\alpha)}(\R^2)$ for all $\delta>0$. Taking the limit $\delta\downarrow0$ then gives $E_\alpha\in\classF_{\mu(\alpha),L(\alpha)}(\R^2)$, as desired.

\section{Spectral Bounds for Saddle-Point Matrices}\label{app:kkt}
In this appendix, we verify the spectral bounds in \Cref{prop:kkt} by comparing $\mathcal{K}(H)$ with the matrices obtained from the two scalar endpoints. Since $\mu\Id\preceq H\preceq L\Id$,
\begin{equation}\label{eq:kkt-order}
\mathcal{K}(\mu\Id)\preceq\mathcal{K}(H)\preceq\mathcal{K}(L\Id).
\end{equation}

It therefore suffices to compute the spectra of $\mathcal{K}(\lambda\Id)$ for $\lambda\in\{\mu,L\}$ and compare the ordered eigenvalues.
Take a full singular value decomposition $A=U\begin{pmatrix}\Sigma&0\end{pmatrix}V^\top$ with $\Sigma=\diag\bigl(\sigma_1(A),\ldots,\sigma_m(A)\bigr)$, where $U\in\R^{m\times m}$ and $V\in\R^{n\times n}$ are orthogonal. In these coordinates,
\begin{equation}\label{eq:kkt-svd}
\begin{pmatrix}V^\top&0\\0&U^\top\end{pmatrix}
\mathcal{K}(\lambda\Id)
\begin{pmatrix}V&0\\0&U\end{pmatrix}
=
\begin{pmatrix}
\lambda\Id_m&0&\gamma\Sigma\\
0&\lambda\Id_{n-m}&0\\
\gamma\Sigma&0&0
\end{pmatrix}.
\end{equation}
Pairing each singular direction with its corresponding multiplier coordinate in \eqref{eq:kkt-svd} gives the $m$ blocks
\begin{equation*}
\begin{pmatrix}\lambda&\gamma\sigma_i(A)\\\gamma\sigma_i(A)&0\end{pmatrix},\qquad i=1,\ldots,m,
\end{equation*}
while the remaining $n-m$ directions contribute the eigenvalue $\lambda$. Each coupled block has characteristic equation $r^2-\lambda r-\gamma^2\sigma_i(A)^2=0$, and hence eigenvalues
\begin{equation}\label{eq:kkt-roots}
\frac{\lambda-\sqrt{\lambda^2+4\gamma^2\sigma_i(A)^2}}{2}<0,\qquad
\frac{\lambda+\sqrt{\lambda^2+4\gamma^2\sigma_i(A)^2}}{2}>0.
\end{equation}
Thus $\mathcal{K}(\lambda\Id)$ has exactly $m$ negative and $n$ positive eigenvalues.

Writing $\lambda_j(M)$ for the $j$th smallest eigenvalue of a symmetric matrix $M$, the min--max principle~\cite{horn2013matrix} applied to \eqref{eq:kkt-order} gives $\lambda_j\bigl(\mathcal{K}(\mu\Id)\bigr)
\le\lambda_j\bigl(\mathcal{K}(H)\bigr)
\le\lambda_j\bigl(\mathcal{K}(L\Id)\bigr)$ for $j=1,\ldots,n+m$.

Since the negative root in \eqref{eq:kkt-roots} decreases with the singular value, whereas the positive root increases, the endpoints in \eqref{eq:kkt-endpoints} satisfy
\begin{equation*}
\begin{aligned}
\lambda_1\bigl(\mathcal{K}(\mu\Id)\bigr)
&=\frac{\mu-\sqrt{\mu^2+4\gamma^2\sigma_{\max}^2}}{2}=-L_-,\\
\lambda_m\bigl(\mathcal{K}(L\Id)\bigr)
&=\frac{L-\sqrt{L^2+4\gamma^2\sigma_{\min}^2}}{2}=-\mu_-,\\
\lambda_{m+1}\bigl(\mathcal{K}(\mu\Id)\bigr)&\ge\mu=\mu_+,\\
\lambda_{n+m}\bigl(\mathcal{K}(L\Id)\bigr)
&=\frac{L+\sqrt{L^2+4\gamma^2\sigma_{\max}^2}}{2}=L_+,
\end{aligned}
\end{equation*}
which places the first $m$ eigenvalues of $\mathcal{K}(H)$ in $[-L_-,-\mu_-]$ and the remaining $n$ in $[\mu_+,L_+]$, proving $\spec\mathcal{K}(H)\subseteq[-L_-,-\mu_-]\cup[\mu_+,L_+]$.

\section*{Acknowledgments}
The authors used OpenAI Codex (GPT-6 Astra) during the research and preparation of this manuscript. All mathematical statements, derivations, proofs, algorithms, and numerical results included in the manuscript were independently examined and validated by the authors. The authors take full responsibility for all content.

\bibliographystyle{siamplain}
\bibliography{references}

\end{document}